\renewcommand\subsection{\@startsection{subsection}{2}%
  \z@{-.5\linespacing\@plus-.7\linespacing}{.3\linespacing}%
  {\normalfont\bfseries}}
\def\1{\raisebox{2pt}{\rm{$\chi$}}}
\newtheorem{theorem}{Theorem}[section]
\newtheorem{corollary}[theorem]{Corollary}
\newtheorem{lemma}[theorem]{Lemma}
\newtheorem{proposition}[theorem]{Proposition}
\theoremstyle{definition}
\newtheorem{definition}[theorem]{Definition}
\newtheorem{remark}[theorem]{Remark}
\newtheorem{example}[theorem]{Example}
\newcommand{\eps}{{\varepsilon}}
\newcommand{\R}{{\mathbb R}}
\newcommand{\B}{{\mathbb B}}
\newcommand{\N}{{\mathbb N}}
\newcommand\diam{\operatorname{diam}}
\DeclareMathOperator*{\esssup}{ess\, sup}
\def\1{\raisebox{2pt}{\rm{$\chi$}}}
\newcommand{\Lip}{\operatorname{Lip}}
\def\vint_#1{\mathchoice%
        {\mathop{\kern 0.2em\vrule width 0.6em height 0.69678ex depth -0.58065ex
                \kern -0.8em \intop}\nolimits_{\kern -0.4em#1}}%
        {\mathop{\kern 0.1em\vrule width 0.5em height 0.69678ex depth -0.60387ex
                \kern -0.6em \intop}\nolimits_{#1}}%
        {\mathop{\kern 0.1em\vrule width 0.5em height 0.69678ex
            depth -0.60387ex
                \kern -0.6em \intop}\nolimits_{#1}}%
        {\mathop{\kern 0.1em\vrule width 0.5em height 0.69678ex depth -0.60387ex
                \kern -0.6em \intop}\nolimits_{#1}}}
\def\vintslides_#1{\mathchoice%
        {\mathop{\kern 0.1em\vrule width 0.5em height 0.697ex depth -0.581ex
                \kern -0.6em \intop}\nolimits_{\kern -0.4em#1}}%
        {\mathop{\kern 0.1em\vrule width 0.3em height 0.697ex depth -0.604ex
                \kern -0.4em \intop}\nolimits_{#1}}%
        {\mathop{\kern 0.1em\vrule width 0.3em height 0.697ex depth -0.604ex
                \kern -0.4em \intop}\nolimits_{#1}}%
        {\mathop{\kern 0.1em\vrule width 0.3em height 0.697ex depth -0.604ex
                \kern -0.4em \intop}\nolimits_{#1}}}
\newcommand{\aveint}[2]{\mathchoice%
        {\mathop{\kern 0.2em\vrule width 0.6em height 0.69678ex depth -0.58065ex
                \kern -0.8em \intop}\nolimits_{\kern -0.45em#1}^{#2}}%
        {\mathop{\kern 0.1em\vrule width 0.5em height 0.69678ex depth -0.60387ex
                \kern -0.6em \intop}\nolimits_{#1}^{#2}}%
        {\mathop{\kern 0.1em\vrule width 0.5em height 0.69678ex depth -0.60387ex
                \kern -0.6em \intop}\nolimits_{#1}^{#2}}%
        {\mathop{\kern 0.1em\vrule width 0.5em height 0.69678ex depth -0.60387ex
                \kern -0.6em \intop}\nolimits_{#1}^{#2}}}
\newcommand{\dist}{\operatorname{dist}}
\title[A maximal function approach to Poincar\'e inequalities]{A maximal function approach to two-measure Poincar\'e inequalities} 
\author[J.\! Kinnunen]{Juha Kinnunen}   
\address[J.K.]{Department of Mathematics, Aalto University, P.O. Box 11100, FI-00076 Aalto University, Finland}
\email{juha.k.kinnunen@aalto.fi}
\author[R.\! Korte]{Riikka Korte}   
\address[R.K.]{Department of Mathematics, Aalto University, P.O. Box 11100, FI-00076 Aalto University, Finland}
\email{riikka.korte@aalto.fi}
\author[J.\! Lehrb\"ack]{Juha Lehrb\"ack}   
\address[J.L.]{University of Jyvaskyla, Department of Mathematics and Statistics, P.O. Box 35, FI-40014 University of Jyvaskyla, Finland}
\email{juha.lehrback@jyu.fi}
\author[A.V.\! V\"ah\"akangas]{Antti V. V\"ah\"akangas}
\address[A.V.V.]{University of Jyvaskyla, Department of Mathematics and Statistics, P.O. Box 35, FI-40014 University of Jyvaskyla, Finland} 
\email{antti.vahakangas@iki.fi}
\keywords{Poincar\'e inequality, Self-improvement, Geodesic two-measure space}
\subjclass[2010]{31E05, 35A23, 46E35}
\thanks{This research was supported by the Academy of Finland.}
\begin{document}

\begin{abstract}
This paper extends the self-improvement result of Keith and Zhong in~\cite{MR2415381} to the two-measure case.
Our main result shows that a two-measure $(p,p)$-Poincar\'e inequality for $1<p<\infty$ improves to a $(p,p-\eps)$-Poincar\'e inequality for some $\eps>0$ under a balance condition on the measures. 
The corresponding result for a maximal Poincar\'e inequality is also considered. 
In this case the left-hand side in the Poincar\'e inequality is replaced with an integral of a  sharp maximal function and  the results hold without  a balance condition.
Moreover,  validity of maximal Poincar\'e inequalities is used to characterize the self-improvement of two-measure Poincar\'e inequalities.
Examples are constructed to illustrate the role of the assumptions.
Harmonic analysis and PDE  techniques are used extensively in the arguments.
\end{abstract}

\maketitle

\section{Introduction}

Let $X=(X,d,\nu,\mu)$ be a metric space equipped with two Borel measures $\mu$ and $\nu$, and
let $1\le q,p<\infty$. In this work we are interested in properties of the
\emph{two-measure $(q,p)$-Poincar\'e inequalities}
\begin{equation}\label{eq.two_meas_intro}
\biggl(\vint_{B} \lvert u(x)-u_{B;\nu}\rvert^q\,d\nu(x)\biggr)^{1/q}
\le C\diam(B)\biggl(\vint_{B} g(x)^p \,d\mu(x)\biggr)^{1/p}\,.
\end{equation}
We say that the space $X$ supports a two-measure $(q,p)$-Poincar\'e inequality,
if there is a constant $C>0$ such that inequality~\eqref{eq.two_meas_intro} holds for all 
balls $B$ in $X$ whenever $u$ is a Lipschitz function in $X$ and $g$ is 
 a $p$-weak  upper gradient of $u$;
see  Sections~\ref{s.prelim} and~\ref{s.two_measure_PI}  for the relevant definitions.

An interesting feature of these inequalities is that they are often self-improving: 
a $(q,p)$-Poincar\'e inequality implies a similar inequality for other values of the parameters $p$ and $q$.
By H\"older's inequality, we can  increase $p$ and decrease $q$. Thus the actual
self-improvement concerns the opposite directions. Next we recall some of the known results.

In the one-measure case $\mu=\nu$, Haj\l asz and Koskela showed in~\cite{MR1336257} that if $\mu$
is doubling and $X$ supports a $(1,p)$-Poincar\'e inequality, then there exists $q_0>p$
such that $X$ supports a $(q,p)$-Poincar\'e inequality for every $1\le q\le q_0$. The other direction,
which is more delicate, was settled by Keith and Zhong in~\cite{MR2415381}, where they proved that 
if $X$ is complete, $\mu$ is doubling, and $X$ supports a $(1,p)$-Poincar\'e inequality for some $1<p<\infty$, 
then there exists $\eps_0>0$
such that $X$ supports a $(1,p-\eps)$-Poincar\'e inequality for $0<\eps\le\eps_0$. 
In the scale of Lipschitz functions, the proof of~\cite{MR2415381} also works
in non-complete geodesic spaces.  Recently, new proofs and extensions for  the Keith--Zhong result  
have been given in~\cite{DeJarnette,SEB,KinnunenLehrbackVahakangasZhong}.
In many respects this paper is a continuation of the work initiated in~\cite{KinnunenLehrbackVahakangasZhong}. 

In the two-measure case, the improvement on the left-hand side of~\eqref{eq.two_meas_intro} 
follows from the results that have been 
established in various settings in a series of papers
by Franchi, MacManus, P\'erez, and Wheeden~\cite{MR1609261,MR1604816,MR2027891}.
 These works also discuss the question how to obtain weighted Poincar\'e inequalities from non-weighted 
inequalities. 
A particular consequence of the results in~\cite{MR1604816} is that if
$\mu$ and $\nu$ are doubling measures and $X$ supports a two-measure $(1,p)$-Poincar\'e inequality,
and moreover for some $q \ge p$ the measures $\mu$ and $\nu$ satisfy the \emph{balance condition} 
\begin{equation}\label{eq.balance_intro}
\frac{\diam(B')}{\diam(B)} \biggl(\frac{\nu(B')}{\nu(B)}\biggr)^{1/q}\le
C\biggl(\frac{\mu(B')}{\mu(B)}\biggr)^{1/p}
\end{equation}
whenever the balls $B$ and $B'=B(x',r')$ are such that $x'\in B$ and $0<r'\le \diam(B)$,
then $X$ supports also a two-measure $(q,p)$-Poincar\'e inequality, but possibly with
slightly larger balls on the right-hand side of~\eqref{eq.two_meas_intro}. 
The above balance condition was introduced and applied by Chanillo and Wheeden~\cite{ChanilloWheeden1985}
in connection with two-weight Poincar\'e and Sobolev inequalities in the Euclidean space $\R^n$.
Subsequently, this  condition  has appeared, for instance, in~\cite{ChanilloWheeden1985CPDE, FranchiLuWheeden1995, MR1189903, MR1816566}.

Our purpose in this paper is to study the 
self-improvement with respect to the right-hand side of~\eqref{eq.two_meas_intro} 
in a geodesic metric space $X$ equipped with two measures $\mu$ and $\nu$.
More precisely, we start with a two-measure $(p,p)$-Poincar\'e inequality for $1<p<\infty$, 
and improve this into a $(p,p-\eps)$-Poincar\'e inequality for some $\eps>0$, under certain additional conditions. 
To some extent, results in this direction could be obtained
by combining the Keith--Zhong result on one-measure inequalities with the abstract weighted
machinery  in~\cite{MR1609261,MR1604816,MR2027891}, but such a combination of two
extensive theories easily distracts from the essential mechanisms behind the self-improvement,
and it is also difficult to analyze dependencies of the relevant parameters.
We propose a direct approach, 
where we use the assumed $(p,p)$-Poincar\'e inequality only once,
and therefore our proof can be, for instance, used to track down a reasonable estimate for the constant
in the resulting $(p,p-\eps)$-Poincar\'e inequality.
Moreover, the direct examination of the two-measure setting
reveals several interesting new phenomena that are not  clearly visible in the one-measure case. 

The first new feature is that the balance condition~\eqref{eq.balance_intro}, which is necessary for the validity of
the two-measure $(q,p)$-Poincar\'e inequality~\eqref{eq.two_meas_intro} by Lemma~\ref{l.Poincare_implies_balance},
does not self-improve; see Example~\ref{ex.bal_no_impro}.  
This poses an additional restriction for the self-improvement of two-measure Poincar\'e inequalities, and
 in Example~\ref{ex.no_impro} we describe a situation where our other assumptions are satisfied, 
but a two-measure $(p,p)$-Poincar\'e inequality does not improve to a $(p,p-\eps)$-Poincar\'e inequality for any $\eps>0$,
since the $(p,p-\eps)$-balance condition does not hold for any $\eps>0$. To obtain a better two-measure Poincar\'e inequality
we thus need to assume an \emph{a priori} stronger balance condition. As it turns out,  a slight improvement given by 
a suitable \emph{bumbed balance condition}, introduced by Lerner and P\'erez in~\cite{MR2375698}
for the Muckenhoupt weights, is sufficient for the improvement of
two-measure Poincar\'e inequalities; see Definition~\ref{d.bumped_balance_def}, Theorem~\ref{t.bump_impro}, 
and Theorem~\ref{t.main_thm}.
In fact, many of our results have  counterparts
for  Muckenhoupt weights; see 
Lerner and P\'erez~\cite{MR2375698}.

Another new feature in our approach is the  introduction of the so-called \emph{maximal Poincar\'e inequalities}, in which the left-hand side 
of~\eqref{eq.two_meas_intro} is replaced with an integral of a sharp maximal function.
In the one-measure case,
the corresponding maximal Poincar\'e inequalities are essentially equivalent to 
the usual Poincar\'e inequalities; in fact, the maximal Poincar\'e inequalities
have been used as a tool in the proofs of the self-improvement results, for instance,
in~\cite{MR2415381,KinnunenLehrbackVahakangasZhong}. 
However, there is a difference  between the usual and maximal Poincar\'e inequalities in the two-measure case.
More precisely, the maximal Poincar\'e inequalities often enjoy certain self-improvement  independent of
the balance conditions; see Theorems~\ref{t.global_epsilon_0} and~\ref{t.global_improvement} for details. 
This shows that the maximal Poincar\'e inequalities are strictly stronger than the usual two-measure Poincar\'e inequalities;
cf.\ Example~\ref{ex.sharp_explanation}. 
Moreover, the validity of maximal Poincar\'e inequalities can be used to characterize
the self-improvement of two-measure Poincar\'e inequalities; see Corollary~\ref{c.self_impro}
and Theorem~\ref{t.global_epsilon_0}.

The outline of the paper is as follows.
In Section~\ref{s.prelim} we  recall preliminaries related to (geodesic)
metric two-measure spaces and Muckenhoupt weights. In Section~\ref{s.balance} we introduce
the $(q,p)$-balance condition and the bumped version of the $(p,p)$-balance condition.
We also establish some basic relations between the balance conditions for different values of
the parameters $q$ and $p$ (Proposition~\ref{p.basic_balance}) and show that, under mild conditions, 
the bumped $(p,p)$-balance condition
is equivalent to a $(p-\eps,p-\eps)$-balance condition for some $\eps>0$ (Theorem~\ref{t.bump_impro}).
This section is concluded with an example showing that the balance conditions do not always self-improve. 

Usual two-measure Poincar\'e inequalities are introduced in Section~\ref{s.two_measure_PI}, where
we also prove the necessity of the $(q,p)$-balance condition for the two-measure $(q,p)$-Poincar\'e inequality.
In Section~\ref{s.max_PI} we define both the sharp maximal functions related to families of balls and the
associated maximal Poincar\'e inequalities, and we also study the relation between usual and maximal
Poincar\'e inequalities (Lemma~\ref{l.straightforward}). Section~\ref{s.mainR} then contains the statements of
our main results: first Theorem~\ref{t.main_thm} and Corollary~\ref{c.self_impro} concerning the self-improvement
of the two-measure $(p,p)$-Poincar\'e inequality, and then Theorem~\ref{t.global_epsilon_0}, which shows that 
maximal Poincar\'e inequalities with respect to a certain global maximal function always self-improve.
The latter theorem also creates a link between the self-improvement of usual and maximal
Poincar\'e inequalities. Besides the assumptions that $X$ is geodesic and the measures
$\mu$ and $\nu$ satisfy relevant balance conditions, in the main results of Section~\ref{s.mainR} we assume
that $\nu$ is an $A_\infty(\mu)$ weighted measure (Definition~\ref{d.comparability})
and the space $X$ satisfies an independence property for the upper gradients (Definition~\ref{d.independence}).

The outlines of the proofs of our main results are given  in Section~\ref{s.mainR},
but these proofs rely on technical tools that are postponed to the final sections of the  paper. First, in Section~\ref{s.main},
we establish Theorem~\ref{t.main_local}, which plays an important role in the proof of Theorem~\ref{t.main_thm}.
This part is based on two-measure adaptations of the ideas from~\cite{KinnunenLehrbackVahakangasZhong}, but due to the
subtle modifications that are needed we present most of the details. Finally, in Sections~\ref{s.Whitney_ext} and~\ref{s.global} 
we conclude the proof of Theorem~\ref{t.global_epsilon_0}. The main result needed for
Theorem~\ref{t.global_epsilon_0} is Theorem~\ref{t.global_improvement}. In the proof of the latter theorem we
need a somewhat curious Lipschitz extension, which does not \emph{decrease} the global maximal function too much. 
Such an extension is constructed in Section~\ref{s.Whitney_ext}. The idea is to first take the usual Whitney
extention, and then modify  it by adding suitable bumb functions which guarantee that the sharp maximal function
of the modified extension is large enough. Theorem~\ref{t.global_improvement}, which actually contains a stronger
version of the most important implication in Theorem~\ref{t.global_epsilon_0}, is then stated and proved
in the final Section~\ref{s.global}.

\begin{remark}[Tracking constants]
The letter $C$ is used to denote 
positive constants, whose dependencies can vary and whose value
can change from one occurrence to another. Some of our 
self-improvement  results are based on 
quantitative estimates and absorption arguments, where
it is often crucial to track the dependencies of constants more carefully.
For this purpose, we will use the following notational convention:
$C({\ast,\dotsb,\ast})$ denotes a positive constant which 
depends at most on the parameters indicated by the $\ast$'s but whose actual
value can change from one occurrence to another, even within a single line.
\end{remark}


\section{Preliminaries}\label{s.prelim}

\subsection{Metric two-measure spaces}\label{s.metric}

We assume that 
$X=(X,d,\nu,\mu)$ is a {\em metric two-measure space} 
equipped with a metric $d$ and 
{\em two} positive complete Borel
measures $\nu$ and $\mu$, and satisfying $\# X\ge 2$. 
We also assume  throughout this paper  that  
\begin{equation}\label{e.finite_on_balls}
0<\nu(B)<\infty\quad \text{ and } \quad 0<\mu(B)<\infty
\end{equation}
for all (open) balls 
\[B=B(x,r)=\{y\in X\,:\, d(y,x)<r\}\subset X\] 
with $x\in X$ and $r>0$, 
and that the measures $\nu$ and $\mu$ are {\em doubling}, that is,
there are constants $c_\nu,c_\mu> 1$ such that
\begin{equation}\label{e.doubling}
\nu(2B) \le c_\nu\, \nu(B)\quad\text{ and }\quad \mu(2B) \le c_\mu\, \mu(B)
\end{equation}
for all balls $B=B(x,r)$ in $X$. 
Here we use for $0<t<\infty$ the notation $tB=B(x,tr)$. 
We remark that $X$ is separable
under these assumptions, see \cite[Proposition 1.6]{MR2867756}.

Iteration of the doubling condition \eqref{e.doubling} for the measure $\nu$  shows  that
\begin{equation}\label{e.radius_measure}
\frac{\nu(B')}{\nu(B)}\ge 2^{-s}\biggl(\frac{r'}{\diam(B)}\biggr)^s\,,\qquad s=\log_2 c_\nu>0\,,
\end{equation}
whenever $B$ and  $B'=B(x',r')$ are balls
in $X$ such that $x'\in B$ and $r'\le \diam(B)${;} see 
for instance~\cite[p.~31]{MR1800917}
and~\cite[Lemma 3.3]{MR2867756}. 
The corresponding estimate holds for $\mu$, as well.

When $A\subset X$, we let $\mathbf{1}_{A}$
denote  the characteristic function of $A$; that is, $\mathbf{1}_{A}(x)=1$ if $x\in A$
and $\mathbf{1}_{A}(x)=0$ if $x\in X\setminus A$. 
We use the notation 
\[
u_{A;\nu}=\vint_{A} u(y)\,d\nu(y)=\frac{1}{\nu(A)}\int_A u(y)\,d\nu(y)
\]
for the integral average of $u\in L^1(A;d\nu)$ in a Borel set $A\subset X$
with $0<\nu(A)<\infty$. 
If $1\le p<\infty$ and $u\colon X\to \R$ is a $\mu$-measurable function, then $u\in L^p_{\textup{loc}}(X;d\mu)$ 
means that for each $x\in X$ there exists $r_x>0$ such that $u\in L^p(B(x,r_x);d\mu)$, 
i.e., $\int_{B(x,r_x)} \lvert u(y)\rvert^p\,d\mu(y)<\infty$.

\subsection{Geodesic two-measure spaces}\label{s.geodesic}
Let $X=(X,d,\nu,\mu)$ be a metric two-measure space, satisfying the assumptions in Section~\ref{s.metric}.
By a {\em curve} we mean a nonconstant, rectifiable, and continuous
mapping from a compact interval of $\R$ to $X$; we tacitly assume
that all curves are parametrized by their arc-length.
We say that $X$ is a {\em geodesic two-measure space}, if 
any two distinct points in $X$
can be joined by a curve whose length is equal to the distance between the two points. 

A geodesic two-measure space $X$  is connected, and therefore it 
holds for all balls $B$ in $X$ that 
\begin{equation}\label{e.diams}
0<\diam(2B)\le 4\diam(B)\,.
\end{equation}
Moreover, by the connectedness, there are constants $C>0$ and $\sigma>0$ such that
\begin{equation}\label{e.rev_d}
\frac{\nu(B')}{\nu(B)}\le C\biggl(\frac{r'}{\diam(B)}\biggr)^{\sigma}
\end{equation}
whenever $B$ and $B'=B(x',r')$ are balls in $X$ such that $x'\in B$ and $r'\le \diam(B)$.
Again, a corresponding inequality holds for the measure $\mu$ as well.
For the proof of inequality \eqref{e.rev_d} we refer to \cite[Corollary~3.8]{MR2867756}.

The following lemma is \cite[Lemma 12.1.2]{MR3363168}.

\begin{lemma}\label{l.continuous}
Assume that $X$ is a geodesic two-measure space and that $A\subset X$ is a $\nu$-measurable set. 
Then the function
\[
r\mapsto \frac{\nu(B(x,r)\cap A)}{\nu(B(x,r))}\,:\, (0,\infty)\to \R
\]
is continuous  for all 
$x\in X$.
\end{lemma}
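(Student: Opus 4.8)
The plan is to prove continuity of the function
\[
\Phi(r) = \frac{\nu(B(x,r)\cap A)}{\nu(B(x,r))}
\]
by establishing that in a geodesic space the denominator $r\mapsto\nu(B(x,r))$ is continuous, and that the numerator $r\mapsto\nu(B(x,r)\cap A)$ is continuous as well; the quotient is then continuous wherever the denominator is nonzero, which holds for all $r>0$ by~\eqref{e.finite_on_balls}. So the heart of the matter reduces to a single claim: in a geodesic two-measure space, for every $x\in X$ and every $\nu$-measurable set $E\subset X$ (we will apply it with $E=X$ and with $E=A$), the function $r\mapsto \nu(B(x,r)\cap E)$ is continuous on $(0,\infty)$.

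First I would treat \emph{left-continuity}, which is essentially automatic and uses only that $\nu$ is a finite Borel measure on balls. Indeed, if $r_j\uparrow r$, then $B(x,r_j)\cap E \uparrow B(x,r)\cap E$, so by monotone convergence (continuity from below of the measure) $\nu(B(x,r_j)\cap E)\to\nu(B(x,r)\cap E)$. Note this step does not even require the geodesic hypothesis.

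The main obstacle is \emph{right-continuity}, and this is exactly where geodesicity enters. If $r_j\downarrow r$, then $B(x,r_j)\cap E \downarrow \overline{B}(x,r)\cap E$ (the decreasing intersection of open balls of radii shrinking to $r$ is the \emph{closed} ball of radius $r$), so continuity from above gives $\nu(B(x,r_j)\cap E)\to\nu(\overline{B}(x,r)\cap E)$. Hence the desired right-continuity is equivalent to the statement that the sphere $S(x,r)=\overline{B}(x,r)\setminus B(x,r)$ is $\nu$-null for every $x$ and every $r>0$. To see this in a geodesic space, I would argue that the concentric spheres $\{S(x,r)\}_{r>0}$ are pairwise disjoint and, by the geodesic property, \emph{every} sphere $S(x,r)$ with $0<r<\sup_{y\in X} d(x,y)$ is nonempty (any point at distance exactly $r$ is reached along a geodesic from a farther point, or from $x$ itself if the diameter is large enough); more to the point, I want to bound $\nu(S(x,r))$. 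Here the clean way is the reverse-doubling type estimate~\eqref{e.rev_d}: for $0<\rho<r$ (with $r\le\diam(B(x,r))$, which holds in a geodesic space by connectedness since such a ball has diameter comparable to $r$), the annulus $B(x,r)\setminus B(x,\rho)$ has $\nu$-measure at least $\nu(B(x,r))\bigl(1-C(\rho/\diam(B(x,r)))^\sigma\bigr)$... rather, the usable consequence is that $\nu(B(x,\rho))\le C(\rho/r)^{\sigma}\,\nu(B(x,r))\to 0$ as $\rho\to 0$ is the wrong direction; instead I use that $S(x,r)\subset \overline B(x,r)\setminus B(x,\rho)$ for every $\rho<r$, so $\nu(S(x,r))\le \nu(\overline B(x,r))-\nu(B(x,\rho))$, and letting $\rho\uparrow r$ together with left-continuity of $\rho\mapsto\nu(B(x,\rho))$ gives $\nu(S(x,r))\le \nu(\overline B(x,r))-\nu(B(x,r))$; but $\nu(\overline B(x,r))=\lim_{\rho\downarrow r}\nu(B(x,\rho))$, and one more application of reverse doubling between the scales $r$ and $2r$ forces the overshoot to vanish. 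The cleanest packaging, which I would adopt, is: since the uncountably many disjoint spheres $S(x,r)$, $r\in(0,R)$, all sit inside the finite-measure ball $B(x,2R)$, only countably many of them can have positive $\nu$-measure; then a scaling/translation-in-radius argument using the geodesic structure (a self-similar family of homothety-like inclusions furnished by~\eqref{e.radius_measure} and~\eqref{e.rev_d}) shows that if one sphere is charged then a whole continuum is, a contradiction. Thus $\nu(S(x,r))=0$ for all $r$, and right-continuity follows.

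Finally, I would assemble the pieces: $r\mapsto\nu(B(x,r)\cap X)=\nu(B(x,r))$ and $r\mapsto\nu(B(x,r)\cap A)$ are both continuous on $(0,\infty)$ by the left- and right-continuity just established, the denominator is strictly positive and finite by~\eqref{e.finite_on_balls}, and hence $\Phi=\bigl(r\mapsto\nu(B(x,r)\cap A)\bigr)/\bigl(r\mapsto\nu(B(x,r))\bigr)$ is continuous on $(0,\infty)$, as claimed. The only genuinely delicate point, as indicated, is ruling out spheres of positive measure; everything else is standard measure-theoretic continuity from above and below. (Alternatively one may simply cite~\cite[Lemma 12.1.2]{MR3363168} verbatim, as the statement indicates, but the argument above makes the role of the geodesic hypothesis transparent.)
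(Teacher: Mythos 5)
Your reduction is the right one and most of it is sound: left-continuity of $r\mapsto\nu(B(x,r)\cap E)$ via continuity from below, right-continuity via continuity from above (legitimate because $\nu(B(x,r_1))<\infty$ by \eqref{e.finite_on_balls}), and the observation that the whole lemma is equivalent to every sphere $S(x,r)=\{y\in X: d(x,y)=r\}$ being $\nu$-null. (The paper itself gives no argument and simply cites \cite[Lemma 12.1.2]{MR3363168}.) The genuine gap is that you never actually prove that spheres are null. Your first attempt concedes mid-sentence that reverse doubling points the wrong way, and the claim that ``one more application of reverse doubling between the scales $r$ and $2r$ forces the overshoot to vanish'' is not an argument: the overshoot $\nu(\overline B(x,r))-\nu(B(x,r))$ \emph{is} $\nu(S(x,r))$, so nothing has been gained. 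Your second attempt correctly notes that only countably many of the pairwise disjoint spheres $S(x,\rho)$, $\rho\in(0,R)$, can carry positive measure, but the step ``if one sphere is charged then a whole continuum is'' is unsupported: a general geodesic two-measure space has no homotheties or radius-translations, and neither \eqref{e.radius_measure} nor \eqref{e.rev_d} transports mass from one sphere to nearby ones. As written, nothing rules out a single charged sphere.

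A correct way to close the gap, using the geodesic hypothesis where it is genuinely needed: suppose $\nu(S(x,r))>0$ for some $r>0$. Since $\nu$ is doubling, the Lebesgue differentiation theorem provides a density point $y\in S(x,r)$, i.e.\ $\nu(B(y,\rho)\cap S(x,r))/\nu(B(y,\rho))\to 1$ as $\rho\to0$. Let $\gamma\colon[0,r]\to X$ be a geodesic from $x$ to $y$ and, for small $\rho>0$, set $z=\gamma(r-\rho/2)$. Then $B(z,\rho/4)\subset B(y,\rho)$ and $B(z,\rho/4)\cap S(x,r)=\emptyset$, since every $w\in B(z,\rho/4)$ satisfies $d(x,w)\le (r-\rho/2)+\rho/4<r$. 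Moreover $B(y,\rho)\subset B(z,2\rho)$, so the doubling property \eqref{e.doubling} gives $\nu(B(z,\rho/4))\ge c_\nu^{-3}\nu(B(y,\rho))$, whence $\nu\bigl(B(y,\rho)\setminus S(x,r)\bigr)\ge c_\nu^{-3}\nu(B(y,\rho))$ for all small $\rho$, contradicting the density-point property. (Equivalently, one may invoke the annular decay property of length spaces carrying doubling measures.) With that step supplied, the rest of your proof goes through.
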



The following lemma, in turn, is \cite[Lemma 2.5]{KinnunenLehrbackVahakangasZhong}.

\begin{lemma}\label{l.ball_measures}
Assume that $B$ and $B'=B(x',r')$ are balls in a geodesic two-measure space $X$ such
that $x'\in B$ and $0<r'\le \diam(B)$. Then
$\nu(B')\le c_\nu^3 \nu(B'\cap B)$.
\end{lemma}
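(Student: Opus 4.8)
The plan is to produce an auxiliary ball $B(y,\rho)\subseteq B\cap B'$ whose radius $\rho$ is a fixed fraction of $r'$ and which is large enough that $B'\subseteq 2^{3}B(y,\rho)$; the conclusion then follows from three applications of the doubling property \eqref{e.doubling} for $\nu$, since
\[
\nu(B')\le\nu\bigl(2^3B(y,\rho)\bigr)\le c_\nu^3\,\nu\bigl(B(y,\rho)\bigr)\le c_\nu^3\,\nu(B\cap B')\,.
\]
I would first dispose of the case $x'=x$, which is immediate: if $r'\le r$ then $B'\subseteq B$ by the triangle inequality, while if $r<r'\le\diam(B)\le 2r$ then $B\cap B'=B$ and $B'\subseteq 2B$, so a single application of \eqref{e.doubling} already suffices.

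For the main case $x'\ne x$, write $d=d(x',x)<r$. Since $X$ is geodesic, I would fix a geodesic $\gamma\colon[0,d]\to X$, parametrized by arc length, with $\gamma(0)=x'$ and $\gamma(d)=x$; then $d(\gamma(s),x')=s$ and $d(\gamma(s),x)=d-s$ for $s\in[0,d]$, and in particular every $\gamma(s)$ lies in $B$, because $d-s\le d<r$. I would then take
\[
s=\min\bigl\{d,\ \tfrac34 r'\bigr\},\qquad y=\gamma(s),\qquad \rho=\tfrac14 r'\,.
\]
The inclusion $B(y,\rho)\subseteq B'$ follows from the triangle inequality and $d(y,x')=s\le\tfrac34 r'$; the inclusion $B(y,\rho)\subseteq B$ follows similarly once one observes, using the definition of $s$ together with $r'\le\diam(B)\le 2r$, that $s\ge\tfrac14 r'-(r-d)$; and $B'\subseteq 2^3 B(y,\rho)$ follows since $d(z,y)\le d(z,x')+d(x',y)<r'+s\le\tfrac74 r'<2r'=2^3\rho$ for every $z\in B'$.

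Feeding these three inclusions into the chain of doubling inequalities above yields $\nu(B')\le c_\nu^3\,\nu(B\cap B')$, which is the claim. The only step that needs genuine care is the inclusion $B(y,\rho)\subseteq B$: the \emph{natural} choice would be the point on $\gamma$ at arc-length distance $\tfrac34 r'$ from $x'$, but the geodesic from $x'$ to the center $x$ may be too short to reach that far, which is exactly why $s$ is truncated at $d$ (so that $y=x$), and the bound $r'\le 2r$ is precisely what keeps $B(y,\rho)$ inside $B$ in that case. Everything else is a routine manipulation of the triangle inequality.
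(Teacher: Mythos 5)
Your proof is correct: the choice $y=\gamma(\min\{d,\tfrac34 r'\})$, $\rho=\tfrac14 r'$ gives $B(y,\rho)\subseteq B\cap B'$ and $B'\subseteq 2^3B(y,\rho)$ (the verification using $r'\le\diam(B)\le 2r$ and $d<r$ checks out), and three applications of doubling yield exactly the constant $c_\nu^3$. The paper itself gives no proof but cites \cite[Lemma 2.5]{KinnunenLehrbackVahakangasZhong}, whose argument is this same standard one of locating a comparable ball inside the intersection along a geodesic from $x'$ toward the center of $B$.
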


\subsection{Muckenhoupt weights and weighted measures}\label{s.Muckenhoupt}

Assume that $X=(X,d,\nu,\mu)$ is a geodesic two-measure space.
Let  $s'>0$ and $\sigma'=\sigma>0$ 
be the exponents as in \eqref{e.radius_measure} and \eqref{e.rev_d} 
for the measures $\mu$ and $\nu$, respectively.
It follows that there is a constant $C>0$ such that
\begin{equation}\label{e.nu_mu_comparison}
\frac{\nu(B')}{\nu(B)}\leq C\left(\frac{r'}{\diam(B)}\right)^{\sigma'}\leq C\left(\frac{\mu(B')}{\mu(B)}\right)^{\sigma'/s'}\,,
\end{equation}
whenever $B$ and $B'=B(x',r')$ are balls in $X$ such that $x'\in B$ and $0<r'\le \diam(B)$. 
In our main results, we also need the following stronger version of this estimate.

\begin{definition}\label{d.comparability}
We say that $\nu$ is an {\em $A_\infty(\mu)$-weighted measure}
if there exist constants $c_{\nu,\mu}>0$ and $\delta>0$ such that
inequality
\[
\frac{\nu(A)}{\nu(B)} \le c_{\nu,\mu} \biggl(\frac{\mu(A)}{\mu(B)}\biggr)^\delta
\]
holds whenever $B\subset X$ is a ball and $A\subset B$ is a Borel set.
\end{definition}

Let us justify the terminology that is used in Definition \ref{d.comparability}.
If $\nu$ is  an $A_\infty(\mu)$-weighted measure,
then  $\nu$ is absolutely continuous with respect to $\mu$ and
both of these measures are $\sigma$-finite
by \eqref{e.finite_on_balls}. By the Radon--Nikodym Theorem, there
exists a non-negative Borel function $w\colon X\to \R$ such that 
\[
\nu(A)=\int_A w(x)\,d\mu(x)
\]
for all Borel sets $A\subset X$, 
 and so $w$ belongs  to the so-called Muckenhoupt class $A_\infty(\mu)$
 in the sense of the following standard Definition \ref{d.a_infty}.

A Borel function $w\colon X\to \R$ satisfying
$w(x)>0$ for $\mu$-almost every $x\in X$ and
$\int_B w\,d\mu<\infty$ for all balls $B\subset X$
is called a {\em weight.}
We write 
$w(A)=\int_A w\,d\mu$ if $A\subset X$ is a Borel set
and $w$ is a weight.

\begin{definition}\label{d.a_infty}
A weight $w$ belongs  to the {\em Muckenhoupt class $A_\infty(\mu)$},  denoted  $w\in A_\infty(\mu)$,
if there are constants $C>0$ and $\delta>0$ such that
\[
\frac{w(A)}{w(B)} \le C \biggl(\frac{\mu(A)}{\mu(B)}\biggr)^\delta 
\]
whenever  $B\subset X$ is a ball and $A\subset B$ is a Borel set.  
\end{definition}

We also need the corresponding classes for exponents $1\le p<\infty$.

\begin{definition}
Let $1\le p<\infty$.
A weight $w$ belongs to the {\em Muckenhoupt class $A_p(\mu)$}, denoted  $w\in A_p(\mu)$, if
there is a constant $A>0$ such that, for every ball $B$ in $X$, 
\begin{equation}\label{a_p}
\biggl( \vint_B w\,d\mu\biggr) \biggl(\vint_B w^{-1/(p-1)}\,d\mu\biggr)^{p-1} \le A\,
\quad\text{ if } p>1\,,
\end{equation}
and
\begin{equation}\label{a_1}
\biggl( \vint_B w\,d\mu\biggr) \esssup_{y\in B} \frac{1}{w(y)} \le A\,
\quad\text{ if } p=1\,.
\end{equation}
\end{definition}

By \cite[Chapter I, Theorem 15]{MR1011673}, it holds for 
every $1<p<q<\infty$ that
\begin{equation}\label{ap_relations}
A_1(\mu)\subset A_p(\mu)\subset A_q(\mu)\subset A_\infty(\mu).
\end{equation}
Furthermore, the equality 
$A_\infty(\mu)=\bigcup_{1\le p<\infty} A_p(\mu)$
is valid under our standing assumptions
since the measure $\mu$ is doubling and
$\mu(B(x,r))$ increases
continuously with $r$ for each $x\in X$.  The 
latter property follows from the assumption that $X$ is  geodesic; we  
refer to \cite[Chapter~I, Theorem~18]{MR1011673}
 and  \cite[Proposition~11.5.3]{MR3363168} for details.

\subsection{Lipschitz functions}

Let $A\subset X$ and $0\le \kappa <\infty$. 
We say that a function
$u\colon A\to \R$  is  {\em $\kappa$-Lipschitz}, if
\[
\lvert u(x)-u(y)\rvert\le \kappa\, d(x,y)\qquad \text{ for all } x,y\in A\,.
\]
If $u\colon A\to \R$ is  $\kappa$-Lipschitz,  then the classical McShane extension
\begin{equation}\label{McShane}
v(x)=\inf \{ u(y) + \kappa \,d(x,y)\,:\,y\in A\}\,,\qquad x\in X\,,
\end{equation}
defines a $\kappa$-Lipschitz function $v\colon X\to \R$, which satisfies
$v|_A = u$; we refer to \cite[pp.~43--44]{MR1800917}.

The set of all Lipschitz functions $u\colon A\to\R$
is denoted by $\Lip(A)$. 
That is, we have $u\in \Lip(A)$ if, and only if, $u\colon A\to \R$ is $\kappa$-Lipschitz for some $0\le \kappa<\infty$.

We also say that a function $u\colon X\to \R$ has  {\em bounded support}, if
the set $\overline{\{x\in X\,:\,u(x)\not=0\}}$ is contained in some ball $B$ in $X$.

\section{Balance conditions}\label{s.balance}

The following balance condition  for measures was introduced in~\cite{ChanilloWheeden1985}. It is  closely  related 
to the  two-measure  Poincar\'e inequalities that are  discussed in
Section~\ref{s.two_measure_PI}.

\begin{definition}\label{d.qp_balance}
Let $1\le q,p<\infty$. 
We say that a metric (or a geodesic) two-measure space
$(X,d,\nu,\mu)$ {\em satisfies a $(q,p)$-balance condition}, if there is a constant $C>0$ such that
\begin{equation}\label{e.qp_balance}
\frac{\diam(B')}{\diam(B)} \biggl(\frac{\nu(B')}{\nu(B)}\biggr)^{1/q}\le
C^{1/p}\biggl(\frac{\mu(B')}{\mu(B)}\biggr)^{1/p} 
\end{equation}
whenever $B$ and $B'=B(x',r')$ are balls in $X$ such that $x'\in B$ and $0<r'\le \diam(B)$.
\end{definition}

We call the $(p,p)$-balance condition simply {\em $p$-balance condition}; in this 
case inequality~\eqref{e.qp_balance} is more conveniently written as
\begin{equation}\label{e.p_balance}
\biggl(\frac{\diam(B')}{\diam(B)}\biggr)^{p} \frac{\nu(B')}{\nu(B)}\le C\frac{\mu(B')}{\mu(B)}\,,
\end{equation}
where  $B$ and $B'$ are as in Definition~\ref{d.qp_balance}.

In the following example we consider the  special  case $\nu=\mu$. 
\begin{example}\label{e.q=p}
If $X=\R^n$ is equipped with the standard Euclidean metric and two copies of the
Lebesgue measure $\nu=\mathcal{L}_n=\mu$, then 
$(X,d,\nu,\mu)$ satisfies
a $(q,p)$-balance condition with $q=np/(n-p)>p$ for all $1\le p<n$.
More generally, let $1\le p<\infty$ and let $X=(X,d,\nu,\mu)$ be a metric
two-measure space such that $\nu=\mu$. Then, by inequality \eqref{e.radius_measure}, 
there exists  $q>p$ such that
$X$ satisfies a $(q,p)$-balance condition.
Furthermore, if $1<p<\infty$, then by Proposition \ref{p.basic_balance}(D) below 
we find that $X$ satisfies a $(p,p-\eps)$-balance condition for some $\eps>0$.
This fact explains why the balance condition does not play a visible
role in the Keith--Zhong self-improvement results for one measure 
Poincar\'e inequalities;  cf.~\cite{MR2415381, SEB, KinnunenLehrbackVahakangasZhong}.
\end{example}

Next we establish some basic relations between different  
balance conditions. 
Below, the statement {\em $(q,p)$-balance condition {\rm(}or $p$-balance condition, resp.{\rm)}}\ means
that $X$ satisfies the respective balance condition.

\begin{proposition}\label{p.basic_balance}
Let $1\le p,q<\infty$, and assume that $X=(X,d,\nu,\mu)$ is a connected
metric two-measure space. Then the following statements hold: 
\begin{itemize}
\item[(A)] A $(q,p)$-balance condition implies $(\lambda q,\lambda p)$-balance  conditions  for every  $\lambda \geq 1$. 
\item[(B)] A $p$-balance condition implies $q$-balance conditions for all $1\le p\le q<\infty$.
\item[(C)] A $(q,p)$-balance condition implies $(q',p')$-balance conditions for all $1\le q'\leq q$ and $p'\geq p$. 
\item[(D)] If $1<p,q<\infty$ and $X$ satisfies a $(q,p)$-balance condition, then  
for every $0<\delta\le q-1$ there is $0<\varepsilon\le p-1$ such that $X$ satisfies a $(q-\delta,p-\varepsilon)$-balance condition.
\end{itemize}
\end{proposition}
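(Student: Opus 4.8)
The plan is to use (A) as the basic mechanism, to deduce (B) from it, to dispose of the monotonicity statement (C) by elementary doubling estimates, and to reserve the genuine self-improvement for (D), where connectedness is essential. Throughout, whenever $B$ and $B'=B(x',r')$ are balls with $x'\in B$ and $0<r'\le\diam(B)$, abbreviate $T=\diam(B')/\diam(B)$, $\mathcal N=\nu(B')/\nu(B)$, and $\mathcal M=\mu(B')/\mu(B)$; two elementary facts will be used repeatedly, namely $\diam(B')\le 2r'\le 2\diam(B)$ so that $T\le2$, and $B'\subset 3B$ so that $\mathcal N\le c_\nu^2$ and $\mathcal M\le c_\mu^2$ by doubling. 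For (A), raise the $(q,p)$-balance inequality $T\mathcal N^{1/q}\le C^{1/p}\mathcal M^{1/p}$ to the power $1/\lambda$ and multiply by $T^{1-1/\lambda}$:
\[ T\mathcal N^{1/(\lambda q)}=T^{1-1/\lambda}\bigl(T\mathcal N^{1/q}\bigr)^{1/\lambda}\le 2^{1-1/\lambda}C^{1/(\lambda p)}\mathcal M^{1/(\lambda p)}\le 2\,C^{1/(\lambda p)}\mathcal M^{1/(\lambda p)}, \]
which is the $(\lambda q,\lambda p)$-balance condition with constant $2^{\lambda p}C$. Then (B) is immediate from (A) with $\lambda=q/p\ge1$, since this converts the $p$-balance (that is, the $(p,p)$-balance) into the $(q,q)$-balance.

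For (C) it suffices to treat the two monotonicities separately and compose them. To lower the $\nu$-exponent from $q$ to $q'\le q$, write $\mathcal N^{1/q'}=\mathcal N^{1/q}\,\mathcal N^{1/q'-1/q}$ and bound the surplus factor by a constant via $\mathcal N\le c_\nu^2$ and $1/q'-1/q\ge0$; to raise the $\mu$-exponent from $p$ to $p'\ge p$, write $\mathcal M^{1/p}=\mathcal M^{1/p'}\,\mathcal M^{1/p-1/p'}$ and bound $\mathcal M^{1/p-1/p'}$ by a constant via $\mathcal M\le c_\mu^2$. Substituting these into the $(q,p)$-balance inequality produces the $(q',p')$-balance condition with a constant depending only on $c_\nu,c_\mu,C$ and the four exponents.

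The main point is (D). Since $X$ is connected, the reverse-doubling estimate \eqref{e.rev_d} for $\nu$ combined with \eqref{e.radius_measure} for $\mu$ yields, as in \eqref{e.nu_mu_comparison}, constants $C_1>0$ and $\tau>0$ such that $\mathcal N\le C_1\mathcal M^{\tau}$ for all admissible $B$ and $B'$. Fix $0<\delta\le q-1$ and set $\theta=\tfrac1{q-\delta}-\tfrac1q>0$. Dividing the $(q,p)$-balance inequality by $\mathcal N^{1/q}$ and multiplying by $\mathcal N^{1/(q-\delta)}$ gives
\[ T\mathcal N^{1/(q-\delta)}\le C^{1/p}\mathcal M^{1/p}\,\mathcal N^{\theta}\le C^{1/p}C_1^{\theta}\,\mathcal M^{\gamma},\qquad\gamma:=\tfrac1p+\tau\theta>\tfrac1p. \]
If $\gamma\ge1$, then $\mathcal M^{\gamma}\le c_\mu^{2(\gamma-1)}\mathcal M$ since $\mathcal M\le c_\mu^2$, and the right-hand side becomes a constant times $\mathcal M$, i.e.\ the $(q-\delta,1)$-balance condition, so $\varepsilon=p-1$ works. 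If $\gamma<1$, then $1<1/\gamma<p$, and the choice $\varepsilon:=p-1/\gamma\in(0,p-1)$ makes $\gamma=1/(p-\varepsilon)$ exactly, so the display is precisely the $(q-\delta,p-\varepsilon)$-balance condition. In both cases $0<\varepsilon\le p-1$, as required.

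I expect the only non-formal step to be the comparison $\mathcal N\le C_1\mathcal M^{\tau}$ used in (D): this is exactly where connectedness enters, and without it one can only absorb $\mathcal N^{\theta}$ into a constant, which merely reproduces a $(q-\delta,p)$-balance condition and never lowers $p$ --- which is also why the balance condition plays no visible role in the one-measure Keith--Zhong theory, cf.\ Example~\ref{e.q=p}. The remaining work is bookkeeping of constants, together with the case split $\gamma\ge1$ versus $\gamma<1$, to confirm that the exponent $\varepsilon$ delivered in (D) stays in the admissible range.
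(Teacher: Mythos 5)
Your proof is correct and follows essentially the same route as the paper: (A) by taking the balance inequality to the power $1/\lambda$ and absorbing $T^{1-1/\lambda}\le 2$, (B) as the special case $\lambda=q/p$, (C) by absorbing bounded ratio factors via doubling, and (D) via the comparison $\nu(B')/\nu(B)\le C(\mu(B')/\mu(B))^{\sigma'/s'}$ from \eqref{e.nu_mu_comparison}, which is exactly where connectedness enters. The only (cosmetic) difference is in (D): the paper picks any small $\varepsilon$ with $1/(p-\varepsilon)-1/p\le\tau\theta$ and then lowers the $\mu$-exponent using $\mu(B')\le c_\mu^2\mu(B)$, whereas you solve for $\varepsilon$ exactly and split off the case $\gamma\ge 1$; both are valid.
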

\begin{proof}
During the proof of the proposition, we  assume  that $B$ and $B'=B(x',r')$ are balls in $X$ such that $x'\in B$ and $0<r'\le \diam(B)$.

(A) Taking inequality~\eqref{e.qp_balance} to power $1/\lambda$ yields 
\[
\left(\frac{\diam(B')}{\diam(B)}\right)^{1/\lambda} \biggl(\frac{\nu(B')}{\nu(B)}\biggr)^{1/(\lambda q)}\le
C^{1/(p\lambda)}\biggl(\frac{\mu(B')}{\mu(B)}\biggr)^{1/(\lambda p)}\,, 
\]
where $C>0$ is the constant in the $(q,p)$-balance condition. Now the claim follows from the fact that $\diam(B')/\diam(B)\leq 2$.

(B) This follows from statement (A) by taking $\lambda=q/p\ge 1$. 

(C) This follows from the facts that $B'\subset 4B$ and that both $\mu$ and $\nu$ are doubling measures, and thus satisfy inequalities \eqref{e.doubling}.

(D) Let  $s'>0$ and $\sigma'>0$ 
be the exponents as in \eqref{e.radius_measure} and \eqref{e.rev_d} 
for  $\mu$ and $\nu$, respectively. We remark that
such a $\sigma'>0$ exists, since $X$ is connected.
Then we have, as in~\eqref{e.nu_mu_comparison}, 
that
\[
\frac{\nu(B')}{\nu(B)}\leq 
C\left(\frac{\mu(B')}{\mu(B)}\right)^{\sigma'/s'}\,,
\]
where the constants are independent of $B'$ and $B$.
Fix $0<\delta\le q-1$ and then choose $0<\varepsilon\le p-1$ such that
\[
 \frac{1}{p-\varepsilon}-\frac{1}{p}\leq  \left( \frac{1}{q-\delta}-\frac{1}{q} \right)\frac{\sigma'}{s'}\,.
\]
By the assumed $(q,p)$-balance condition 
and the fact that $\mu(B')\le c_\mu^2\mu(B)$, 
we  thus  obtain
\[
\begin{split}
\frac{\diam(B')}{\diam(B)} \biggl(\frac{\nu(B')}{\nu(B)}\biggr)^{1/(q-\delta)} &=\frac{\diam(B')}{\diam(B)} \biggl(\frac{\nu(B')}{\nu(B)}\biggr)^{1/q} \biggl(\frac{\nu(B')}{\nu(B)}\biggr)^{1/(q-\delta)-1/q}\\
&\le C\biggl(\frac{\mu(B')}{\mu(B)}\biggr)^{1/p} \biggl(\frac{\mu(B')}{\mu(B)}\biggr)^{(\sigma'/s')(1/(q-\delta)-1/q)}\\
&\leq C\biggl(\frac{\mu(B')}{\mu(B)}\biggr)^{1/p} \biggl(\frac{\mu(B')}{\mu(B)}\biggr)^{1/(p-\eps)-1/p} \\
&=  C \biggl(\frac{\mu(B')}{\mu(B)}\biggr)^{1/(p-\eps)}\,.
\end{split}
\]
The desired $(q-\delta,p-\eps)$-balance condition follows. \end{proof}

In some of our self-improvement results we need to assume {\em a priori} that $X$ satisfies a slightly better balance condition
than a $p$-balance condition. To this end, we define
the notion of a {\em bumped} $p$-balance condition as follows.

\begin{definition}\label{d.bumped_balance_def}
Let $\Psi\colon (0,\infty)\to (0,\infty)$ be a function.
We say that a metric (or a geodesic) two-measure space
$(X,d,\nu,\mu)$ 
{\em satisfies a $\Psi$-bumped $p$-balance condition},  if
\begin{equation}
\biggl(\frac{\diam(B')}{\diam(B)}\biggr)^{p} \frac{\nu(B')}{\nu(B)}\le
\Psi\biggl(\frac{\diam(B')}{\diam(B)}\biggr)\frac{\mu(B')}{\mu(B)} 
\end{equation}
whenever $B$ and $B'=B(x',r')$ are balls in $X$ such that $x'\in B$ and $0<r'\le \diam(B)$.
\end{definition}

The following result shows that under mild assumptions,
a $\Psi$-bumped $p$-balance condition is equivalent to  a $(p-\varepsilon)$-balance condition
for some $\eps>0$.

\begin{theorem}\label{t.bump_impro}
Let $1< p<\infty$, and assume 
that $X=(X,d,\nu,\mu)$ is a connected metric two-measure space.
Then the following conditions are equivalent:
\begin{enumerate}
\item[(A)] $X$ satisfies  a  $(p-\varepsilon)$-balance condition for some $0<\varepsilon< p-1$.
\item[(B)] $X$ satisfies  a  $\Psi$-bumped $p$-balance condition with a function $\Psi\colon (0,\infty)\to (0,\infty)$ for which there exists $t_{0}>0$ and $0<\delta<1$ 
such that $\Psi(t)\leq \delta$ for all $0<t\leq t_{0}$.
\end{enumerate}
\end{theorem}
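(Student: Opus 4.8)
The plan is to prove the two implications (B)$\Rightarrow$(A) and (A)$\Rightarrow$(B) separately, where the first is the substantial direction and the second is essentially a routine estimate.

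First I would dispose of (A)$\Rightarrow$(B). Assume $X$ satisfies a $(p-\varepsilon)$-balance condition with constant $C_0$, so that
\[
\biggl(\frac{\diam(B')}{\diam(B)}\biggr)^{p-\varepsilon}\frac{\nu(B')}{\nu(B)}\le C_0\,\frac{\mu(B')}{\mu(B)}
\]
for all admissible pairs $B,B'$. Writing $t=\diam(B')/\diam(B)$, which by \eqref{e.diams} satisfies $t\le 4$, I multiply both sides by $t^{\varepsilon}$ to get the $p$-balance inequality with bump function $\Psi(t)=C_0 t^{\varepsilon}$ on $(0,4]$; I may extend $\Psi$ arbitrarily (say by the constant $C_0 4^{\varepsilon}$) on $(4,\infty)$ since that range is never used. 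Then for $0<t\le t_0:=\min\{4,(2C_0)^{-1/\varepsilon}\}$ we have $\Psi(t)=C_0 t^{\varepsilon}\le \tfrac12=:\delta<1$, which is exactly condition (B). One has to be mildly careful that $\diam(B')$ may be comparable to $\diam(B)$ only up to the factor in \eqref{e.diams}, but since $t\le 4$ this only affects constants.

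For the main direction (B)$\Rightarrow$(A), the idea is a standard iteration/telescoping argument exploiting the strict smallness $\delta<1$ of the bump at small scales, together with the connectedness estimate \eqref{e.rev_d} at large scales. Fix admissible balls $B,B'=B(x',r')$ with $x'\in B$ and $0<r'\le\diam(B)$, and set $t=\diam(B')/\diam(B)$. If $t\ge t_0$ (the "large scale" regime, a bounded range of ratios), then \eqref{e.rev_d} gives $\nu(B')/\nu(B)\le C$, and reverse-doubling-type lower bounds for $\mu$ (the analogue of \eqref{e.radius_measure} for $\mu$) give $\mu(B')/\mu(B)\ge c\, t_0^{\,s'}\ge c'$, so the desired $(p-\varepsilon)$-balance inequality holds trivially with a large constant for any $\varepsilon$. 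In the "small scale" regime $0<t\le t_0$, I would form a chain of balls $B=B_0\supset B_1\supset\cdots\supset B_k$ centered appropriately (e.g. all centered near $x'$, following the construction used to prove reverse-doubling statements such as \eqref{e.rev_d} and as in \cite[Corollary 3.8]{MR2867756}) with radius ratios $\diam(B_{j+1})/\diam(B_j)$ roughly a fixed constant $\theta\in(0,1)$ chosen so that $\theta\le t_0$, and with $B_k\approx B'$. Applying the $\Psi$-bumped $p$-balance inequality to each consecutive pair and multiplying,
\[
\prod_{j=0}^{k-1}\frac{\nu(B_{j+1})}{\nu(B_j)}
\le \prod_{j=0}^{k-1}\Psi\!\left(\frac{\diam(B_{j+1})}{\diam(B_j)}\right)\left(\frac{\diam(B_j)}{\diam(B_{j+1})}\right)^{p}\frac{\mu(B_{j+1})}{\mu(B_j)}
\le \delta^{k}\left(\frac{\diam(B)}{\diam(B')}\right)^{p}\frac{\mu(B')}{\mu(B)},
\]
after the telescoping collapses all intermediate factors (absorbing the fixed distortion constants from \eqref{e.diams} and Lemma~\ref{l.ball_measures} into $C$). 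The point is that the number of steps $k$ is comparable to $\log(1/t)$, so $\delta^{k}\le C t^{\eta}$ for $\eta=\log(1/\delta)/\log(1/\theta)>0$. Rearranging yields
\[
\biggl(\frac{\diam(B')}{\diam(B)}\biggr)^{p}\frac{\nu(B')}{\nu(B)}\le C\, t^{\eta}\,\frac{\mu(B')}{\mu(B)},
\]
and then choosing $\varepsilon>0$ small enough that $\varepsilon\le\eta$ (and $\varepsilon<p-1$), so that $t^{\eta}\le 4^{\eta-\varepsilon} t^{\varepsilon}$ for $t\le 4$, converts this to $(\diam(B')/\diam(B))^{p-\varepsilon}\nu(B')/\nu(B)\le C\,\mu(B')/\mu(B)$, i.e. the $(p-\varepsilon)$-balance condition.

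**The main obstacle.** The technical heart is making the chain of balls rigorous in a general (merely connected, doubling) metric two-measure space: one needs that for $x'\in B$ and $r'\le\diam(B)$ one can interpolate a sequence of balls of geometrically decreasing radii, all "admissible" relative to their predecessors in the sense required by the bumped balance condition (center of each inside the previous one, radius at most its diameter), with the first comparable to $B$ and the last comparable to $B'$; the measure distortions incurred when replacing $B'$ by $B_k$ must be controlled by doubling and by Lemma~\ref{l.ball_measures}. This is exactly the mechanism behind \eqref{e.radius_measure} and \eqref{e.rev_d}, so I would either cite those constructions directly or reproduce the short chaining argument, and the bookkeeping of how the fixed per-step constants combine (they do not, since they telescope or stay uniformly bounded because $\theta$ is fixed) is the only place demanding care. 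Everything else — Hölder-type manipulations of exponents, the reduction to small $t$, the final choice of $\varepsilon$ — is routine.
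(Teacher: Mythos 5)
Your proposal is correct and follows essentially the same route as the paper: the implication (A)$\Rightarrow$(B) via $\Psi(t)=Ct^{\varepsilon}$, and (B)$\Rightarrow$(A) via reducing to the small-scale regime $\diam(B')\lesssim t_0\diam(B)$, chaining concentric balls with fixed radius ratio, multiplying the telescoping bumped-balance inequalities to get a factor $\delta^{k}\le Ct^{\eta}$, and finally shrinking $\varepsilon$ below $\min\{\eta,p-1\}$. The only cosmetic difference is that the paper chooses the step ratio $t$ adaptively so the chain lands exactly on $B'$ and $B$, whereas you use a fixed ratio and absorb the endpoint discrepancy via doubling and Lemma~\ref{l.ball_measures}; both are fine.
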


\begin{proof} 
The implication (A) $\Longrightarrow$ (B) follows by choosing $\Psi(t)=Ct^\eps$ for each $t>0$, where
$C>0$ is the constant  in  the $(p-\eps)$-balance condition~\eqref{e.p_balance}.

(B) $\Longrightarrow$ (A).
Without loss of generality, we may assume that (B) holds for some $0<t_0<1$. 
Let $B=B(x,r)$ and $B'=B(x',r')$ be balls in $X$ such that $x'\in B$ and $0<r'\le \diam(B)$.
By the doubling property \eqref{e.doubling} of the measures  and the assumption that $X$ is connected, we may in addition assume that  $\diam(B')\le (t_{0}/2)\diam(B)$. 
Let $j\in \mathbb Z$ and $0<t\le t_{0}/2$ be such that $(t_{0}/2)^{j+1}\diam(B)<\diam(B')\le(t_{0}/2)^j\diam(B)$ and 
$\diam(B')=t^j \diam(B)$. It follows that $j\geq 1$ and $(t_{0}/2)^2< t\leq t_{0}/2<1/2$. 
Let \[B_{0}=B(x_{0},r_{0})=B(x',r')=B'\,,\] choose $r_{i}=t^{-i}r_{0}$ and $B_{i}=B(x',r_{i})$ for $1\leq i<j$, and
finally let $B_{j}=B$.  Fix $0\le i <j$.  Since $\diam(B')<\diam(B)\le\diam(X)$, it holds that 
\[r_i=t^{-i}r_{0}\le t^{-j+1}r_{0}< 2^{-1}t^{-j}\diam(B')=2^{-1}\diam(B)\,,\]
and so $r_i\le \diam(B_i)\le 2r_i$;   
here we have also used the assumption that $X$ is connected.
From the previous estimates it follows that  $\diam(B_{i-1})/\diam(B_{i}) \le 2r_{i-1}/r_i = 2t\le t_0$ for all $1\le i <j$,  and also that
\[\diam(B_{j-1})/\diam(B_{j})\le 2t^{-j+1}\diam(B')/\diam(B)=2t\le t_0.\]

Hence, by applying condition (B), we obtain for each $1\le i \le j$ that
\[
\biggl(\frac{\diam(B_{i-1})}{\diam(B_{i})}\biggr)^{p} \frac{\nu(B_{i-1})}{\nu(B_{i})}\le
\Psi\biggl(\frac{\diam(B_{i-1})}{\diam(B_{i})}\biggr)\frac{\mu(B_{i-1})}{\mu(B_{i})} 
\le \delta \frac{\mu(B_{i-1})}{\mu(B_{i})}\,.
\]
By multiplying these inequalities we thus obtain
\begin{equation}\label{eq.product}
\frac{\diam(B')^p}{\diam(B)^p}\frac{\nu(B')}{\nu(B)}= 
\prod_{i=1}^{j}\frac{\diam(B_{i-1})^p}{\diam(B_{i})^p}\frac{\nu(B_{i-1})}{\nu(B_{i})}\leq 
\prod_{i=1}^{j}\delta\frac{\mu(B_{i-1})}{\mu(B_{i})}= \delta^j\frac{\mu(B')}{\mu(B)}\,.
\end{equation}
Now choose 
\[
\eps = \frac{\log \delta}{\log((t_0/2)^2)} >0.
\] 
Then $\log( (t_0/2)^{2\eps}) = \log \delta$, 
and so 
$\delta^j=(t_0/2)^{2j\eps}\le t^{j\eps}$.
Thus we conclude from~\eqref{eq.product} that
\[
\biggl(\frac{\diam(B')}{\diam(B)}\biggr)^p\frac{\nu(B')}{\nu(B)}
\le \delta^j\frac{\mu(B')}{\mu(B)} 
\le t^{j\eps}\frac{\mu(B')}{\mu(B)}
= \biggl(\frac{\diam(B')}{\diam(B)}\biggr)^\eps\frac{\mu(B')}{\mu(B)}\,.
\]
In the case $\eps<p-1$,  we see that
$X$ satisfies 
the $(p-\eps)$-balance condition with $0<\eps<p-1$.
If $\eps \ge p-1$, we replace $\eps$ with $(p-1)/2$, and  the claim follows. 
\end{proof}

The following  example  shows 
that a mere $p$-balance condition does 
not  imply a $(p-\eps)$-balance condition.
We will return to this example later in connection
with Poincar\'e inequalities;  cf.\ Example~\ref{ex.no_impro}.

\begin{example}\label{ex.bal_no_impro}
Consider $X=\R^n$ equipped with the standard Euclidean metric  $d$  and let $\mu$ be the $n$-dimensional Lebesgue measure  $\mathcal L^n$
on $\R^n$.  Fix $1< p< n$ and let $w(x)=\lvert x\rvert^{-p}$ if $x\in\R^n\setminus\{0\}$. Then the weight
$w$ belongs to the Muckenhoupt class $A_1(\mu)\subset A_\infty(\mu)$; see e.g.~\cite[p.~229]{Torchinsky1986}. 
We let $\nu$ be the $w$-weighted Lebesgue measure, that is, 
\[d\nu(x)=w(x)\,d\mu(x)=\lvert x\rvert^{-p}\,d\mu(x)\,.\]
Then $X=(X,d,\nu,\mu)$ is a geodesic two-measure
space 
that satisfies the $p$-balance condition.
Indeed,  the $p$-balance condition  can be established by considering the cases $0\in 8B$ and $0\not\in 8B$
separately and applying the $A_1(\mu)$-property
of $w$ in the former case; here the ball  $B\subset \R^n$ is as in Definition \ref{d.qp_balance} with $q=p$.

On the other hand, 
by the $A_1(\mu)$-property of $w$, 
there is a constant $c>1$ such that
\[c^{-1} r^{n-p}\le \nu(B(0,r))\le cr^{n-p}\] for all $r>0$. Hence, if $0<\eps\le p-1$
and $0<r'<r$, it holds for balls
$B=B(0,r)$ and $B'=B(0,r')$ that
\[
\biggl(\frac{\diam(B')}{\diam(B)}\biggr)^{p-\eps} \frac{\nu(B')}{\nu(B)} 
\ge C \biggl(\frac{r'}{r}\biggr)^{p-\eps} \biggl(\frac{r'}{r}\biggr)^{n-p}
\ge C \biggl(\frac{r'}{r}\biggr)^{-\eps} \frac{\mu(B')}{\mu(B)}
\]
with constants independent of both $B$ and $B'$. 
Keeping $r$ fixed and letting $r' \to 0$ shows that $X=(X,d,\nu,\mu)$ does not
satisfy a $(p-\eps)$-balance condition for any $0<\eps\le p-1$. 
\end{example}

\section{Poincar\'e inequalities}\label{s.two_measure_PI}

Let  $1\le p<\infty$ and let  $X=(X,d,\nu,\mu)$ be a metric two-measure space; recall  Section~\ref{s.metric}. 
We say that a $\mu$-measurable function $g\colon X\to [0,\infty]$ is a {\em $p$-weak upper gradient (w.r.t.\ $X$)}
of a function $u\colon X\to \R$ 
if inequality
\begin{equation}\label{e.modulus}
\lvert u(\gamma(0))-u(\gamma(\ell_\gamma))\rvert \le \int_\gamma g\,ds
\end{equation}
holds for $p$-almost every curve $\gamma\colon [0,\ell_\gamma]\to X$; that is, there exists a non-negative  Borel function 
$\rho\in L^p_{\mathrm{loc}}(X;d\mu)$  such that 
$\int_\gamma \rho\,ds=\infty$ whenever inequality~\eqref{e.modulus} does not hold or is not defined.
We refer to~\cite{MR2867756,MR1800917,MR3363168} for further information
on $p$-weak upper gradients.

\begin{definition}
Let $1< p<\infty$. 
For a Lipschitz function $u\in \Lip(X)$, we let $\mathcal{D}^{p}(u)=\mathcal{D}^{p}(u;d\mu)$ be the set of
all  $p$-weak upper gradients $g\in L^p_{\textup{loc}}(X;d\mu)$ of $u$.
\end{definition}

The following
conditions (D1)--(D3) hold for all Lipschitz functions $u,v\colon X\to \R$:
\begin{itemize}
\item[(D1)] $\lvert a\rvert g\in \mathcal{D}^{p}(au)$ if $a\in\R$ and $g\in \mathcal{D}^{p}(u)$,
\item[(D2)] $g + \hat g\in \mathcal{D}^{p}(u+v)$  if $g\in\mathcal{D}^{p}(u)$ and $\hat g\in\mathcal{D}^{p}(v)$, 
\item[(D3)] If $v\colon X\to \R$ is $\kappa$-Lipschitz function with a constant $\kappa\ge 0$,  
$v|_{X\setminus E}=u|_{X\setminus E}$ for a Borel set $E\subset X$,  and $g\in \mathcal{D}^{p}(u)$,  then
$\kappa \mathbf{1}_{E} + g \mathbf{1}_{X\setminus E}\in\mathcal{D}^{p}(v)$.
\end{itemize}
The properties (D1) and (D2) are rather well known, see for instance \cite[Corollary 1.39]{MR2867756}. 
The property (D3) is a consequence of  the  `Glueing lemma', see e.g.~\cite[Lemma 2.19, Remark 2.28]{MR2867756}.

The family $\mathcal{D}^{p}(u)$ has  the following
{\em minimality property}:
if  $u\in \Lip(X)$, then there exists
a $p$-weak upper gradient
$g_u\in \mathcal{D}^{p}(u)$ such that $g_u\le g$ pointwise $\mu$-almost
everywhere if $g\in \mathcal{D}^{p}(u)$; we refer to~\cite[Theorem 2.25]{MR2867756}.
This function $g_u$ is called the {\em minimal $p$-weak upper gradient of $u$,} and it is unique up to sets of $\mu$-measure zero in $X$. 

Let  $1< q\le p<\infty$.  If $u\in \Lip(X)$, then it is clear that $\mathcal{D}^p(u)\subset \mathcal{D}^{q}(u)$.
However, in general, it is not true that $\mathcal{D}^p(u)\supset \mathcal{D}^{q}(u)$;
see~\cite{MR3411142}. Therefore, in some of our results we need to explicitly assume that the following
indepdence property is valid for a suitable exponent.

\begin{definition}\label{d.independence}
Let $1<p<\infty$. A metric two-measure space $X$ has the {\em $p$-independence property}, if
 for all $q\ge p$ and for all $u\in \Lip(X)$ 
the minimal $q$-weak upper gradient $h_u\in\mathcal{D}^q(u)$ of $u$ coincides $\mu$-almost everywhere
with the minimal $p$-weak upper gradient $g_{u}\in\mathcal{D}^p(u)$.
\end{definition}

In other words,  the  $p$-independence property
means that the minimal $q$-weak upper gradient of $u\in \Lip(X)$
is independent of $q\ge p$. We  emphasize that this property does not depend
on the measure $\nu$ at all.

Let us provide some examples when a $p$-independence property holds.

\begin{example}\label{e.p_indep}  Let $1<p<\infty$.
If $X=(X,d,\nu,\mu)$ is a complete metric two-measure space and  the space  $Y=(X,d,\mu,\mu)$
supports  a  $(1,p)$-Poincar\'e inequality 
in the sense of the following Definition~\ref{d.poincare}, then
$X$ has the $p$-independence property. This 
follows from~\cite[Corollary A.8, Theorem 4.15]{MR2867756} and the fact that $\mu$ is
doubling.
\end{example}

\begin{definition}\label{d.poincare}
Let $1\le q,p<\infty$.
We say that a  metric two-measure space $X=(X,d,\nu,\mu)$ supports a 
{\em $(q,p)$-Poincar\'e inequality}, if
there exists a constant $K_{q,p}>0$ such that 
\[
\biggl(\vint_{B} \lvert u(x)-u_{B;\nu}\rvert^q\,d\nu(x)\biggr)^{1/q}
\le K_{q,p}^{1/p}\diam(B)\biggl(\vint_{B} g(x)^p \,d\mu(x)\biggr)^{1/p}
\]
whenever $B$ is a ball in $X$, and  $u\in\Lip(X)$ and $g\in \mathcal{D}^{p}(u)$.
\end{definition}

If a connected metric two-measure space $X$ supports a $(q,p)$-Poincar\'e inequality, then $X$ also satisfies a $(q,p)$-balance condition. This
follows from Lemma \ref{l.Poincare_implies_balance}  below.
The proof of this lemma follows the argument given in~\cite{ChanilloWheeden1985} 
for the special case $X=\R^n$.
 Note that the  integration on  the right-hand side
of inequality~\eqref{e.assumed} is taken over the whole space  $X$.


\begin{lemma}\label{l.Poincare_implies_balance}
Let $1\le q,p<\infty$. Assume that
$X=(X,d,\nu,\mu)$ is a connected metric two-measure space and 
that there is a constant $C_1>0$ such that inequality
\begin{equation}\label{e.assumed}
\biggl(\vint_{B} \lvert u(y)-u_{B;\nu}\rvert^q\,d\nu(y)\biggr)^{1/q}\le C_1\diam(B)\biggl(\frac{1}{\mu(B)}\int_X g(y)^{p}\,d\mu(y)\biggr)^{1/p}
\end{equation}
holds whenever $B$ is a ball in $X$, $u\in \Lip(X)$ has a bounded support, and $g\in \mathcal{D}^p(u)$.
Then $X$ satisfies a $(q,p)$-balance condition.
\end{lemma}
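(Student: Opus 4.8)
The plan is to prove the $(q,p)$-balance condition by testing the assumed inequality~\eqref{e.assumed} on a carefully chosen Lipschitz ``tent'' function adapted to the inner ball $B'$, following the classical Chanillo--Wheeden argument. Fix balls $B=B(x,r)$ and $B'=B(x',r')$ with $x'\in B$ and $0<r'\le\diam(B)$. Define $u(y)=\max\{0,\,1-\dist(y,B')/r'\}$, which is $(1/r')$-Lipschitz, equals $1$ on $B'$, vanishes outside $2B'$ (so it has bounded support), and has $g=(1/r')\mathbf{1}_{2B'}\in\mathcal{D}^p(u)$ as an upper gradient. The right-hand side of~\eqref{e.assumed} is then controlled by
\[
C_1\diam(B)\biggl(\frac{1}{\mu(B)}\cdot\frac{1}{r'^{\,p}}\,\mu(2B')\biggr)^{1/p}
\le C\,\frac{\diam(B)}{\diam(B')}\biggl(\frac{\mu(B')}{\mu(B)}\biggr)^{1/p},
\]
using $\mu(2B')\le c_\mu\mu(B')$ and $\diam(B')\le 2r'$ (valid since $X$ is connected, cf.~\eqref{e.diams}).

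Next I would bound the left-hand side of~\eqref{e.assumed} from below. The key point is that $u\equiv 1$ on $B'$, so on $B'$ we have $|u-u_{B;\nu}|=|1-u_{B;\nu}|$; combined with the complementary estimate on the part of $B$ where $u$ is small, one gets $\bigl(\vint_B|u-u_{B;\nu}|^q\,d\nu\bigr)^{1/q}\ge c\,(\nu(B')/\nu(B))^{1/q}$ for some dimensional-type constant $c>0$. Concretely, this follows by splitting into the cases $u_{B;\nu}\ge 1/2$ and $u_{B;\nu}<1/2$: in the first case the mass of $\{u<1/2\}\supset B\setminus 2B'$ contributes (one needs $\nu(B\setminus 2B')\gtrsim\nu(B)$, which may require first reducing to $r'$ small, say $r'\le r/100$, via the doubling property and a covering argument as in Proposition~\ref{p.basic_balance}(C)); in the second case $B'$ itself contributes at least $(1/2)^q\nu(B')/\nu(B)$. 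Here Lemma~\ref{l.ball_measures} (giving $\nu(B')\le c_\nu^3\nu(B'\cap B)$) is exactly what lets us pass from $\nu(B')$ to the $\nu$-mass of $B'$ as seen inside $B$.

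Combining the two bounds yields
\[
c\biggl(\frac{\nu(B')}{\nu(B)}\biggr)^{1/q}
\le C\,\frac{\diam(B)}{\diam(B')}\biggl(\frac{\mu(B')}{\mu(B)}\biggr)^{1/p},
\]
which is precisely the $(q,p)$-balance inequality~\eqref{e.qp_balance} after rearranging (with $C^{1/p}$ absorbing all constants). The main obstacle I anticipate is the lower bound on the left-hand side: one must handle the case where $u_{B;\nu}$ is close to $1$, which forces control of how much of $B$ lies outside $2B'$, and this breaks down when $\diam(B')$ is comparable to $\diam(B)$. The clean fix is to first dispose of the ``large $B'$'' regime (say $\diam(B')\ge\tfrac14\diam(B)$, equivalently $2B'\not\subsetneq B$ in a quantitative sense) directly from the doubling inequalities~\eqref{e.doubling}, exactly as in the proof of Proposition~\ref{p.basic_balance}(C), and then run the tent-function argument only in the remaining regime where $2B'$ occupies a definite fraction of $B$ so that $\nu(B\setminus 2B')\ge c_0\nu(B)$ holds by~\eqref{e.radius_measure}. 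With that reduction in place, the rest is bookkeeping with the doubling constants.
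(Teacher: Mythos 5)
Your proposal is correct and follows the same Chanillo--Wheeden testing strategy as the paper, but the test function and the lower-bound mechanism differ in a way worth noting. You use the plain tent $\varphi=\max\{0,1-\dist(\cdot,B')/r'\}$ with upper gradient $(1/r')\mathbf{1}_{2B'}$, and then lower-bound the left-hand side via a dichotomy on $u_{B;\nu}$, which forces you to secure $\nu(B\setminus 2B')\gtrsim\nu(B)$ in one branch. The paper instead tests with $u(y)=d(y,x')\varphi(y)$, whose upper gradient is bounded by $3\cdot\mathbf{1}_{2B'}$ (the $r'$-scaling migrates from the gradient to the oscillation); after reducing to $\nu(2B')/\nu(B)\le 1/8$ this gives $u_{B;\nu}\le r'/4$ outright, and the lower bound comes from the annulus $B'\setminus\tfrac12 B'$ where $u\ge r'/2$, so no case analysis on the average is needed. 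Both routes require the same preliminary reduction to the regime $2B'\subset B$ with $2B'$ carrying a small fraction of $\nu(B)$ (the paper also relegates this to the reader), and both land on the same balance inequality after rearranging. One small correction: the quantitative bound $\nu(2B')\le\tfrac12\nu(B)$ for $r'/\diam(B)$ small follows from the reverse estimate~\eqref{e.rev_d} (which uses connectedness), not from~\eqref{e.radius_measure}, which points in the opposite direction; with that citation fixed your argument is complete.
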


\begin{proof}
We  need to show  that inequality~\eqref{e.qp_balance} holds for all balls $B$ and $B'=B(x',r')$ in $X$
such that $x'\in B$ and $0<r'\le \diam(B)$.
By using inequalities \eqref{e.doubling} and \eqref{e.diams}, and~\cite[Lemma 3.7]{MR2867756}, 
we can furthermore assume that
\[
2B'\subset B\,,\qquad \frac{\nu(2B')}{\nu(B)}\le \frac{1}{8}\,,\quad\text{ and }\quad \frac{\nu(B')}{\nu(B'\setminus 2^{-1}B')}\le C(c_\nu)
\]
for some constant $C(c_\nu)>0$  only  depending on $c_\nu$.  (This  reduction is straightforward but 
slightly tedious to establish, and hence  we leave  the  details to the interested reader.)
Let $\varphi\colon X\to \R$ be the $1/r'$-Lipschitz function that is defined  for all $y\in X$  by
\[
\varphi(y)=\max\biggl\{1-\frac{d(y,B')}{r'},0\biggr\}\,.
\]
Observe that $0\le \varphi\le 1$, that $\varphi=1$ in $B'$, and that $\varphi=0$ in $X\setminus 2B'$.

By condition (D3), applied with $E=2B'$, we see that
$(1/r')\mathbf{1}_{2B'}\in \mathcal{D}^p(\varphi)$.
Define a Lipschitz function $u\colon X\to [0,\infty)$ with  bounded  support 
by setting $u(y)=d(y,x')\varphi(y)$ for every $y\in X$. By the product rule \cite[Theorem 2.15]{MR2867756}, 
\[
g=\frac{d(\cdot,x')}{r'}\mathbf{1}_{2B'} + \varphi \in \mathcal{D}^p(u)\,.
\]
Notice, in particular, that $g\le 3\cdot \mathbf{1}_{2B'}$.
We estimate
\begin{align*}
u_{B;\nu}=\frac{1}{\nu(B)}\int_{B} d(y,x')\varphi(y)\,d\nu(y)
\le \frac{1}{\nu(B)}\int_{2B'} d(y,x')\,d\nu(y)\le 2r'\frac{\nu(2B')}{\nu(B)}\le \frac{r'}{4}\,.
\end{align*}
Now, for every $y\in B'\setminus 2^{-1}B'$,  we have
$u(y)= d(y,x')\varphi(y)\ge r'/2$.
As a consequence, we obtain 
\begin{align*}
\frac{1}{\nu(B')}\int_B\lvert u(y)-u_{B;\nu}\rvert^q\,d\nu(y)&\ge \vint_{B'} \lvert u(y)-u_{B;\nu}\rvert^q\,d\nu(y)\\
&\ge C(c_\nu)^{-1}\vint_{B'\setminus 2^{-1}B'} \lvert u(y)-u_{B;\nu}\rvert^q\,d\nu(y)\ge C(c_\nu)^{-1}\biggl(\frac{r'}{4}\biggr)^q\,.
\end{align*}
On the other hand, by the assumed inequality \eqref{e.assumed},
\begin{align*}
\biggl(&\frac{1}{\diam(B)^q}\vint_{B} \lvert u(y)-u_{B;\nu}\rvert^q\,d\nu(y)\biggr)^{p/q}\le C_1^p\frac{1}{\mu(B)}\int_X g(y)^{p}\,d\mu(y)\le C(C_1,p)\frac{\mu(2B')}{\mu(B)}\,.
\end{align*}

By combining the estimates above, we see that 
\begin{align*}
 C(c_\nu)^{-1}\biggl(\frac{r'}{4\diam(B)}\biggr)^q\frac{\nu(B')}{\nu(B)}
&\le \frac{1}{\diam(B)^q}\vint_{B} \lvert u(y)-u_{B;\nu}\rvert^q\,d\nu(y) \\
&\le \biggl(C(C_1,p)\frac{\mu(2B')}{\mu(B)}\biggr)^{q/p}\,.
\end{align*} 
That is, 
\[
\frac{\diam(B')}{\diam(B)}\biggl(\frac{\nu(B')}{\nu(B)}\biggr)^{1/q}
 \le C(q,p,c_\nu,c_\mu,C_1)
 \biggl(\frac{\mu(B')}{\mu(B)}\biggr)^{1/p}\,,
\]
which is the desired inequality \eqref{e.qp_balance}
with a constant $C=C(q,p,c_\nu,c_\mu,C_1)>0$
independent of the balls $B$ and $B'$.
\end{proof}

The following example illustrates a case where a $(p,p)$-Poincar\'e inequality does not
improve to a $(p,p-\eps)$-Poincar\'e inequality --- 
or even to a $(p-\eps,p-\eps)$-Poincar\'e inequality ---  for any  $0<\eps\le p-1$.
The obstruction is that there is no $(p-\eps)$-balance condition.
This obstruction cannot appear
 in  a metric space $X=(X,d,\mu,\mu)$  equipped with a single measure $\mu$; 
see Example \ref{e.q=p}. Therefore, in such a geodesic 
space $X$, a $(p,p)$-Poincar\'e inequality 
always implies a $(p,p-\eps)$-Poincar\'e inequality for some $\eps>0$;
cf.~\cite{DeJarnette,SEB,MR2415381,KinnunenLehrbackVahakangasZhong}.

\begin{example}\label{ex.no_impro}
Let $1<p<n$.
Let $X=(X,d,\nu,\mu)$ be the geodesic two-measure space as in Example \ref{ex.bal_no_impro}.
It follows from the results in~\cite{ChanilloWheeden1985CPDE} that $X$  supports a 
$(p,p)$-Poincar\'e inequality; see also \cite[Remark~1.6]{FranchiLuWheeden1995}
and \cite[Corollary 3.2]{MR1609261}.
By Example \ref{ex.bal_no_impro},
we find that $X$ does not
satisfy a $(p-\eps)$-balance condition for any $0<\eps\le p-1$. Lemma \ref{l.Poincare_implies_balance} then implies that $X$ 
does not support a $(p-\eps,p-\eps)$-Poincar\'e inequality for any $0<\eps\le p-1$. Moreover, by H\"older's
inequality, we find that $X$ does not support
a $(p,p-\eps)$-Poincar\'e inequality for
any $0<\eps\le p-1$.
\end{example}

\section{Maximal Poincar\'e inequalities}\label{s.max_PI}

In the light of Example \ref{ex.no_impro}, it  is  
clear that
Poincar\'e inequalities in metric two-measure spaces are not always self-improving.
 In this section we introduce slightly stronger variants of Poincar\'e inequalities, which
turn out to be 
more amenable to self-improvement.  These maximal Poincar\'e inequalities are defined
in terms of the following sharp maximal functions.

\begin{definition}\label{d.m_def}
Let $1\le q<\infty$.
If $\mathcal{B}\not=\emptyset$ is a family
of balls in a metric two-measure space $X=(X,d,\nu,\mu)$ and $u\colon X\to \R$ is a Lipschitz function, then we define a sharp maximal function
\[
M^{\nu,q}_{\mathcal{B}}u(x)=\sup_{B\colon x\in B\in \mathcal{B}} \biggl(\frac{1}{\diam(B)^q}\vint_B \lvert u(y)-u_{B;\nu}\rvert^q\,d\nu(y)\biggr)^{1/q}\,,\qquad x\in X\,.
\]
The supremum above is defined to be zero, if there is no ball $B$ in $\mathcal{B}$ such that $x\in B$. 
\end{definition}

We remark that  if $u\colon X\to \R$ is a $\kappa$-Lipschitz function, then 
$M^{\nu,q}_{\mathcal{B}}u(x)\le \kappa$ for every $x\in X$. This  fact follows from the estimate
\[
\lvert u(y)-u_{B;\nu}\rvert^q
\le \vint_{B} \lvert u(y)-u(z)\rvert^q\,d\nu(z)\le \kappa^q\,\diam(B)^q\,,\qquad y\in B\in\mathcal{B}\,.
\]

When $B_0\subset X$ is a fixed ball, we will often use the sharp maximal function $M^{\nu,q}_{\mathcal{B}_0}u(x)$ with
\begin{equation}\label{eq.ball_family_0}
\mathcal{B}_0=\{B(x,r)\,:\,B(x,2r)\subset B_0\}\,,
\end{equation}
which we call the \emph{family of balls associated with} $B_0$.
This family is needed, for instance, in the following definition
of the  so-called maximal Poincar\'e inequalities. 

\begin{definition} Let $1\le q,p<\infty$.
We say that a metric two-measure space $X=(X,d,\nu,\mu)$ supports a {\em maximal $(q,p)$-Poincar\'e inequality}, with a constant  $C>0$,
if for every ball $B_0\subset X$ it holds that
\begin{equation*}
\begin{split}
\int_{B_0}\bigl( M^{\nu,q}_{\mathcal{B}_0} u(x)\bigr)^{p}\,d\mu(x)
\le C\int_{B_0} g(x)^p\,d\mu(x)
\end{split}
\end{equation*}
whenever
$u\in\Lip(X)$ and $g\in\mathcal{D}^{p}(u)$,
where $\mathcal{B}_0$ is the family~\eqref{eq.ball_family_0} of balls associated with $B_0$.
\end{definition}

The following lemma yields basic  
relations between Poincar\'e type inequalities and their maximal analogues.

\begin{lemma}\label{l.straightforward}
Let $1<p,q<\infty$ and assume that $X=(X,d,\nu,\mu)$ 
is  a metric two-measure space.
If $X$ supports a 
maximal $(q,p)$-Poincar\'e inequality, then there is a constant $C>0$ such that \begin{equation}\label{e.weak_poincare}
\biggl(\vint_{B} \lvert u(x)-u_{B;\nu}\rvert^q\,d\nu(x)\biggr)^{1/q}
\le C\diam(B)\biggl(\vint_{2B} g(x)^p \,d\mu(x)\biggr)^{1/p}
\end{equation}
whenever $B$ is a ball in $X$ and whenever 
$u\in\Lip(X)$ and $g\in \mathcal{D}^{p}(u)$.

Conversely, if $X$ supports a $(q,p-\eps)$-Poincar\'e inequality,
for some $0<\eps<p-1$, then  $X$ supports a maximal $(q,p)$-Poincar\'e inequality.
\end{lemma}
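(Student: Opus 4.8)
The plan is to prove the two implications separately, both by elementary arguments that only use the definition of the sharp maximal function and Hölder's inequality. For the forward direction, fix a ball $B=B(x_0,r)$ and let $u\in\Lip(X)$, $g\in\mathcal{D}^p(u)$. The key observation is that the ball $\frac{1}{2}B=B(x_0,r/2)$ belongs to the family $\mathcal{B}_{2B}$ associated with $2B$, since $B(x_0,r)\subset 2B$; hence for every $x\in\frac12 B$ we have
\[
\biggl(\frac{1}{\diam(\tfrac12 B)^q}\vint_{\frac12 B}\lvert u(y)-u_{\frac12 B;\nu}\rvert^q\,d\nu(y)\biggr)^{1/q}\le M^{\nu,q}_{\mathcal{B}_{2B}}u(x)\,.
\]
Raising to the $p$th power, integrating over $\frac12 B$ with respect to $\mu$, and applying the assumed maximal $(q,p)$-Poincar\'e inequality for the ball $2B$ gives
\[
\mu(\tfrac12 B)\,\frac{1}{\diam(\tfrac12 B)^p}\biggl(\vint_{\frac12 B}\lvert u-u_{\frac12 B;\nu}\rvert^q\,d\nu\biggr)^{p/q}\le C\int_{2B}g^p\,d\mu\,.
\]
Using $\mu(\frac12 B)\ge C(c_\mu)^{-1}\mu(B)\ge C(c_\mu)^{-1}\mu(2B)/c_\mu$ together with $\diam(\frac12 B)\le\diam(B)\le\diam(2B)$, this yields~\eqref{e.weak_poincare} for the ball $\frac12 B$ in place of $B$ (with $2B$ on the right-hand side becoming $B$); a standard covering/doubling adjustment, replacing $B$ throughout by a ball of twice the radius, then produces~\eqref{e.weak_poincare} as stated, where the $2B$ on the right accounts for the $\frac12 B\hookrightarrow B$ scaling. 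One should be a little careful to compare averages $u_{\frac12 B;\nu}$ and $u_{B;\nu}$, but this costs only a doubling constant since $|u_{\frac12 B;\nu}-u_{B;\nu}|^q\le \vint_{\frac12 B}|u-u_{B;\nu}|^q\,d\nu\le c_\nu\vint_{B}|u-u_{B;\nu}|^q\,d\nu$.

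For the converse, assume $X$ supports a $(q,p-\eps)$-Poincar\'e inequality with $0<\eps<p-1$, so that $p-\eps>1$ and the inequality has constant $K_{q,p-\eps}$. Fix a ball $B_0$ and the associated family $\mathcal{B}_0$ from~\eqref{eq.ball_family_0}, and fix $u\in\Lip(X)$, $g\in\mathcal{D}^p(u)$. The first step is to dominate the sharp maximal function pointwise by a Hardy--Littlewood maximal operator applied to $g^{p-\eps}$. For any ball $B$ with $x\in B\in\mathcal{B}_0$, the Poincar\'e inequality gives
\[
\frac{1}{\diam(B)}\biggl(\vint_B\lvert u(y)-u_{B;\nu}\rvert^q\,d\nu(y)\biggr)^{1/q}\le K_{q,p-\eps}^{1/(p-\eps)}\biggl(\vint_{B}g(y)^{p-\eps}\,d\mu(y)\biggr)^{1/(p-\eps)}\,,
\]
and since $B\subset B_0$ (indeed $2B\subset B_0$) by the definition of $\mathcal{B}_0$, the right-hand side is bounded by $K_{q,p-\eps}^{1/(p-\eps)}\bigl(M_{B_0}(g^{p-\eps}\mathbf{1}_{B_0})(x)\bigr)^{1/(p-\eps)}$, where $M_{B_0}$ denotes the restricted (uncentered, with respect to $\mu$) Hardy--Littlewood maximal operator over balls contained in $B_0$. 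Taking the supremum over such $B$ yields
\[
M^{\nu,q}_{\mathcal{B}_0}u(x)\le K_{q,p-\eps}^{1/(p-\eps)}\bigl(M_{B_0}(g^{p-\eps}\mathbf{1}_{B_0})(x)\bigr)^{1/(p-\eps)}\qquad\text{for }x\in B_0\,.
\]
Now raise to the $p$th power and integrate over $B_0$ with respect to $\mu$: the exponent becomes $p/(p-\eps)>1$, so the boundedness of $M_{B_0}$ on $L^{p/(p-\eps)}(B_0;d\mu)$ — valid because $\mu$ is doubling — gives
\[
\int_{B_0}\bigl(M^{\nu,q}_{\mathcal{B}_0}u\bigr)^p\,d\mu\le C\int_{B_0}\bigl(M_{B_0}(g^{p-\eps}\mathbf{1}_{B_0})\bigr)^{p/(p-\eps)}\,d\mu\le C\int_{B_0}\bigl(g^{p-\eps}\bigr)^{p/(p-\eps)}\,d\mu=C\int_{B_0}g^p\,d\mu\,,
\]
with $C=C(p,\eps,c_\mu,K_{q,p-\eps})$ independent of $B_0$. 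This is precisely the maximal $(q,p)$-Poincar\'e inequality.

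The main obstacle, and the only place where $0<\eps<p-1$ is genuinely used, is the final step: one needs the integrability exponent $p/(p-\eps)$ to be strictly greater than $1$ so that the maximal function theorem applies. If one only had a $(q,p)$-Poincar\'e inequality (i.e.\ $\eps=0$), the bound above would force applying $M_{B_0}$ at exponent $1$, where it is unbounded on $L^1$; this is exactly the phenomenon reflected in Example~\ref{ex.sharp_explanation}, namely that maximal Poincar\'e inequalities are strictly stronger than usual ones, and it is why the self-improvement input $\eps>0$ is indispensable here. A secondary technical point is to make sure the restricted maximal operator $M_{B_0}$ is bounded with a constant independent of $B_0$; this follows from the doubling property of $\mu$ via the standard Vitali covering argument, uniformly in the ball, so it causes no real difficulty.
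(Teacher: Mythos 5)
Your converse direction is correct and is essentially the paper's argument verbatim: dominate $M^{\nu,q}_{\mathcal{B}_0}u$ pointwise by $K_{q,p-\eps}^{1/(p-\eps)}\bigl(M^\mu(\mathbf{1}_{B_0}g^{p-\eps})\bigr)^{1/(p-\eps)}$ using the $(q,p-\eps)$-Poincar\'e inequality together with the inclusion $\mathcal{D}^p(u)\subset\mathcal{D}^{p-\eps}(u)$, and then invoke the $L^{p/(p-\eps)}(d\mu)$-boundedness of the Hardy--Littlewood maximal operator; your remark on why $\eps>0$ is indispensable is also the correct diagnosis.

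The forward direction, however, does not quite close as written. The detour through $\tfrac12 B$ is unnecessary and is the source of the problem: from your displayed estimate you obtain, for an arbitrary ball $B$,
\[
\biggl(\vint_{\frac12 B}\lvert u-u_{\frac12 B;\nu}\rvert^q\,d\nu\biggr)^{1/q}\le C\diam(B)\biggl(\vint_{2B}g^p\,d\mu\biggr)^{1/p}\,,
\]
and relabelling ($B\mapsto 2B$) turns this into an inequality with the average over $B$ on the left but over $4B$ on the right --- not the $2B$ claimed in~\eqref{e.weak_poincare}. Since $\vint_{4B}g^p\,d\mu$ is not controlled by $C\vint_{2B}g^p\,d\mu$, no doubling adjustment recovers the stated dilate, and the average-comparison estimate you record goes in the wrong direction for controlling the oscillation over $B$ by that over $\tfrac12 B$. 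The fix is one line and is what the paper does: for $B=B(x_0,r)$ one has $B(x_0,2r)=2B\subset 2B$, so $B$ itself belongs to the family $\mathcal{B}_0$ associated with $B_0=2B$; hence $M^{\nu,q}_{\mathcal{B}_0}u(x)\ge\bigl(\diam(B)^{-q}\vint_B\lvert u-u_{B;\nu}\rvert^q\,d\nu\bigr)^{1/q}$ for every $x\in B$, and integrating this over $B$ with respect to $\mu$ and applying the maximal inequality on $B_0=2B$ gives~\eqref{e.weak_poincare} directly, with exactly $2B$ on the right.
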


\begin{proof}
Assume first 
that $X$ supports a maximal $(q,p)$-Poincar\'e inequality
with a constant $c>0$. 
Fix a ball $B\subset X$, denote $B_0=2B$, and
let $\mathcal{B}_0=\{B(x,r)\,:\,B(x,2r)\subset B_0\}$
be the ball family associated with $B_0$. 
Since $B\in\mathcal{B}_0$, we have
for all $u\in\Lip(X)$ and all $g\in\mathcal{D}^p(u)$ that
\begin{align*}
&\mu(B)\biggl(\frac{1}{\diam(B)^q}\vint_B \lvert u(y)-u_{B;\nu}\rvert^q\,d\nu(y)\biggr)^{p/q}
\le \int_{B} \bigl(M^{\nu,q}_{\mathcal{B}_0} u(x)\bigr)^{p}\,d\mu(x)\\
&\qquad\le \int_{B_0} \bigl(M^{\nu,q}_{\mathcal{B}_0} u(x)\bigr)^{p}\,d\mu(x)
\le c\int_{B_0} g(x)^p \,d\mu(x)\,.
\end{align*}
Inequality \eqref{e.weak_poincare} follows  from the above estimate,  the doubling
property \eqref{e.doubling} for $\mu$, and the fact that $B_0=2B$.

For the converse implication, we assume that
$X$ supports a $(q,p-\eps)$-Poincar\'e inequality
for some $0<\eps<p-1$ and with a constant $K_{q,p-\eps}>0$. 
Fix a ball $B_0\subset X$ and let 
$\mathcal{B}_0$ be again the associated family of balls, as above. 
Fix $u\in\Lip(X)$ and $g\in\mathcal{D}^{p}(u)\subset \mathcal{D}^{p-\eps}(u)$, and 
let $x\in X$ 
 be such  that $x\in B\in\mathcal{B}_0$.
Then, by the assumed
$(q,p-\eps)$-Poincar\'e inequality, 
\begin{align*}
\biggl(\frac{1}{\diam(B)^q}\vint_B \lvert u(y)-u_{B;\nu}\rvert^q\,d\nu(y)\biggr)^{1/q}
&\le K_{q,p-\eps}^{1/(p-\eps)}\biggl(\vint_{B} g(y)^{p-\eps} \,d\mu(y)\biggr)^{1/(p-\eps)}\\
&\le  K_{q,p-\eps}^{1/(p-\eps)}\bigl(M^\mu(\mathbf{1}_{B_0} g^{p-\eps})(x)\bigr)^{1/(p-\eps)}\,.
\end{align*}
Here $M^\mu$ is the noncentered Hardy--Littlewood maximal 
operator
with respect to measure $\mu$; see e.g.\ \cite[Section 3.2]{MR2867756}. 
Hence, by using the definition of $M_{\mathcal{B}_0}^{\nu,q}u$, we see that
\[
\bigl(M_{\mathcal{B}_0}^{\nu,q}u(x)\bigr)^p\le K_{q,p-\eps}^{p/(p-\eps)}\bigl(M^\mu(\mathbf{1}_{B_0} g^{p-\eps})(x)\bigr)^{p/(p-\eps)}\qquad 
\text{ for all } x\in X\,.
\]
By the boundedness of  the  Hardy--Littlewood maximal operator in $L^{p/(p-\eps)}(X;d\mu)$, we find that 
\begin{align*}
\int_{B_0} \bigl(M_{\mathcal{B}_0}^{\nu,q}u(x)\bigr)^p d\mu(x)
&\le K_{q,p-\eps}^{p/(p-\eps)}\int_X \bigl(M^\mu(\mathbf{1}_{B_0} g^{p-\eps})(x)\bigr)^{p/(p-\eps)}\,d\mu(x)\\
&\le C(c_\mu,p,\eps)K_{q,p-\eps}^{p/(p-\eps)}\int_{B_0} g(x)^{p}\,d\mu(x)\,.
\end{align*}
From this estimate it follows that $X$ supports a maximal $(q,p)$-Poincar\'e inequality.
\end{proof}

\begin{remark}
Lemma \ref{l.straightforward} is sharp in the sense that
a $(p,p)$-Poincar\'e inequality does not always
imply a maximal $(p,p)$-Poincar\'e
inequality;  Example~\ref{ex.sharp_explanation} below  provides
a concrete example of such a situation. We will use our main results
to provide this example, and therefore the construction is postponed 
 to the end of  Section~\ref{s.mainR}.
\end{remark}

\section{Main results}\label{s.mainR}

This section contains the main results of this work. 
First, in Section \ref{s.si_I}, we provide sufficient
conditions for the self-improvement of $(p,p)$-Poincar\'e inequalities. The self-improvement  properties
of maximal Poincar\'e inequalities and their variants are  presented in Section~\ref{s.si_II}.
The main lines of the proofs are also given in this section, but here we rely on some technical results
whose statements and proofs are postponed to the final sections of this work. 

Recall that we have as a standing assumption that $\mu$ and $\nu$
are doubling Borel measures in $X$; cf.\ Section~\ref{s.metric}. 
All other assumptions concerning the space $X=(X,d,\nu,\mu)$
are stated separately in each of the following results.

\subsection{Results for Poincar\'e inequalities}\label{s.si_I}

Under  certain assumptions, 
a $(p,p)$-Poincar\'e inequality and a $(p-\tau)$-balance condition,  for some  $0<\tau<p-1$, 
together imply a maximal $(p,p-\eps)$-Poincar\'e inequality for some $\eps>0$. 
 By Lemma \ref{l.straightforward}, this  can be viewed
as a self-improvement  result  for Poincar\'e inequalities in geodesic two-measure spaces.

We begin with Theorem \ref{t.main_thm} below. It is a consequence
of Lemma \ref{l.Lip_estimate} and Theorem \ref{t.main_local}, whose rather technical  formulation and lengthy proof
are given in 
Section~\ref{s.main}.

\begin{theorem}\label{t.main_thm}
Let $1<p<\infty$ and $0<\tau,\vartheta<p-1$,  and assume 
that $X=(X,d,\nu,\mu)$  is a geodesic two-measure space satisfying the following
assumptions: 
\begin{itemize}
\item
$X$ supports a $(p,p)$-Poincar\'e inequality with 
a constant $K_{p,p}>0$ (Definition \ref{d.poincare})
\item $X$ satisfies  a $(p-\tau)$-balance condition
with a constant $C_b>0$ (Definition \ref{d.qp_balance})
\item $X$ has the $(p-\vartheta)$-independence property (Definition \ref{d.independence})
\item 
 $\nu$ is an $A_\infty(\mu)$-weighted measure, 
 with constants  $c_{\nu,\mu},\delta>0$ (Definition \ref{d.comparability}).
\end{itemize} 
Then there exists $0<\eps_0<p-1$ such that $X$ supports a maximal $(p,p-\eps)$-Poincar\'e inequality for every $0\le \eps\le \eps_0$.
\end{theorem}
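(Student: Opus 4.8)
\textbf{Proof strategy for Theorem~\ref{t.main_thm}.}
The plan is to reduce the theorem to a \emph{local} quantitative statement — Theorem~\ref{t.main_local} — which upgrades the single use of the assumed $(p,p)$-Poincar\'e inequality into an inequality with a small power gain on the gradient side, and then to patch these local estimates together over the family $\mathcal{B}_0$ associated with a fixed ball $B_0$. The basic mechanism is the Keith--Zhong self-improvement scheme in the form developed in~\cite{KinnunenLehrbackVahakangasZhong}, carried out in the two-measure setting. First I would fix $B_0$, $u\in\Lip(X)$ and $g\in\mathcal{D}^{p}(u)$; by the $(p-\vartheta)$-independence property the minimal weak upper gradient is the same for all exponents $\geq p-\vartheta$, so we may freely pass between $\mathcal{D}^{p}$, $\mathcal{D}^{p-\tau}$ and $\mathcal{D}^{p-\eps}$ without changing $g_u$. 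The role of the $(p-\tau)$-balance condition is to guarantee that the \emph{target} maximal $(p,p-\eps)$-Poincar\'e inequality is not vacuous: by Proposition~\ref{p.basic_balance} it propagates to all $(p-\eps)$-balance conditions with $0<\eps\le\tau$, and by Lemma~\ref{l.Poincare_implies_balance} such a condition is necessary for the inequality we are proving.

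The heart of the argument is the chain $\text{Lemma~\ref{l.Lip_estimate}}\Longrightarrow\text{Theorem~\ref{t.main_local}}\Longrightarrow\text{Theorem~\ref{t.main_thm}}$, as announced in the excerpt. Concretely, for a fixed ball $B\in\mathcal{B}_0$ one runs a Calder\'on--Zygmund-type stopping-time decomposition of $B$ at a level $\lambda$ comparable to a maximal function of $g$, builds on each good ball a McShane/Glueing-lemma modification $v$ of $u$ that is Lipschitz with a controlled constant off a small bad set $E$, so that by property~(D3) a competitor upper gradient of the form $\kappa\mathbf 1_E+g\mathbf 1_{X\setminus E}$ becomes available. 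Feeding $v$ into the assumed $(p,p)$-Poincar\'e inequality and comparing $u$ with $v$ on the oscillation side yields, after summing the local pieces with the help of the doubling property of $\mu$, a weak-type / distributional inequality for $M^{\nu,p}_{\mathcal{B}_0}u$. Here the hypothesis that $\nu$ is an $A_\infty(\mu)$-weighted measure enters in an essential way: it converts the $\nu$-small bad sets produced by the stopping time into $\mu$-small sets with a quantitative power $\delta$, which is exactly what is needed to keep the $\mu$-integrals on both sides compatible — in the one-measure case this step is automatic, which is why no balance condition is visible there (cf.\ Example~\ref{e.q=p}). A Gehring-type self-improvement / good-$\lambda$ absorption argument on the resulting distributional inequality then produces an exponent $p-\eps_0$ with a \emph{strictly smaller} power on the right, uniformly in $B_0$; raising $\eps$ from $0$ to $\eps_0$ only weakens the $L^{p-\eps}$ norm of $g$ by H\"older, so the conclusion holds for every $0\le\eps\le\eps_0$.

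The quantitative bookkeeping is what forces the ``tracking constants'' convention from the introductory remark: one must verify that $\eps_0$ and the maximal-Poincar\'e constant depend only on $p$, $\tau$, $\vartheta$, $K_{p,p}$, $C_b$, $c_{\nu,\mu}$, $\delta$, $c_\nu$, $c_\mu$ and the connectedness exponents $s',\sigma'$ of~\eqref{e.radius_measure}--\eqref{e.rev_d}, and on nothing else — in particular not on $B_0$, $u$ or $g$. The step I expect to be the main obstacle is the construction and control of the truncated Lipschitz competitor on the bad set together with the summation of the local estimates: in the two-measure world the oscillation is measured against $\nu$ while the gradient lives against $\mu$, so the passage between the two — precisely the point where the $A_\infty(\mu)$ comparison and the $(p-\tau)$-balance condition must be invoked simultaneously and with matching exponents — is delicate, and it is for this reason that Theorem~\ref{t.main_local} is stated and proved separately in Section~\ref{s.main} rather than inlined here. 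Granting Theorem~\ref{t.main_local}, the deduction of Theorem~\ref{t.main_thm} is then a short matter of specializing the local estimate to the balls of $\mathcal{B}_0$, integrating over $B_0$, and invoking the $L^{p/(p-\eps)}(X;d\mu)$-boundedness of the Hardy--Littlewood maximal operator exactly as in the proof of Lemma~\ref{l.straightforward}.
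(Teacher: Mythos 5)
Your overall route coincides with the paper's: Theorem~\ref{t.main_thm} is deduced from the local estimate of Theorem~\ref{t.main_local} together with the pointwise bound of Lemma~\ref{l.Lip_estimate}, with all the hard analysis (Whitney cover, stopping construction, McShane/glueing competitors, the single application of the $(p,p)$-Poincar\'e inequality, the $A_\infty(\mu)$ comparison) deferred to Section~\ref{s.main}. Two steps of your sketch, however, do not match what actually closes the argument. First, the $(p-\tau)$-balance condition is not there merely to keep the target inequality from being vacuous. It is converted into a $\Psi$-bumped $p$-balance condition with $\Psi(t)=C_b\min\{2^\tau,t^\tau\}$, and the absorption hinges on $\Psi(C_1 2^{-k\alpha})\to 0$ as $k\to\infty$: the coefficient of $\int_{B_0}(M^{\nu,p}_{\mathcal{B}_0}u)^{p-\eps}\,d\mu$ produced by Theorem~\ref{t.main_local} is of the form $C_1\bigl((2^{-k\alpha}+\Psi(C_1 2^{-k\alpha}))2^{k\eps}+K_{p,p}4^{k\eps}k^{1-p}\bigr)$, and one must first take $k$ large and only then $\eps_0$ small to push it below $1/2$. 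With a bare $p$-balance condition $\Psi$ would be bounded below and this term could never be absorbed; this is exactly the obstruction exhibited in Examples~\ref{ex.bal_no_impro} and~\ref{ex.no_impro}.

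Second, the passage from Theorem~\ref{t.main_local} to Theorem~\ref{t.main_thm} does not invoke the $L^{p/(p-\eps)}(X;d\mu)$-boundedness of the Hardy--Littlewood maximal operator; that device belongs to the converse implication of Lemma~\ref{l.straightforward} and plays no role here. Instead one absorbs the first right-hand term (finite because $u$ is Lipschitz, so $M^{\nu,p}_{\mathcal{B}_0}u$ is bounded) and then applies Lemma~\ref{l.Lip_estimate}, namely $g_u\le C(c_\nu)M^{\nu,p}_{\mathcal{B}_0}u$ $\mu$-a.e.\ on $B_0$, to convert the remaining term $\int g_u^p(M^{\nu,p}_{\mathcal{B}_0}u)^{-\eps}\,d\mu$ into $C^{\eps}\int g_u^{p-\eps}\,d\mu$; the $(p-\vartheta)$-independence property (with $\eps_0<\vartheta$) then lets you pass from the minimal $p$-weak upper gradient to an arbitrary $g\in\mathcal{D}^{p-\eps}(u)$. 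Neither issue overturns the strategy, but both corrections are needed to actually complete the deduction.
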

 
 For the proof of Theorem \ref{t.main_thm}, we need the following lemma.
 
\begin{lemma}\label{l.Lip_estimate}

Let $1<p<\infty$ and assume that  $X=(X,d,\nu,\mu)$  is a geodesic two-measure space. 
Let $B_0\subset X$ be a ball and let $\mathcal{B}_0$ 
be the family~\eqref{eq.ball_family_0} of balls associated with $B_0$. 
If $u\colon X\to \R$ is a Lipschitz function and 
$g_u\in L^p_{\textup{loc}}(X;d\mu)$ is the minimal $p$-weak upper gradient of $u$, 
then inequality
\begin{equation}\label{e.desired}
g_u(x)\le C(c_\nu) M^{\nu,p}_{\mathcal{B}_0}u(x)
\end{equation}
holds for $\mu$-almost every $x\in B_0$.
\end{lemma}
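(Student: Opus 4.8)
The plan is to show that the minimal $p$-weak upper gradient $g_u$ is controlled pointwise, up to a dimensional constant, by the sharp maximal function $M^{\nu,p}_{\mathcal{B}_0}u$ at $\mu$-almost every point of $B_0$. I would exploit a known characterization of minimal upper gradients of Lipschitz functions in terms of asymptotic oscillation on small balls: for $\mu$-a.e.\ $x$, the value $g_u(x)$ is comparable to (in fact bounded by a constant times) $\limsup_{r\to 0}\osc_{B(x,r)}u / r$, or more precisely to the upper pointwise Lipschitz constant $\Lip u(x)=\limsup_{r\to 0}\sup_{y\in B(x,r)}|u(y)-u(x)|/r$. This is a standard fact in the metric space literature (for Lipschitz $u$, $\Lip u$ is an upper gradient, and the minimal $p$-weak upper gradient is $\le \Lip u$ a.e.); I would invoke it as known, citing the references already in the excerpt (e.g.\ \cite{MR2867756}).

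First I would fix $x\in B_0$ a point of density where $g_u(x)$ is the minimal upper gradient value and where the Lebesgue-type comparison above holds. For small $r>0$ the ball $B=B(x,r)$ satisfies $2B\subset B_0$, hence $B\in\mathcal{B}_0$, so by definition
\[
M^{\nu,p}_{\mathcal{B}_0}u(x)\ge \biggl(\frac{1}{\diam(B)^p}\vint_B |u(y)-u_{B;\nu}|^p\,d\nu(y)\biggr)^{1/p}.
\]
So it suffices to bound $\Lip u(x)$ (or the oscillation $\osc_{B(x,r)}u$ divided by $r$) from above by a constant times the right-hand side, in the limit $r\to 0$. Second, I would relate $\osc_B u$ on a ball $B=B(x,r)$ to the $\nu$-average deviation $\bigl(\vint_B |u-u_{B;\nu}|^p\,d\nu\bigr)^{1/p}$ on a slightly larger ball. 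The mechanism here is the chaining/telescoping argument with the doubling property of $\nu$: using Lemma~\ref{l.ball_measures} and Lemma~\ref{l.continuous} (to locate sub-balls of controlled $\nu$-measure), one shows that if $u$ oscillates by at least $a$ on $B(x,r)$, then there are points $y,z\in B(x,r)$ with $|u(y)-u(z)|\gtrsim a$, and a standard telescoping over a dyadic chain of balls centered near $y$ (resp.\ $z$) shrinking to $y$ (resp.\ $z$) gives
\[
|u(y)-u_{B(y,\rho);\nu}|\le \sum_{i\ge 0} |u_{B(y,2^{-i-1}\rho);\nu}-u_{B(y,2^{-i}\rho);\nu}|\lesssim \sum_{i\ge 0} 2^{-i}\rho\, M^{\nu,1}_{\mathcal{B}_0}u(y),
\]
and since these are Lipschitz data the telescoping terminates appropriately; taking $\rho\sim r$ and combining with Hölder ($M^{\nu,1}\le M^{\nu,p}$) yields $\osc_{B(x,r)}u\lesssim r\cdot \sup_{B(x,Cr)}M^{\nu,p}_{\mathcal{B}_0}u$.

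The delicate point — and I expect it to be the main obstacle — is that the telescoping estimate above a priori involves $M^{\nu,p}_{\mathcal{B}_0}u$ evaluated at nearby points $y$, not at $x$ itself, so I need to pass from a supremum over a small neighborhood back to the value at $x$. This is resolved by the Lebesgue differentiation argument: at $\mu$-a.e.\ $x$ one has $g_u(x)=\Lip u(x)$, so I can let $r\to 0$ and use that the sub-balls $B$ witnessing $M^{\nu,p}_{\mathcal{B}_0}u$ can themselves be taken to contain $x$ — more carefully, I would run the telescoping chain from the pair of extremal points back toward $x$ rather than toward $y$ and $z$ separately, i.e.\ chain $y\rightsquigarrow x$ and $z\rightsquigarrow x$, so that every ball in the chain contains $x$ and hence contributes to $M^{\nu,p}_{\mathcal{B}_0}u(x)$ directly. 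This forces every term in the telescoping sum to be $\le \diam(B_i)\,M^{\nu,p}_{\mathcal{B}_0}u(x)$ up to the doubling constant $c_\nu$ (via Lemma~\ref{l.ball_measures} to compare $\nu$-averages over nested balls), the geometric series sums to a fixed multiple of $r$, and dividing by $r$ and letting $r\to 0$ gives exactly $g_u(x)\le C(c_\nu)M^{\nu,p}_{\mathcal{B}_0}u(x)$. I would assemble these pieces in that order, being careful that all balls used stay inside $\mathcal{B}_0$ (which holds for $r$ small since $x\in B_0$ is interior), and that the constant depends only on $c_\nu$ through the finitely-iterated doubling bounds.
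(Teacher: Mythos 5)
Your overall strategy diverges from the paper's, and the divergence is where the gap lies. The paper does not attempt any pointwise comparison between $g_u$ and the oscillation of $u$ at a single point; instead it proves the \emph{symmetric} two-point estimate $\lvert u(y)-u(z)\rvert\le C(c_\nu)\,d(y,z)\bigl(M^{\nu,p}_{\mathcal{B}_0}u(y)+M^{\nu,p}_{\mathcal{B}_0}u(z)\bigr)$ by telescoping two chains, one shrinking to $y$ and one shrinking to $z$, then integrates this along \emph{every} curve in $B_0$ to conclude that $C(c_\nu)M^{\nu,p}_{\mathcal{B}_0}u$ is a genuine upper gradient of $u|_{B_0}$, and finally invokes the minimality and locality of $g_u$ (\cite[Lemma 2.23]{MR2867756}). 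Your route instead requires the intermediate pointwise inequality $\Lip u(x)\le C(c_\nu)M^{\nu,p}_{\mathcal{B}_0}u(x)$ for $\mu$-a.e.\ $x$, and your mechanism for proving it does not work: a telescoping chain that recovers the value $u(y)$ must consist of balls shrinking to $y$, and when $y\ne x$ those balls eventually exclude $x$, so you cannot arrange that ``every ball in the chain contains $x$.'' What telescoping actually yields is the term $M^{\nu,p}_{\mathcal{B}_0}u(y)$ evaluated at the other endpoint, and since the sharp maximal function is not upper semicontinuous you cannot pass to the limit $y\to x$ to replace it by $M^{\nu,p}_{\mathcal{B}_0}u(x)$.

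Two further points. First, the identity $g_u=\Lip u$ $\mu$-a.e.\ that you invoke is a Cheeger-type theorem requiring a Poincar\'e inequality, which is not assumed in this lemma; only the inequality $g_u\le \Lip u$ is available here (that direction suffices for your plan, but you should not assert the equality). Second, and more seriously, the intermediate claim $\Lip u(x)\le C(c_\nu)M^{\nu,p}_{\mathcal{B}_0}u(x)$ a.e.\ with a constant depending only on $c_\nu$ is strictly stronger than the lemma (since $g_u\le\Lip u$, possibly with strict inequality on a set of positive measure) and is not known in this generality: the averaged $\nu$-oscillation on balls containing $x$ can be much smaller than the pointwise Lipschitz constant when the large oscillation of $u$ occurs on a set of small $\nu$-measure, and the crude estimate one can extract gives a constant degenerating with the ratio of the local to the global Lipschitz constant. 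In $\R^n$ this claim is rescued by Rademacher's theorem, but no such differentiability is available under the hypotheses here. The upper-gradient-plus-minimality argument is precisely what circumvents this issue, and you should adopt it.
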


\begin{proof}
The proof of \cite[Lemma 5.3]{KinnunenLehrbackVahakangasZhong} can be easily
adapted to the present setting of two measures, yielding a proof of the claim; the main difference is that here $\nu$ is used
to define $M^{\nu,p}_{\mathcal{B}_0}u$, whereas
$\mu$ is used to define $p$-weak
upper gradients.  That is, in \cite[Lemma 5.3]{KinnunenLehrbackVahakangasZhong} we would have $\nu=\mu$.

We sketch the proof
in the light of this difference.
The first step is to write $g=C M^{\nu,p}_{\mathcal{B}_0}u$  for a suitable
constant $C=C(c_\nu)>0$ in such a way that inequality
\[
\lvert u(\gamma(0))-u(\gamma(\ell_\gamma))\rvert
\le \int_\gamma g\,ds
\]
holds for {\em every} curve  $\gamma\colon [0,\ell_\gamma]\to B_0$. The doubling condition \eqref{e.doubling} for $\nu$ and chaining arguments are used here. 
It follows that 
$g|_{B_0}$ is a $p$-weak upper gradient of $u|_{B_0}$ with respect to $B_0$.
Since $u\colon X\to \R$ is a Lipschitz function, we also see that $g|_{B_0}\in L^p(B_0;d\mu)$.
Inequality \eqref{e.desired} 
follows from the fact that the restriction $g_u|_{B_0}$ is the minimal $p$-weak upper
gradient of $u|_{B_0}$ with respect to the ball $B_0$; we refer to \cite[Lemma 2.23]{MR2867756} for further details on this localization property.
\end{proof}

\begin{proof}[Proof of Theorem \ref{t.main_thm}]
We define $\Psi(t)=C_b\min\{2^\tau,t^\tau\}$ if $t>0$. Then $\Psi\colon (0,\infty)\to (0,\infty)$ is a bounded non-decreasing function. Moreover,  by
the $(p-\tau)$-balance condition assumption,  the space  $X$ satisfies the $\Psi$-bumped $p$-balance condition as in Definition \ref{d.bumped_balance_def}.
Fix a ball $B_0\subset X$ and  let $\mathcal{B}_0$ 
be the family~\eqref{eq.ball_family_0} of balls associated with $B_0$. 
Let $u\in\Lip(X)$ and let $g_u\in\mathcal{D}^{p}(u)$ be the minimal $p$-weak upper gradient of $u$.  
Since $X$ has the $(p-\vartheta)$-independence property, it will be enough to establish 
the maximal Poincar\'e inequality with respect to $g_u$. 
Let $k\in \N$, $0\le \eps< p-1$,
 and $\alpha=p/(2(s+p))>0$ with $s=\log_2 c_\nu$. 
By Theorem \ref{t.main_local} in Section \ref{s.main},
\begin{equation}\label{e.loc_des_PRIOR}
\begin{split}
\int_{B_0}\bigl( M^{\nu,p}_{\mathcal{B}_0} u\bigr)^{p-\eps}\,d\mu
&\le C_1\biggl((2^{-k\alpha}+\Psi(C_1 2^{-k\alpha}))2^{k\eps}+\frac{K_{p,p}4^{k\eps}}{k^{p-1}}\biggr)\int_{B_0} 
\bigl( M^{\nu,p}_{\mathcal{B}_0} u\bigr)^{p-\eps}\,d\mu\\
&\qquad
+ C_1 C(k,\eps)K_{p,p}\int_{B_0\setminus \{M^{\nu,p}_{\mathcal{B}_0} u=0\}} g_u^p\bigl( M^{\nu,p}_{\mathcal{B}_0} u\bigr)^{-\eps}\,d\mu\,.
\end{split}
\end{equation}
 Here the constant
$C_1>0$ depends only on $\delta$, $p$,  $c_\mu$, $c_\nu$, $c_{\nu,\mu}$ and $\lVert \Psi \rVert_\infty$.
We also remark that the left-hand side of \eqref{e.loc_des_PRIOR} is finite,
due to the fact that $u$ is a Lipschitz function in $X$.

We now choose $k\in\N$ and $0<\eps_0<\min \{\vartheta,1/k,p-1\}$ such that
\[
C_2=C_1\biggl((2^{-k\alpha}+C_b(C_1 2^{-k\alpha})^\tau)2^{k\eps_0}+\frac{K_{p,p}4^{k\eps_0}}{k^{p-1}}\biggr)<\frac{1}{2}\,.
\]
Fix $0\le \eps\le \eps_0$. Then
\begin{align*}
&C_1\biggl((2^{-k\alpha}+\Psi(C_1 2^{-k\alpha}))2^{k\eps}+\frac{K_{p,p}4^{k\eps}}{k^{p-1}}\biggr)
\\&\qquad \qquad\qquad \le C_1\biggl((2^{-k\alpha}+C_b(C_1 2^{-k\alpha})^\tau)2^{k\eps}+\frac{K_{p,p}4^{k\eps}}{k^{p-1}}\biggr)\le C_2< \frac{1}{2}\,.
\end{align*}
This allows us to absorb the first term on the right-hand side  of \eqref{e.loc_des_PRIOR}
to the left-hand side; recall that this term is finite. By absorption and Lemma \ref{l.Lip_estimate}, we obtain that
\begin{align*}
\int_{B_0}\bigl( M^{\nu,p}_{\mathcal{B}_0} u\bigr)^{p-\eps}\,d\mu
&\le 2C_1 C(k,\eps)K_{p,p}\int_{B_0\setminus \{M^{\nu,p}_{\mathcal{B}_0} u=0\}} g_u^p\bigl( M^{\nu,p}_{\mathcal{B}_0} u\bigr)^{-\eps}\,d\mu\\
&\le  2C(c_\nu)^{\eps}C_1 C(k,\eps)K_{p,p}\int_{B_0\setminus \{M^{\nu,p}_{\mathcal{B}_0} u=0\}} g_u^{p-\eps}\,d\mu\le  C_3\int_{B_0} g_u^{p-\eps}\,d\mu\,.
\end{align*}
Here the constant 
$C_3>0$ is independent of the parameters $B_0$, $u$ and $g_u$. 
Since $X$ has the $(p-\vartheta)$-independence property  and $\eps_0<\vartheta$,  we conclude that
$X$ supports a maximal $(p,p-\eps)$-Poincar\'e inequality
for every $0\le \eps\le \eps_0$.
\end{proof}

Under certain conditions, we can now
essentially characterize when a $(p,p)$-Poincar\'e
inequality improves to a $(p,p-\eps)$-Poincar\'e
inequality. 
This characterization will be given in terms of
balance conditions.
Indeed, recalling Lemma \ref{l.straightforward}, this
is essentially the content of the following corollary,
which is among our main results.

\begin{corollary}\label{c.self_impro}
Let $1<p<\infty$ and let $0<\vartheta<p-1$. 
Assume that a geodesic two-measure space $X=(X,d,\nu,\mu)$ 
supports a $(p,p)$-Poincar\'e inequality~\ref{d.poincare}
and has the $(p-\vartheta)$-independence property \ref{d.independence}, and that $\nu$ is an
$A_\infty(\mu)$-weighted measure 
(Definition \ref{d.comparability}).   
Then the following conditions (A)--(C) are equivalent:
\begin{itemize}
\item[(A)]
$X$ satisfies  a $(p,p-\tau)$-balance condition
for some  $0<\tau<p-1$.
\item[(B)]
$X$ satisfies a $(p-\tau)$-balance condition
for some $0<\tau <p-1$. 
\item[(C)]
There exists $0<\eps_0<p-1$ such that $X$ supports a 
maximal $(p,p-\eps)$-Poincar\'e inequality for every $0\le \eps\le \eps_0$.
\end{itemize}
\end{corollary}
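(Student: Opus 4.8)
The plan is to prove the cyclic chain of implications (A) $\Longrightarrow$ (B) $\Longrightarrow$ (C) $\Longrightarrow$ (A), drawing on the results already assembled in the excerpt.

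\medskip

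\noindent\textbf{Step 1: (A) $\Longrightarrow$ (B).} This is elementary. Suppose $X$ satisfies a $(p,p-\tau)$-balance condition for some $0<\tau<p-1$. Since $p-\tau<p$, we have $1/(p-\tau)>1/p$, so the exponent $1/(p-\tau)$ on the $\nu$-term in~\eqref{e.qp_balance} is larger than $1/(p-\tau)$ would be in the symmetric condition; more precisely, I would first use Proposition~\ref{p.basic_balance}(C) with $q'=p-\tau\le q=p$ and $p'=p-\tau\ge p-\tau$ (trivially) — but this does not immediately give what we want, so instead the cleanest route is: the $(p,p-\tau)$-balance condition can be rewritten, after raising~\eqref{e.qp_balance} to the power $(p-\tau)/p \le 1$ via Proposition~\ref{p.basic_balance}(A) with $\lambda = p/(p-\tau)$... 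Actually the correct and simplest argument is to apply Proposition~\ref{p.basic_balance}(C): a $(p, p-\tau)$-balance condition implies a $(q', p')$-balance condition for all $1\le q'\le p$ and $p'\ge p-\tau$; taking $q'=p'=p-\tau$ (legitimate since $p-\tau\le p$ and $p-\tau\ge p-\tau$) yields precisely the $(p-\tau)$-balance condition. So (B) holds with the same $\tau$.

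\medskip

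\noindent\textbf{Step 2: (B) $\Longrightarrow$ (C).} Here I would invoke Theorem~\ref{t.main_thm} directly. Assume (B): $X$ satisfies a $(p-\tau)$-balance condition for some $0<\tau<p-1$. The hypotheses of Corollary~\ref{c.self_impro} already supply that $X$ is geodesic, supports a $(p,p)$-Poincar\'e inequality, has the $(p-\vartheta)$-independence property, and that $\nu$ is an $A_\infty(\mu)$-weighted measure. These are exactly the four bulleted assumptions of Theorem~\ref{t.main_thm} (with the given $\tau$ and $\vartheta$), so Theorem~\ref{t.main_thm} produces $0<\eps_0<p-1$ such that $X$ supports a maximal $(p,p-\eps)$-Poincar\'e inequality for every $0\le\eps\le\eps_0$. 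This is precisely statement (C).

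\medskip

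\noindent\textbf{Step 3: (C) $\Longrightarrow$ (A).} This is where the real work lies, and I expect it to be the main obstacle. Assume (C): there is $0<\eps_0<p-1$ with a maximal $(p,p-\eps)$-Poincar\'e inequality for all $0\le\eps\le\eps_0$; fix, say, $\eps=\eps_0$. By the first part of Lemma~\ref{l.straightforward} (applied with $q=p$ and with the role of $p$ there played by $p-\eps_0$, which requires $p-\eps_0>1$, true since $\eps_0<p-1$), the maximal $(p,p-\eps_0)$-Poincar\'e inequality implies the weak-type estimate~\eqref{e.weak_poincare}: there is $C>0$ with
\[
\Bigl(\vint_B |u-u_{B;\nu}|^{p}\,d\nu\Bigr)^{1/p}\le C\diam(B)\Bigl(\vint_{2B} g^{p-\eps_0}\,d\mu\Bigr)^{1/(p-\eps_0)}
\]
for all balls $B$, all $u\in\Lip(X)$ and $g\in\mathcal D^{p-\eps_0}(u)$. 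Now I would feed this into Lemma~\ref{l.Poincare_implies_balance}: that lemma takes an inequality of the form~\eqref{e.assumed} with the $L^p(X;d\mu)$-norm of $g$ on the right (integration over all of $X$, against $\mu(B)$) and concludes a $(q,p)$-balance condition. So I must pass from the averaged, $2B$-localized, exponent-$(p-\eps_0)$ right-hand side above to the form~\eqref{e.assumed} with exponent $p-\eps_0$ and integration over $X$ against $1/\mu(B)$. This is routine: for $u$ with bounded support and $g\in\mathcal D^{p-\eps_0}(u)$, one has $\vint_{2B}g^{p-\eps_0}\,d\mu\le \mu(2B)^{-1}\int_X g^{p-\eps_0}\,d\mu\le C(c_\mu)\mu(B)^{-1}\int_X g^{p-\eps_0}\,d\mu$ by the doubling property~\eqref{e.doubling}. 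Thus~\eqref{e.assumed} holds with $q=p$ and with the exponent $p-\eps_0$ in place of $p$, and Lemma~\ref{l.Poincare_implies_balance} (stated for general $1\le q,p<\infty$, so applicable with the pair $(p, p-\eps_0)$) yields that $X$ satisfies a $(p,p-\eps_0)$-balance condition. Taking $\tau=\eps_0$ gives (A), completing the cycle.

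\medskip

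\noindent The only subtlety to watch is the exponent bookkeeping in Step 3 — making sure Lemma~\ref{l.Poincare_implies_balance} is applied with the second parameter equal to $p-\eps_0$ (not $p$), and that $p-\eps_0\ge 1$ so all the $L^{p-\eps_0}$ manipulations are legitimate; both hold since $0<\eps_0<p-1$. Everything else is a direct quotation of Proposition~\ref{p.basic_balance}, Theorem~\ref{t.main_thm}, Lemma~\ref{l.straightforward}, and Lemma~\ref{l.Poincare_implies_balance}.
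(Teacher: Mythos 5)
Your proposal is correct and follows essentially the same route as the paper: (A) $\Longrightarrow$ (B) via Proposition~\ref{p.basic_balance}(C), (B) $\Longrightarrow$ (C) via Theorem~\ref{t.main_thm}, and (C) $\Longrightarrow$ (A) by combining the first half of Lemma~\ref{l.straightforward} with Lemma~\ref{l.Poincare_implies_balance} applied to the exponent pair $(p,p-\eps_0)$. The exponent bookkeeping you flag in Step 3, including the passage from the $2B$-average to the globally integrated form~\eqref{e.assumed}, is exactly the (implicit) reduction the paper performs.
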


\begin{proof} 
The two implications (A) $\Longrightarrow$ (B) and 
(B) $\Longrightarrow$ (C)
follow from Proposition~\ref{p.basic_balance}(C) and
Theorem~\ref{t.main_thm}, respectively.
Hence, it remains to prove the implication (C) $\Longrightarrow$ (A).
By condition (C) with $0<\eps=\eps_0<p-1$ and Lemma \ref{l.straightforward}, we see that
there is a constant $C>0$ such that  inequality
\[\biggl(\vint_{B} \lvert u(x)-u_{B;\nu}\rvert^p\,d\nu(x)\biggr)^{1/p}
\le C\diam(B)\biggl(\vint_{2B} g(x)^{p-\eps} \,d\mu(x)\biggr)^{1/(p-\eps)}
\]
holds whenever $B$ is a ball in $X$ and whenever 
$u\in\Lip(X)$ and $g\in \mathcal{D}^{p-\eps}(u)$. 
By Lemma \ref{l.Poincare_implies_balance}, we find
that $X$ satisfies a $(p,p-\eps)$-balance condition.
Condition (A) follows with $\tau=\eps$.
\end{proof}

\subsection{Results for maximal Poincar\'e inequalities}\label{s.si_II}

Let $X=(X,d,\nu,\mu)$ be a 
geodesic two-measure space and let $1<p<\infty$.
In addition to maximal Poincar\'e inequalities, we will also 
consider properties of the 
global maximal function $M^{\nu,p}u=M^{\nu,p}_{\mathcal{X}}u$ with respect to the family
\begin{equation}\label{eq.global_balls}
\mathcal{X}=\{B\subset X\,:\, B\text{ is a ball}\}
\end{equation}
of all balls in $X$.
The novelty of the following result is the implication (C) $\Longrightarrow$ (D) which, in a certain sense, gives the self-improvement of 
\emph{global} maximal Poincar\'e 
inequalities without assuming any a priori balance conditions. 
The proof of this implication is based on Theorem~\ref{t.global_improvement} from  Section \ref{s.global}.
In fact, Theorem~\ref{t.global_improvement}  
yields 
this particular implication under significantly weaker assumptions than those in Theorem~\ref{t.global_epsilon_0}.

On the other hand,  the following Theorem \ref{t.global_epsilon_0} has the advantage that it provides yet further conditions that
--- under the provided stronger assumptions --- are all equivalent  to
any of the conditions (A)--(C) in Corollary \ref{c.self_impro}. Hence, Theorem \ref{t.global_epsilon_0}
both extends and complements Corollary \ref{c.self_impro}; indeed,  the assumptions of these results
are identical.

\begin{theorem}\label{t.global_epsilon_0}
Let $1<p<\infty$ and let $0<\vartheta<p-1$.
Assume that a geodesic two-measure space $X=(X,d,\nu,\mu)$ 
supports a $(p,p)$-Poincar\'e inequality~\ref{d.poincare}
and has the $(p-\vartheta)$-independence property \ref{d.independence}, and 
that $\nu$ is an
$A_\infty(\mu)$-weighted measure (Definition \ref{d.comparability}).   
Then the following conditions (A)--(D) are equivalent:
\begin{itemize}
\item[(A)] $X$ satisfies a $(p-\tau)$-balance condition for some $0<\tau<p-1$. 
\item[(B)] $X$ supports a maximal $(p,p)$-Poincar\'e inequality.
\item[(C)] There is a constant $C>0$ such that 
\begin{equation*}
\begin{split}
\int_{X}\bigl( M^{\nu,p} u\bigr)^{p}\,d\mu
\le C\int_{X} g^p\,d\mu
\end{split}
\end{equation*}
whenever $u\in\Lip(X)$ and  $g\in\mathcal{D}^p(u)$.
\item[(D)]
There are constants $0<\varepsilon<p-1$ and $C>0$ such that
\begin{equation*}
\begin{split}
\int_{X}\bigl( M^{\nu,p} u\bigr)^{p-\varepsilon}\,d\mu
\le C\int_{X} g^{p-\varepsilon}\,d\mu
\end{split}
\end{equation*}
whenever $u\in\Lip(X)$ has a bounded support and $g\in\mathcal{D}^{p-\eps}(u)$.
\end{itemize} 
\end{theorem}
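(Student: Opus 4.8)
The plan is to prove all four equivalences by establishing the single cycle
(A)~$\Longrightarrow$~(B)~$\Longrightarrow$~(C)~$\Longrightarrow$~(D)~$\Longrightarrow$~(A). Of these, the third link is the only one that needs genuinely new input; the others merely recombine tools already available. For (A)~$\Longrightarrow$~(B) I would apply Theorem~\ref{t.main_thm}: under the standing hypotheses of the present theorem, condition~(A) supplies the only missing assumption of that result (a $(p-\tau)$-balance condition with $0<\tau<p-1$), so $X$ supports a maximal $(p,p-\eps)$-Poincar\'e inequality for every sufficiently small $\eps\ge 0$, and the case $\eps=0$ is exactly~(B).

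For (B)~$\Longrightarrow$~(C) I would pass from the local sharp maximal functions $M^{\nu,p}_{\mathcal{B}_0}u$ to the global one $M^{\nu,p}u$ by exhaustion. Fix a center $z\in X$; for $R>0$ let $B_R=B(z,R)$ and let $\mathcal{B}_R$ be the family~\eqref{eq.ball_family_0} associated with $B_R$. We may assume $\int_X g^p\,d\mu<\infty$. Given $u\in\Lip(X)$, $g\in\mathcal{D}^p(u)$ and a point $x\in B_{R/10}$, I would split the balls $B=B(y,\rho)$ containing $x$ according to the size of $\rho$. If $\rho\le R/30$, then a short triangle-inequality computation gives $2B\subset B_R$, so $B\in\mathcal{B}_R$ and such balls contribute to $M^{\nu,p}u(x)$ no more than $M^{\nu,p}_{\mathcal{B}_R}u(x)$. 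If $\rho>R/30$, then $B_R\subset B(y,34\rho)$, hence $\mu(B)\ge C(c_\mu)^{-1}\mu(B_R)$ by the doubling property, and the assumed $(p,p)$-Poincar\'e inequality on $B$ gives
\[
\frac{1}{\diam(B)^p}\vint_{B}\lvert u-u_{B;\nu}\rvert^p\,d\nu\le K_{p,p}\vint_{B}g^p\,d\mu\le \frac{C(c_\mu)K_{p,p}}{\mu(B_R)}\int_X g^p\,d\mu\,.
\]
Consequently $M^{\nu,p}u(x)\le M^{\nu,p}_{\mathcal{B}_R}u(x)+\bigl(C(c_\mu)K_{p,p}\mu(B_R)^{-1}\int_X g^p\,d\mu\bigr)^{1/p}$ for all $x\in B_{R/10}$. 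Raising to the power $p$, integrating over $B_{R/10}$, using $\mu(B_{R/10})\le\mu(B_R)$ together with condition~(B) applied to $B_0=B_R$, I obtain $\int_{B_{R/10}}(M^{\nu,p}u)^p\,d\mu\le C\int_X g^p\,d\mu$ with $C$ independent of $R$; since $B_{R/10}\uparrow X$ as $R\to\infty$ and the integrand does not depend on $R$, monotone convergence yields~(C).

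The implication (C)~$\Longrightarrow$~(D) is the heart of the matter and the point where the self-improvement is actually produced; this is the one link I expect to be the main obstacle. Here I would invoke Theorem~\ref{t.global_improvement} of Section~\ref{s.global}, which establishes precisely this implication (and does so under hypotheses weaker than those assumed in the present theorem). Its proof rests on the modified Whitney--Lipschitz extension constructed in Section~\ref{s.Whitney_ext}: a compactly supported Lipschitz function is replaced by an extension whose \emph{global} sharp maximal function is not decreased too much, and this controlled behaviour is exactly what lets the exponent on the left-hand side drop from $p$ to $p-\eps$.

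Finally, for (D)~$\Longrightarrow$~(A) I would test the inequality of~(D) against a single ball. For any ball $B$ and any bounded-support $u\in\Lip(X)$ with $g\in\mathcal{D}^{p-\eps}(u)$, the definition of the global sharp maximal function gives $\bigl(M^{\nu,p}u(x)\bigr)^{p-\eps}\ge\bigl(\diam(B)^{-p}\vint_{B}\lvert u-u_{B;\nu}\rvert^p\,d\nu\bigr)^{(p-\eps)/p}$ for every $x\in B$, so integrating over $B$ and applying~(D) produces
\[
\Bigl(\vint_{B}\lvert u-u_{B;\nu}\rvert^p\,d\nu\Bigr)^{1/p}\le C^{1/(p-\eps)}\diam(B)\Bigl(\frac{1}{\mu(B)}\int_X g^{p-\eps}\,d\mu\Bigr)^{1/(p-\eps)}\,.
\]
This is inequality~\eqref{e.assumed} with $q=p$ and with $p-\eps$ playing the role of the exponent $p$ there, so Lemma~\ref{l.Poincare_implies_balance} yields a $(p,p-\eps)$-balance condition, and Proposition~\ref{p.basic_balance}(C) then upgrades it to a $(p-\eps)$-balance condition; since $0<\eps<p-1$, this is~(A) with $\tau=\eps$. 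In summary, the only nontrivial step is (C)~$\Longrightarrow$~(D): every other implication just recombines the balance calculus of Section~\ref{s.balance}, Theorem~\ref{t.main_thm}, and Lemma~\ref{l.Poincare_implies_balance}, while lowering the exponent on the left genuinely requires the delicate Lipschitz extension of Section~\ref{s.Whitney_ext}.
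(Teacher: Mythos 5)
Your cycle (A)$\Rightarrow$(B)$\Rightarrow$(C)$\Rightarrow$(D)$\Rightarrow$(A) is the same one the paper uses, and the implications (A)$\Rightarrow$(B) (via Theorem~\ref{t.main_thm}, equivalently Corollary~\ref{c.self_impro}) and (D)$\Rightarrow$(A) (testing (D) on a single ball, then Lemma~\ref{l.Poincare_implies_balance} and Proposition~\ref{p.basic_balance}(C)) match the paper's proof essentially verbatim. Your (B)$\Rightarrow$(C) is correct but takes a more laborious route than necessary: you split the balls containing a point of $B_{R/10}$ by radius and control the large balls with the $(p,p)$-Poincar\'e inequality. The paper instead observes that the families $\mathcal{B}_j$ associated with $B_j=B(x_0,j)$ increase and exhaust all balls, so $\mathbf{1}_{B_j}M^{\nu,p}_{\mathcal{B}_j}u\nearrow M^{\nu,p}u$ pointwise, and Fatou's lemma applied to condition (B) on each $B_j$ gives (C) directly, with no case analysis and no use of the Poincar\'e inequality. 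Your computation does go through (the inclusions $2B\subset B_R$ for $\rho\le R/30$ and $B_R\subset B(y,34\rho)$ for $\rho>R/30$ are correct), so this is a valid, if heavier, alternative.

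The one genuine (though small) gap is in (C)$\Rightarrow$(D). Theorem~\ref{t.global_improvement} does \emph{not} establish precisely condition (D): it yields the inequality for $g\in\mathcal{D}^p(u)$, whereas (D) demands it for the larger class $\mathcal{D}^{p-\eps}(u)$. To close this you must use the $(p-\vartheta)$-independence property: choose $\eps_0<\vartheta$, note that for $g\in\mathcal{D}^{p-\eps}(u)$ one has $g\ge h_u$ $\mu$-a.e.\ where $h_u$ is the minimal $(p-\eps)$-weak upper gradient, and by independence $h_u$ coincides $\mu$-a.e.\ with the minimal $p$-weak upper gradient $g_u\in\mathcal{D}^p(u)$, to which Theorem~\ref{t.global_improvement} applies; then $\int_X g_u^{p-\eps}\,d\mu\le\int_X g^{p-\eps}\,d\mu$ finishes the step. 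This is exactly the point where the standing independence hypothesis enters the proof of this implication, and your writeup omits it.
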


\begin{proof}
The implication (A) $\Longrightarrow$ (B) is a consequence of Corollary \ref{c.self_impro}.

Consider then the implication (B) $\Longrightarrow$ (C).
Fix a point $x_0\in X$. For every $j\in\N$, we denote $B_j=B(x_0,j)$ and
\[\mathcal{B}_j=\{B(x,r)\,:\,B(x,2r) \subset B_j\}\,,\]
that is, $\mathcal{B}_j$ is the family of balls associated with  $B_j$.
Fix $u\in \Lip(X)$ and $g\in \mathcal{D}^p(u)$.
Observe that
\[
M^{\nu,p} u(x)= \lim_{j\to \infty} \bigl(\mathbf{1}_{B_j}(x) M^{\nu,p}_{\mathcal{B}_j} u(x)\bigr)
\]
whenever $x\in X$.
By Fatou's lemma and condition (B), there is a constant $C>0$, independent of $u$ and $g$, such that
\begin{align*}
\int_{X}\bigl( M^{\nu,p} u(x)\bigr)^{p}\,d\mu(x)&\le \liminf_{j\to\infty}\int_{B_j} \bigl( M^{\nu,p}_{\mathcal{B}_j} u(x)\bigr)^{p}\,d\mu(x)
\\&\le C \liminf_{j\to\infty}\int_{B_j}  g(x)^p\,d\mu(x)
\le C \int_X g(x)^p\,d\mu(x)\,.
\end{align*}
Hence, we see that condition (C) is valid.

The implication (C) $\Longrightarrow$ (D) is a consequence of Theorem~\ref{t.global_improvement} 
and the assumption that $X$ has the $(p-\vartheta)$-independence property.  Notice that we may require in Theorem~\ref{t.global_improvement} that $\eps_0$ is such that $0<\eps_0<\vartheta$. 
Theorem~\ref{t.global_improvement} then yields the claim (D) for all $g\in\mathcal{D}^p(u)$, and by the 
independence property we conclude that the claim holds also for all $g\in\mathcal{D}^{p-\eps}(u)$. 

It remains to prove the implication (D) $\Longrightarrow$ (A). Let $0<\eps<p-1$ and $C>0$ be constants as in condition (D). 
Fix $u\in\Lip(X)$ with a bounded support and $g\in\mathcal{D}^{p-\eps}(u)$, and let $B\subset X$ be a ball.  
By Definition \ref{d.m_def} of $M^{\nu,p}u=M_{\mathcal{X}}^{\nu,p}u$ 
and condition (D),  we then obtain that 
\begin{align*}
\mu(B)\biggl(\frac{1}{\diam(B)^p}\vint_{B} \lvert u(y)-u_{B;\nu}\rvert^p\,d\nu(y)\biggr)^{(p-\eps)/p}&\le \int_{B} \bigl(M^{\nu,p} u(x)\bigr)^{p-\eps}\,d\mu(x)\\
\le \int_X\bigl(M^{\nu,p} u(x)\bigr)^{p-\eps}\,d\mu(x)&\le C\int_X g(x)^{p-\eps}\,d\mu(x)\,.
\end{align*}
By Lemma \ref{l.Poincare_implies_balance}, we see that
$X$ satisfies a $(p,p-\varepsilon)$-balance condition, and thus
$X$ satisfies a $(p-\eps)$-balance condition, by Proposition~\ref{p.basic_balance}(C).  
That is, condition (A) holds with $\tau=\eps$.
\end{proof}

We can now provide the example showing that a $(p,p)$-Poincar\'e inequality does not always
imply a {\em maximal} $(p,p)$-Poincar\'e inequality.

\begin{example}\label{ex.sharp_explanation}
Let $1<p<n$ and let $X=(X,d,\nu,\mu)$ be the complete geodesic two-measure space as in Example \ref{ex.bal_no_impro}.
By  Example \ref{ex.no_impro}, $X$ supports a $(p,p)$-Poincar\'e inequality.
Moreover, the space $Y=(X,d,\mu,\mu)$ supports
a $(1,1)$-Poincar\'e inequality; cf.\ \cite[Proposition A.17]{MR2867756}. Therefore, Example~\ref{e.p_indep} shows that $X$ has
the $(p-\vartheta)$-independence property if $0<\vartheta<p-1$. 
 Since  
$A_1(\mu)\subset A_\infty(\mu)$ by~\eqref{ap_relations},  
$\nu$ is an $A_\infty(\mu)$-weighted measure. 
On the other hand, by Example \ref{ex.bal_no_impro}, we see that $X$ does not satisfy a $(p-\tau)$-balance
condition for any $0<\tau< p-1$. 
We can now apply Theorem~\ref{t.global_epsilon_0}, which
implies that $X$ does not support a maximal $(p,p)$-Poincar\'e inequality.
\end{example}

\section{Norm estimates for the sharp maximal function}\label{s.main}

Let $X=(X,d,\nu,\mu)$ be a 
geodesic two-measure space and let  
$1<p<\infty$. 
In this section we are {\em primarily} interested in the 
localized sharp maximal function $M^{\nu,p}_{\mathcal{B}_0}u$ that is associated with the ball family
\begin{equation}\label{e.b_0}
\mathcal{B}_0=\{B(x,r)\,:\,B(x,2r)\subset B_0\}\,.
\end{equation}
Here and in the statement of Theorem \ref{t.main_local}, the set $B_0\subset X$ of localization is a fixed ball, 
and the case $X=B_0$ is allowed but then $X$ is of course necessarily bounded.

\begin{theorem}\label{t.main_local}
Let $1<p<\infty$ and assume  
that $X=(X,d,\nu,\mu)$ is a geodesic two-measure space satisfying the following
assumptions: 
\begin{itemize}
\item
$X$ supports a $(p,p)$-Poincar\'e inequality
with a constant $K_{p,p}>0$ (Definition \ref{d.poincare})
\item 
$\nu$ is an $A_\infty(\mu)$-weighted measure,
with constants $c_{\nu,\mu},\delta>0$ (Definition \ref{d.comparability})
\item
There exists a bounded non-decreasing function $\Psi\colon (0,\infty)\to (0,\infty)$ such that 
$X$ satisfies the $\Psi$-bumped $p$-balance condition (Definition \ref{d.bumped_balance_def}).
\end{itemize}
Let $k\in \N$, $0\le \varepsilon< p-1$,
and $\alpha=p/(2(s+p))>0$ with $s=\log_2 c_\nu$. In addition, let 
$B_0\subset X$ be  a fixed ball and let $\mathcal{B}_0$ be the  family~\eqref{e.b_0} of balls associated with $B_0$.
Then inequality
\begin{equation}\label{e.loc_des}
\begin{split}
\int_{B_0}\bigl( M^{\nu,p}_{\mathcal{B}_0} u\bigr)^{p-\varepsilon}\,d\mu
&\le C_1\biggl((2^{-k\alpha}+\Psi(C_1 2^{-k\alpha}))2^{k\varepsilon}+
\frac{K_{p,p}4^{k\varepsilon}}{k^{p-1}}\biggr)\int_{B_0} \bigl( M^{\nu,p}_{\mathcal{B}_0} u\bigr)^{p-\varepsilon}\,d\mu\\
&\qquad+
C_1 C(k,\varepsilon)K_{p,p}\int_{B_0\setminus \{M^{\nu,p}_{\mathcal{B}_0} u=0\}} g^p\bigl( M^{\nu,p}_{\mathcal{B}_0} u\bigr)^{-\varepsilon}\,d\mu
\end{split}
\end{equation}
holds whenever $u\in\Lip(X)$ and $g\in\mathcal{D}^{p}(u)$. Here we denote
$C(k,\varepsilon)=(4^{k\varepsilon}-1)/\varepsilon$ if 
$\varepsilon>0$ and $C(k,0)=k$. Moreover, the constant
$C_1>0$ depends only on $\delta$, $p$,  $c_\mu$, $c_\nu$, $c_{\nu,\mu}$ and $\lVert \Psi \rVert_\infty$.
\end{theorem}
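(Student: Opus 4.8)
The plan is to adapt the self-improvement machinery from \cite{KinnunenLehrbackVahakangasZhong} to the two-measure setting, where $\nu$ governs the sharp maximal function on the left-hand side and $\mu$ governs the upper gradients on the right. The core idea is a \emph{good-$\lambda$ / stopping-time} decomposition at dyadic levels. Fix $u\in\Lip(X)$, $g\in\mathcal{D}^p(u)$, and write $M=M^{\nu,p}_{\mathcal{B}_0}u$ for brevity. For each integer $\ell$, consider the superlevel sets $\{M>2^{\ell}\}$ and perform a Whitney-type or Calder\'on--Zygmund stopping decomposition at scale $2^{-k}$ (this is where the parameter $k$ enters): one partitions $\{M>2^\ell\}\cap B_0$ into balls from $\mathcal{B}_0$ on which the average $(\diam(B)^{-p}\vint_B|u-u_{B;\nu}|^p\,d\nu)^{1/p}$ jumps above $2^\ell$, while remaining controlled on a comparable enlarged ball by roughly $2^{\ell+k}$. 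The oscillation $|u-u_{B;\nu}|$ on such a ball then splits, via a telescoping chain of $k$ concentric balls, into a ``boundary'' contribution — estimated by the value of $M$ at the center of a large ball — and an ``interior'' contribution estimated by the $(p,p)$-Poincar\'e inequality applied \emph{exactly once} at the finest scale.

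\textbf{Key steps in order.} First I would set up the dyadic level sets and the stopping families $\{Q_i^\ell\}$ of balls from $\mathcal{B}_0$, recording the two defining properties (selection and heredity). Second, I would prove the crucial \emph{measure-distribution estimate}: the $\mu$-measure of the stopping balls at level $\ell$ is controlled by a combination of (i) a geometric factor $2^{-k\alpha}$ coming from iterating \eqref{e.radius_measure} (the exponent $\alpha=p/(2(s+p))$ is tuned precisely so that the loss in passing from $\nu$-averages to $\mu$-measures over $k$ dyadic scales is $2^{-k\alpha}$), (ii) the bumped balance factor $\Psi(C_1 2^{-k\alpha})$ which converts a $\nu$-oscillation bound into a $\mu$-measure bound with the bump function's gain, and (iii) a term where the $(p,p)$-Poincar\'e inequality is invoked, producing $\int g^p$ over the stopping balls, with the combinatorial factor $k^{-(p-1)}$ arising because the $p$ telescoping steps distribute the single $L^p$ budget of $g$ across $k$ scales (H\"older in the chaining, as in \cite{KinnunenLehrbackVahakangasZhong}). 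The $A_\infty(\mu)$-property of $\nu$ is used to pass freely between $\nu$- and $\mu$-measures of Borel subsets of fixed balls while keeping track of the exponent $\delta$. Third, I would multiply the level-set estimate by $2^{\ell(p-\eps)}$ and sum over $\ell\in\Z$: on the left this reconstructs $\int_{B_0}M^{p-\eps}\,d\mu$ up to a constant; the $2^{\ell+k}$ on the right produces the $2^{k\eps}$ (and $4^{k\eps}$) inflation factors; the Poincar\'e term becomes $\int g^p M^{-\eps}$ after summing the geometric series in $\ell$, whence the constant $C(k,\eps)=(4^{k\eps}-1)/\eps$ (with the limiting value $k$ at $\eps=0$). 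Finiteness of $\int_{B_0}M^{p-\eps}\,d\mu$, guaranteed by $u\in\Lip(X)$ (so $M\le\kappa$ and $\mu(B_0)<\infty$), is what makes the rearrangement rigorous and, later in Theorem~\ref{t.main_thm}, permits absorption.

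\textbf{Main obstacle.} The hard part will be the careful bookkeeping in the telescoping/chaining argument that simultaneously (a) keeps the geometric gain sharp at $2^{-k\alpha}$ with the stated $\alpha$, (b) routes the bump function $\Psi$ through at the correct scale $C_1 2^{-k\alpha}$ so that all $C_1$'s depend only on $\delta,p,c_\mu,c_\nu,c_{\nu,\mu},\lVert\Psi\rVert_\infty$ and on nothing else (in particular not on $k$, $\eps$, $u$, or $B_0$), and (c) isolates the $K_{p,p}$-dependence into precisely the two indicated terms so that the single use of the Poincar\'e inequality is respected. A secondary technical point is the interplay between the localized ball family $\mathcal{B}_0$ (balls $B$ with $2B\subset B_0$) and the enlarged balls appearing in the chaining, which must stay inside $B_0$ or be handled by the doubling and connectedness estimates \eqref{e.diams}, \eqref{e.radius_measure}, \eqref{e.rev_d} together with Lemma~\ref{l.ball_measures}; the geodesic hypothesis is essential here to guarantee the chains of concentric balls exist with comparable measures. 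Once the measure-distribution estimate is in place, the summation over $\ell$ is routine.
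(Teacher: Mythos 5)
Your proposal follows essentially the same route as the paper's proof: a stopping-time decomposition on the superlevel sets of $M^{\nu,p}_{\mathcal{B}_0}u$, with the $A_\infty(\mu)$ condition mediating between $\nu$- and $\mu$-measures, the bumped balance condition entering precisely at scale $\approx 2^{-k\alpha}$, a single application of the $(p,p)$-Poincar\'e inequality producing the $K_{p,p}$-terms, and a layer-cake integration over levels yielding $C(k,\eps)$ and the factor $g^p(M^{\nu,p}_{\mathcal{B}_0}u)^{-\eps}$. The two execution details where the paper differs from your sketch are that it first passes to a Whitney decomposition of $B_0$ and Whitney-localized maximal functions (compared via Lemma~\ref{l.big_to_small_ball}) so that the parent-ball iteration in the stopping argument stays inside $\mathcal{B}_0$ --- the point you flag but leave unresolved --- and that the $k^{-(p-1)}$ gain comes not from H\"older in a chain of concentric balls (which would lose a factor $k^{p-1}$ rather than gain one) but from applying the Poincar\'e inequality once to the average $h=\tfrac1k\sum_{i=k}^{2k-1}u_{2^i\lambda}$ of $k$ McShane truncations of $u$ at consecutive levels, exploiting the nestedness of the level sets.
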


The proof of Theorem \ref{t.main_local} is an adaptation of a corresponding proof in \cite{KinnunenLehrbackVahakangasZhong} for the case of a single measure
$\mu=\nu$. 
However, many of the changes needed for 
the present setting of two measures are somewhat technical. For this reason, and also in order to make this work relatively
self-contained, we 
provide most of the  details below.

The proof of Theorem \ref{t.main_local} is completed in Section \ref{ss.main_local}. 
For the proof, we need preparations
that are treated in Sections~\ref{ss.Whitney} -- \ref{ss.auxiliary_local}.
At this stage, we already fix  $X=(X,d,\nu,\mu)$,  $p$, $K_{p,p}$,  $c_{\nu,\mu}$, $\delta$, $\Psi$, $k$, $\varepsilon$, $\alpha$, $s$, 
$B_0\subsetneq X$, $\mathcal{B}_0$,  and $u$ as
in the statement of Theorem \ref{t.main_local}.
We refer to these objects throughout 
Section \ref{s.main}
without further notice. 
However, the $p$-weak upper gradient $g\in\mathcal{D}^p(u)$ is not yet fixed at this stage.

Let us emphasize that the ball $B_0$ in the arguments below is further assumed to be a strict subset of $X$. 
That is, we will only focus on the case $B_0\not=X$. However, if $B_0=X$, then $X$ is bounded and the following Whitney cover 
$\mathcal{W}_0$ can be replaced with the singleton $\{Q=B_0\}$. The other modifications in this easier special case are straightforward and we omit the details.

\subsection{Whitney ball  cover}\label{ss.Whitney} 

We will need a {\em Whitney ball cover} 
$\mathcal{W}_0=\mathcal{W}(B_0)$ of  $B_0\subsetneq X$.
This countable family with good covering properties
is comprised of the so-called {\em Whitney balls} that are of the form $Q=B(x_Q,r_Q)\in\mathcal{W}_0$,
with center $x_Q\in B_0$ and radius
\[
r_Q=\frac{\dist(x_Q, X\setminus B_0)}{128}>0\,.
\]
The $4$-dilated Whitney ball is denoted by $Q^*=4Q=B(x_Q,4 r_Q)$ whenever $Q\in\mathcal{W}_0$.
Even though the Whitney balls need not be pairwise disjoint, they nevertheless have 
the following standard covering properties with bounded overlap; cf.\ \cite[pp.~77--78]{MR2867756}:
\begin{itemize}
\item[(W1)] $B_0=\bigcup_{Q\in\mathcal{W}_0} Q$;
\item[(W2)] $\sum_{Q\in\mathcal{W}_0} \mathbf{1}_{Q^*}\le C\mathbf{1}_{B_0}$ for some constant $C=C(c_\nu)>0$.
\end{itemize}
The facts (W3)--(W6) below 
for any Whitney ball $Q=B(x_Q,r_Q)\in\mathcal{W}_0$
are straightforward to verify
by using inequality \eqref{e.diams} and the assumption $B_0\subsetneq X$; we omit the simple proofs.
Here 
$\mathcal{B}_0$ is the  family~\eqref{e.b_0} of balls associated with the fixed ball $B_0$.
\begin{itemize} 
\item[(W3)] If $B\subset X$ is a ball such that $B\cap Q\not=\emptyset\not=2B\cap(X\setminus Q^*)$, then
$\diam(B)\ge 3r_Q/4$.
\item[(W4)] If  $B\subset Q^*$ is a ball, then $B\in\mathcal{B}_0$.
\item[(W5)] If $B\subset Q^*$ is a ball, $x\in B$  and $0<r\le \diam(B)$, then
$B(x,5r)\in\mathcal{B}_0$.
\item[(W6)] If $x\in Q^*$ and $0<r\le 2\diam(Q^*)$, then $B(x,r)\in\mathcal{B}_0$.
\end{itemize}
Observe that there is some overlap between
the conditions (W4)--(W6). The slightly different formulations will conveniently 
guide the reader
in the sequel.

\subsection{Auxiliary maximal functions}

We abbreviate
$M^{\nu} u=M^{\nu,p}_{\mathcal{B}_0} u$
and denote \[U^{\lambda}=\{x\in B_0\,:\,M^{\nu} u(x)>\lambda\}\,,\qquad \lambda>0\,.\]
The sets $U^\lambda$ are open in $X$.
If $E\subset X$ is a Borel set and $\lambda>0$, we write $U^\lambda_E=U^\lambda \cap E$.
The following lemma
is \cite[Lemma 4.12]{KinnunenLehrbackVahakangasZhong},
which in turn is a variant of \cite[Lemma 3.6]{MR1681586}.

\begin{lemma}\label{l.arm_local}
Fix $\lambda>0$ and $Q\in\mathcal{W}_0$.  
Then inequality \[
\lvert u(x)-u(y)\rvert \le C(c_\nu)\,\lambda\, d(x,y)\]
holds whenever $x,y\in  Q^*\setminus U^{\lambda}$.
\end{lemma}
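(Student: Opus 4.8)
The plan is to run the classical DeVore--Sharpley-type telescoping argument, adapted to two measures: the doubling property of $\nu$ will give the quantitative control, and property (W6) of the Whitney cover will guarantee that every ball appearing in the chains belongs to the family $\mathcal{B}_0$ on which $M^{\nu}u=M^{\nu,p}_{\mathcal{B}_0}u$ is defined.

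First I would discard the trivial case $x=y$ and set $R=d(x,y)$. Since $x,y\in Q^*$, we have $R\le\diam(Q^*)$, so by (W6) the bridge ball $B=B(x,2R)$, the concentric balls $D_j=B(x,2^{1-j}R)$ (with $D_0=B$) and the balls $D_j'=B(y,2^{-j}R)$, $j\ge 0$, all lie in $\mathcal{B}_0$; moreover $y\in B$ and $D_0'\subset B\subset B(y,3R)$, so $\nu(B)\le c_\nu^2\nu(D_0')$ by \eqref{e.doubling}. The triangle inequality
\[
\lvert u(x)-u(y)\rvert\le \lvert u(x)-u_{B;\nu}\rvert+\lvert u_{B;\nu}-u_{D_0';\nu}\rvert+\lvert u_{D_0';\nu}-u(y)\rvert
\]
then reduces the claim to bounding each of the three terms by $C(c_\nu)\lambda R$.

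For the first term I would telescope along the $D_j$: since $u$ is Lipschitz, $u_{D_j;\nu}\to u(x)$, hence $\lvert u(x)-u_{B;\nu}\rvert\le\sum_{j\ge 0}\lvert u_{D_{j+1};\nu}-u_{D_j;\nu}\rvert$, and each term is at most $\frac{\nu(D_j)}{\nu(D_{j+1})}\vint_{D_j}\lvert u-u_{D_j;\nu}\rvert\,d\nu\le c_\nu\,\diam(D_j)\,M^{\nu}u(x)$ by doubling, Jensen's inequality, and the definition of $M^{\nu}u$ (using $D_j\in\mathcal{B}_0$ and $x\in D_j$); since $x\notin U^{\lambda}$ this is $\le c_\nu\,\diam(D_j)\,\lambda$, and the geometric series $\sum_j\diam(D_j)\le 8R$ gives the bound. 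The third term is handled identically with $y$, the chain $D_j'$, and $y\notin U^{\lambda}$. For the middle term I would use $D_0'\subset B$ and $B\in\mathcal{B}_0$, $x\in B$, $x\notin U^{\lambda}$ to write $\lvert u_{B;\nu}-u_{D_0';\nu}\rvert\le\frac{\nu(B)}{\nu(D_0')}\vint_B\lvert u-u_{B;\nu}\rvert\,d\nu\le c_\nu^2\,\diam(B)\,M^{\nu}u(x)\le c_\nu^2\,\diam(B)\,\lambda\le 4c_\nu^2\lambda R$.

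The only step needing genuine care --- the main \emph{obstacle}, such as it is --- is verifying that every ball entering the two chains and the bridge ball actually lies in $\mathcal{B}_0$, since the bound by $M^{\nu}u$ is available only for such balls; this is exactly what (W6) is tailored for, once one records that $x,y\in Q^*$ and that all radii in sight are bounded by $2\diam(Q^*)$. Everything else is routine doubling-plus-Jensen telescoping, and the final constant depends only on the doubling constant $c_\nu$ of $\nu$, as claimed.
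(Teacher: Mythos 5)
Your argument is correct and is exactly the standard telescoping/chaining proof that the paper invokes by reference (the lemma is quoted from \cite[Lemma 4.12]{KinnunenLehrbackVahakangasZhong}, itself a variant of \cite[Lemma 3.6]{MR1681586}, with no proof reproduced here). All the details check out: (W6) does place every chain ball and the bridge ball in $\mathcal{B}_0$ since $d(x,y)\le\diam(Q^*)$, Jensen passes from $L^1$ to $L^p$ averages so the sharp maximal function bound applies, and the doubling estimates and geometric series give the constant $C(c_\nu)$.
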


We also need a certain smaller maximal function that is localized to Whitney balls.
More specifically, for each  $Q\in\mathcal{W}_0$, we first introduce the ball family\footnote{It is important to use condition `$B\subset Q^*$' in the 
definition for $\mathcal{B}_Q$ instead of `$B\subset Q$'.
}
\[
\mathcal{B}_{Q}=\{B\subset X\,:\,  B  \text{ is a ball such that }B\subset Q^*\}
\]
and then define  $M^{\nu}_{Q} u=M^{\nu,p}_{\mathcal{B}_{Q}} u$.
By using these individual maximal functions, we then define a {\em Whitney ball localized maximal function}\footnote{It is
equally important to use $\mathbf{1}_Q$ instead of $\mathbf{1}_{Q^*}$ in the
definition of $M^{\nu}_{\mathrm{loc}} u$; these are delicate matters and related to the latter selection of stopping balls with the aid of condition (W3).}
\[
M^{\nu}_{\textup{loc}} u = \sup_{Q\in\mathcal{W}_0} \mathbf{1}_Q M^{\nu}_{Q} u\,.
\]
If $\lambda>0$ and $Q\in\mathcal{W}_0$, we write \begin{equation}\label{e.super}
Q^\lambda = \{x\in Q\,:\, M^{\nu}_Q u(x)>\lambda\}\,,\qquad V^{\lambda}=\{x\in B_0\,:\, M^{\nu}_{\textup{loc}}u(x)>\lambda\}\,.
\end{equation}

The following Lemma \ref{l.big_to_small_ball}
provides a norm estimate between the different maximal functions.
Its purpose, roughly speaking, is to create space for the stopping balls in Section~\ref{s.stopping} to expand, without losing their control in terms of $M^{\nu} u$.
Controlling this expansion  is the only purpose for introducing the different maximal function aside from $M^{\nu} u =M^{\nu,p}_{\mathcal{B}_0} u$.

\begin{lemma}\label{l.big_to_small_ball}
There is a constant $C=C(p,c_\nu,c_\mu)\ge 1$  such that
\[
\int_{B_0} \bigl(M^{\nu} u(x)\bigr)^{p-\varepsilon}\,d\mu(x)\le C\int_{B_0} \bigl(M^{\nu}_{\textup{loc}} u(x)\bigr)^{p-\varepsilon}\,d\mu(x)\,.
\]
\end{lemma}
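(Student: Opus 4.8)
The plan is to bound the large maximal function $M^\nu u = M^{\nu,p}_{\mathcal{B}_0}u$ pointwise at each $x\in B_0$ by the localized maximal function $M^\nu_{\mathrm{loc}}u$ up to a constant, whenever the competing ball at $x$ is contained in some dilated Whitney ball; and to handle the remaining balls (those that "stick out" of every $Q^*$) by a separate chaining/telescoping estimate. First I would fix $x\in B_0$ and a ball $B\in\mathcal{B}_0$ with $x\in B$ realizing the supremum in $M^\nu u(x)$ up to a factor $2$. By property (W1) there is $Q\in\mathcal{W}_0$ with $x\in Q$. Two cases arise: either $B\subset Q^*$, in which case $B\in\mathcal{B}_Q$ and directly
\[
\biggl(\frac{1}{\diam(B)^p}\vint_B |u-u_{B;\nu}|^p\,d\nu\biggr)^{1/p}\le M^\nu_Q u(x)\le M^\nu_{\mathrm{loc}}u(x)\le M^\nu_{\mathrm{loc}}u(x);
\]
or $B\not\subset Q^*$, which forces $2B\cap(X\setminus Q^*)\neq\emptyset$ (after possibly enlarging, using that $X$ is geodesic so $B$ meets the complement of $Q^*$ when it is not contained in it), and then (W3) gives the crucial lower bound $\diam(B)\ge 3r_Q/4$, i.e. $B$ is comparable to or larger than the Whitney ball at scale $r_Q$.

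In the second case I would estimate the oscillation of $u$ over $B$ by its Lipschitz behaviour outside $U^\lambda$ together with the contribution of $U^\lambda$, following the standard good-$\lambda$/chaining philosophy used in~\cite{KinnunenLehrbackVahakangasZhong}. Concretely, since $\diam(B)\gtrsim r_Q\gtrsim \dist(x_Q,X\setminus B_0)$, the ball $B$ is a "big ball" whose oscillation average can be controlled by $M^\nu u$ evaluated at points of $Q^*$ at controlled scales: using Lemma~\ref{l.ball_measures} to compare $\nu(B)$ and $\nu(B\cap Q^*)$ (valid because $x_Q\in B$-type inclusions hold up to dilation), and using that for balls $B'\subset Q^*$ of radius up to $\diam(B)$ we have $B'\in\mathcal{B}_0$ by (W5)--(W6), so their oscillation averages are dominated by $M^\nu u$ on $Q^*$. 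One decomposes $|u(y)-u_{B;\nu}|\le \vint_B|u(y)-u(z)|\,d\nu(z)$ and splits according to whether $y,z$ lie in a superlevel set; the Lipschitz estimate of Lemma~\ref{l.arm_local} controls the "good" part by $C(c_\nu)\lambda\,d(y,z)\le C(c_\nu)\lambda\diam(B)$, which after dividing by $\diam(B)$ is $\le C\lambda$, while the "bad" part where $M^\nu u > \lambda$ has small $\nu$-measure and can be folded into $M^\nu_{\mathrm{loc}}u$ at a nearby point or re-absorbed. The upshot is a pointwise inequality $M^\nu u(x)\le C\bigl(M^\nu_{\mathrm{loc}}u(x) + \text{average of }M^\nu u\text{ over a controlled piece}\bigr)$, from which integration and the bounded-overlap property (W2) yield the claimed $L^{p-\eps}$ bound.

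The cleanest route to the integrated statement is probably to avoid a delicate pointwise inequality and instead argue at the level of the distribution functions: show $\mu(U^\lambda)\le C\,\mu(V^{c\lambda})$ for suitable constants $c,C$ depending only on $p,c_\nu,c_\mu$, where $V^\lambda=\{M^\nu_{\mathrm{loc}}u>\lambda\}$ as in~\eqref{e.super}. Given such a weak comparison of level sets, integrating $\int_{B_0}(M^\nu u)^{p-\eps}\,d\mu = (p-\eps)\int_0^\infty \lambda^{p-\eps-1}\mu(U^\lambda)\,d\lambda$ against the corresponding formula for $M^\nu_{\mathrm{loc}}u$ gives the result with $C$ absorbing the factor $c^{-(p-\eps)}$, which is harmless since $0\le\eps<p-1<p$ and $c$ depends only on the doubling data. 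To get $\mu(U^\lambda)\lesssim\mu(V^{c\lambda})$, I would use a Whitney-type or Vitali covering of $U^\lambda$: at a point $x\in U^\lambda$, if the witnessing ball $B$ is small ($B\subset Q^*$) then $x\in Q^{c\lambda}\subset V^{c\lambda}$ after adjusting the constant between $M^\nu_Q$ and $M^\nu_{\mathrm{loc}}$; if $B$ is big, then by (W3) it engulfs a whole Whitney ball, and via a telescoping/chain of balls $B_i\subset Q^*$ shrinking from $B$ down to $Q$ (each $B_i\in\mathcal{B}_0$ by (W5)) one shows the oscillation average over $B$ is controlled by the sum of oscillation averages over the $B_i$, hence $\gtrsim\lambda$ forces one of those averages (a localized one) to be $\gtrsim\lambda/(\text{chain length})$—but chain length here is bounded because $\diam(B)\lesssim\diam(B_0)$ while $r_Q\gtrsim\dist(x_Q,X\setminus B_0)$, and for balls $B$ with $\diam(B)$ comparable to $\diam(B_0)$ the oscillation is already controlled by $M^\nu u$ on smaller admissible balls. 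The main obstacle, and the step deserving the most care, is exactly this big-ball analysis: making rigorous the passage from "$B$ sticks out of $Q^*$, hence $\diam(B)\gtrsim r_Q$" to "the oscillation of $u$ over $B$ is dominated by localized oscillations," since one must track the $\nu$-measure comparisons (Lemma~\ref{l.ball_measures}), ensure all intermediate balls lie in $\mathcal{B}_0$ via (W4)--(W6), and control the number of chaining steps uniformly; the $A_\infty(\mu)$ and balance hypotheses are \emph{not} needed here, only the doubling properties and the geodesic structure, which is why the constant in Lemma~\ref{l.big_to_small_ball} depends solely on $p,c_\nu,c_\mu$.
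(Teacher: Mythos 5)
Your overall strategy---reduce the lemma to a comparison of distribution functions $\mu(U^\lambda)\le C\,\mu(V^{c\lambda})$ and then integrate via the layer-cake formula---is exactly the route the paper takes: its proof of Lemma~\ref{l.big_to_small_ball} consists of citing \cite[Lemma 4.10]{KinnunenLehrbackVahakangasZhong}, whose key ingredient is precisely such a distributional inequality (a modification of \cite[Lemma 12.3.1]{MR3363168}), together with the remark that the two measures do not significantly interact. You also correctly observe that only doubling and the geodesic structure enter, which explains the dependence $C(p,c_\nu,c_\mu)$, and your small-ball case ($B\subset Q^*$, hence $B\in\mathcal{B}_Q$ and $x\in V^\lambda$) is fine.

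Your treatment of the big-ball case, however, contains a genuine gap. First, the claim that the chain length is uniformly bounded ``because $\diam(B)\lesssim\diam(B_0)$ while $r_Q\gtrsim\dist(x_Q,X\setminus B_0)$'' is false: $\dist(x_Q,X\setminus B_0)$ tends to zero for Whitney balls near the boundary of $B_0$, so the ratio $\diam(B)/r_Q$---and with it the number of chaining steps---is unbounded over $\mathcal{W}_0$, and a loss of a factor of the chain length in the oscillation estimate cannot be absorbed. Second, when $B\not\subset Q^*$ there is no chain of balls $B_i\subset Q^*$ ``shrinking from $B$ down to $Q$''; any chain connecting the Whitney scale $r_Q$ to the scale $\diam(B)$ must pass through intermediate balls contained in no single dilated Whitney ball $P^*$. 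Such balls belong to $\mathcal{B}_0$ but to no $\mathcal{B}_P$, so their normalized oscillations are controlled only by $M^{\nu}u$ itself, not by $M^{\nu}_{\textup{loc}}u$, and the telescoping becomes circular. What is actually needed in the big-ball case is a measure-level argument rather than a pointwise chain: one covers $U^\lambda$ by witnessing balls via the $5r$-covering lemma and shows that each \emph{large} witnessing ball must contain a definite proportion (in measure) of a suitable superlevel set of $M^{\nu}_{\textup{loc}}u$, by transferring the large $\nu$-averaged oscillation of $u$ over $B$ down to the Whitney scale; doubling then yields the level-set comparison. Without such a replacement for the chaining step, the distributional inequality---and hence the lemma---is not established.
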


Lemma \ref{l.big_to_small_ball} is a two-measure variant of
\cite[Lemma 4.10]{KinnunenLehrbackVahakangasZhong}. The key ingderient in the proof is a 
modification of a distributional inequality 
\cite[Lemma 12.3.1]{MR3363168};  see also  \cite[Lemma 3.2.1]{MR2415381}.
The two measures $\nu$ and $\mu$ do not significantly interact
in the proof of Lemma \ref{l.big_to_small_ball}, and hence we omit the straightforward but 
tedious  modifications that are needed for the proof.

\subsection{Stopping construction}\label{s.stopping}
The following stopping construction is needed for each Whitney ball separately.
Fix $Q\in\mathcal{W}_0$.
The number
\[
\lambda_Q=\biggl(\frac{1}{\diam(Q^*)^{p}} \vint_{Q^*} \lvert u(y)-u_{Q^*;\nu}\rvert^p\,d\nu(y)\biggr)^{1/p}
\]
serves as a certain treshold value.
Fix a level $\lambda>\lambda_Q/2$.
We will construct a stopping family $\mathcal{S}_\lambda(Q)$ of balls
whose $5$-dilations, in particular, cover the set $Q^\lambda$; recall the definition~\eqref{e.super}.
As a first step towards the stopping balls, let
$B\in\mathcal{B}_{Q}$ be such that $B\cap Q\not=\emptyset$. The {\em parent ball} of $B$ is then defined to be $\pi(B)=2B$ if $2B\subset Q^*$ and $\pi(B)=Q^*$ otherwise.
Observe that since  $B\subset \pi(B)\in\mathcal{B}_Q$ and $\pi(B)\cap Q\not=\emptyset$, 
the grandparent $\pi(\pi(B))$ is well defined, and so on and so forth.
Moreover, by inequalities \eqref{e.doubling} and \eqref{e.diams}, and property (W3) if needed, we have $\nu(\pi(B))\le c_\nu^5 \nu(B)$
and $\diam(\pi(B))\le 16\diam(B)$.

Now we come to the actual stopping argument. We fix  $x\in Q^\lambda\subset Q$.
If $\lambda_Q/2<\lambda<\lambda_Q$, then we choose
$B_x=Q^*\in\mathcal{B}_{Q}$. If 
$\lambda\ge \lambda_Q$, then by using the condition $x\in Q^\lambda$ we first choose a starting ball $B$, with $x\in B\in\mathcal{B}_Q$, such that
\[
\lambda <
\biggl(\frac{1}{\diam(B)^{p}} \vint_{B} \lvert u(y)-u_{B;\nu}\rvert^p\,d\nu(y)\biggr)^{1/p}\,.
\]
We continue by looking at the balls $B\subset \pi(B) \subset \pi(\pi(B))\subset \dotsb$ 
and we 
stop at the first ball among them, denoted by $B_x\in\mathcal{B}_{Q}$, that satisfies the following stopping conditions: 
\begin{align*}
\begin{cases} 
\lambda <
\displaystyle\biggl(\frac{1}{\diam(B_x)^{p}} \vint_{B_x} \lvert u(y)-u_{B_x;\nu}\rvert^p\,d\nu(y)\biggr)^{1/p},\\
\displaystyle \biggl(\frac{1}{\diam(\pi(B_x))^{p}} \vint_{\pi(B_x)} \lvert u(y)-u_{\pi(B_x);\nu}\rvert^p\,d\nu(y)\biggr)^{1/p}\le \lambda \,.
 \end{cases}
\end{align*}
The inequality $\lambda\ge \lambda_Q$ in combination with the assumption $B_0\subsetneq X$ ensures that there always exists such a stopping ball.
In both cases above,  the chosen ball $B_x^\lambda=B_x\in\mathcal{B}_Q$  contains
the point $x$ and satisfies  the  inequalities
\begin{equation}\label{e.loc_stop}
\lambda<
\biggl(\frac{1}{\diam(B_x^\lambda)^{p}} \vint_{B_x^\lambda} \lvert u(y)-u_{B_x^\lambda;\nu}\rvert^p\,d\nu(y)\biggr)^{1/p}\le 32c_\nu^{5/p} \lambda\,.
\end{equation}
Now, by using the $5r$-covering lemma,  we obtain a countable pairwise disjoint family 
\[\mathcal{S}_\lambda(Q)\subset \bigl\{B_x^\lambda\,:\, x\in Q^\lambda\bigr\}\,,\qquad \lambda>\lambda_Q/2\,,\]
of {\em stopping balls}  such that $Q^\lambda\subset \bigcup_{B\in\mathcal{S}_\lambda(Q)}
5B$.
Let us remark that, by the condition (W4) and stopping inequality \eqref{e.loc_stop}, we have $B\subset U^{\lambda}_{Q^*}=U^{\lambda}\cap Q^*$ 
if
$B\in \mathcal{S}_\lambda(Q)$ and $\lambda>\lambda_Q/2$.

\subsection{Auxiliary local results}\label{ss.auxiliary_local}

This section contains 
two technical results: Lemma \ref{l.dyadic} and Lemma \ref{l.mainl_local}.
Even though these lemmata are slight variants of their
counterparts in \cite{KinnunenLehrbackVahakangasZhong},
we provide all the details here. In particular, we develop novel comparison
and balancing arguments  which show how the measures $\nu$ and $\mu$ 
need to interact.

Lemma \ref{l.dyadic} below  
is the only place in the proof of Theorem~\ref{t.main_local} 
where the $\Psi$-bumped $p$-balance condition is needed.
This lemma is a two-measure counterpart of 
\cite[Lemma 4.15]{KinnunenLehrbackVahakangasZhong};
see also \cite[Lemma 3.1.2]{MR2415381}. 
Recall that 
\[\alpha=p/(2(s+p))>0\,,\qquad \text{ with } s=\log_2 c_\nu> 0\,.\]

\begin{lemma}\label{l.dyadic}
Let
$Q\in\mathcal{W}_0$ be  a Whitney ball and let $\lambda>\lambda_Q/2$. Then there exists a constant
$C_1=C(\delta,p,c_\nu,c_{\nu,\mu},\lVert \Psi\rVert_\infty)>0$  such that inequality
\begin{equation}\label{e.abso}
\begin{split}
\frac{1}{\diam(B)^{p}}&\int_{U_{B}^{2^k\lambda}}\lvert u(x)-u_{B\setminus U^{2^k\lambda};\nu}\rvert^p\,d\nu(x)\\&\le C_1( 2^{-k\alpha}+\Psi(C_12^{-k\alpha}))(2^{k}\lambda)^p 
\mu(U_{B}^{2^k\lambda})\frac{\nu(B)}{\mu(B)}
\\&\quad\qquad\qquad+\frac{C_1}{\diam(B)^{ p}}\int_{B\setminus U^{2^k\lambda}} \lvert u(x)-u_{B\setminus U^{2^k\lambda};\nu}\rvert^p\,d\nu(x)
\end{split}
\end{equation}
holds whenever $B\in\mathcal{S}_\lambda(Q)$
is such that $\nu (U_{B}^{2^k\lambda}) < \nu(B)/2$.
\end{lemma}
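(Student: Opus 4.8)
\textbf{Plan of proof for Lemma~\ref{l.dyadic}.}
The overall strategy is to reduce the estimate to an application of the assumed $(p,p)$-Poincar\'e inequality on a suitable ball, combined with the $\Psi$-bumped $p$-balance condition to convert the resulting average of $g^p$ over a large ball back into data on the level set $U_B^{2^k\lambda}$. First I would fix $B\in\mathcal{S}_\lambda(Q)$ with $\nu(U_B^{2^k\lambda})<\nu(B)/2$ and write $\lambda'=2^k\lambda$ for brevity. On the left-hand side I would split the integral over $U_B^{\lambda'}$ by comparing $u$ to its average over the \emph{whole} ball $B$ with respect to $\nu$, using the elementary inequality $\lvert a-c\rvert^p\le 2^{p-1}(\lvert a-b\rvert^p+\lvert b-c\rvert^p)$ with $b=u_{B;\nu}$, $c=u_{B\setminus U^{\lambda'};\nu}$; the cross term $\lvert u_{B;\nu}-u_{B\setminus U^{\lambda'};\nu}\rvert^p$ is controlled because $\nu(B\setminus U^{\lambda'})\ge\nu(B)/2$, so this constant is comparable to $\vint_{B\setminus U^{\lambda'}}\lvert u-u_{B\setminus U^{\lambda'};\nu}\rvert^p\,d\nu$ and feeds directly into the last term of~\eqref{e.abso}. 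Thus the matter is reduced to estimating $\diam(B)^{-p}\int_{U_B^{\lambda'}}\lvert u-u_{B;\nu}\rvert^p\,d\nu$.

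For that main term I would use the truncation idea of Keith--Zhong (cf.~\cite[Lemma~3.1.2]{MR2415381}): instead of $u$ itself, apply the Poincar\'e inequality to a function obtained from $u$ by a Lipschitz truncation at the dyadic levels $2^j\lambda$, $0\le j\le k$, which localizes the contribution of the upper gradient to the set $U_B^\lambda\setminus U_B^{\lambda'}$ — and more precisely, thanks to Lemma~\ref{l.arm_local} and the stopping inequality~\eqref{e.loc_stop}, the minimal $p$-weak upper gradient of the truncated function is bounded by $C(c_\nu)\lambda'$ on $B\setminus U^{\lambda'}$ and by $g$ on $U^{\lambda'}$, while the whole argument only needs to see the "annular" pieces $U^{2^j\lambda}$. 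Applying the $(p,p)$-Poincar\'e inequality (using (W4) to guarantee $B\in\mathcal{B}_0$ and hence that the inequality is available on $B$) on the truncated function yields a bound of the form
\[
\frac{1}{\diam(B)^p}\int_{U_B^{\lambda'}}\lvert u-u_{B;\nu}\rvert^p\,d\nu
\le C\,K_{p,p}\,(\lambda')^p\,\frac{\nu(B)}{\mu(B)}\,\mu\bigl(\{x\in B: 2^{-k}\lambda'< M^\nu_B u(x)\}\bigr)\cdot(\text{small factor}),
\]
where the "small factor" $2^{-k\alpha}$ arises from a dyadic pigeonholing over the $k$ truncation levels — one of the $k$ annuli $U^{2^{j}\lambda}\setminus U^{2^{j+1}\lambda}$ must carry a small fraction of the relevant mass, and choosing it produces the exponent $\alpha=p/(2(s+p))$ after balancing the measure ratio via~\eqref{e.radius_measure}. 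Here is where the interaction of the two measures enters: the Poincar\'e inequality produces $\vint_B g^p\,d\mu$, and to turn $\mu$-averages on $B$ into the factor $\mu(U_B^{\lambda'})\tfrac{\nu(B)}{\mu(B)}$ appearing in~\eqref{e.abso} one passes through the $\Psi$-bumped $p$-balance condition applied to the pair $(B, B')$ with $B'$ a stopping ball at level $\lambda'$ sitting inside $U_B^{\lambda'}$; this is precisely where the term $\Psi(C_1 2^{-k\alpha})$ is generated, since the radius ratio of such $B'$ to $B$ is controlled by $C_1 2^{-k\alpha}$ via the dyadic decay of $\nu$-averages in~\eqref{e.loc_stop} and~\eqref{e.radius_measure}.

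The main obstacle, and the part requiring the most care, is the bookkeeping of the dyadic truncation: one must verify that the truncated competitor is genuinely Lipschitz on $X$ (so that $\mathcal{D}^p$ applies), that its upper gradient really is supported where claimed — this uses property (D3) and the Glueing lemma together with Lemma~\ref{l.arm_local} to see that on $Q^*\setminus U^{\lambda'}$ the function is $C(c_\nu)\lambda'$-Lipschitz — and that the pigeonhole choice of the "good" annulus can be made uniformly so that the exponent $\alpha$ and the arguments $C_1 2^{-k\alpha}$ of $\Psi$ come out exactly as stated. A secondary technical point is controlling the comparison between $u_{B;\nu}$ and $u_{B\setminus U^{\lambda'};\nu}$, and between averages on $B$ and on the stopping parent, using the doubling property of $\nu$ (the factors $c_\nu^5$ from Section~\ref{s.stopping}) and the assumption $\nu(U_B^{\lambda'})<\nu(B)/2$; these are routine but must be tracked to keep the constant $C_1$ depending only on the listed parameters $\delta,p,c_\nu,c_{\nu,\mu},\lVert\Psi\rVert_\infty$.
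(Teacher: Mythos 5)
Your strategy does not match what this lemma needs, and the central step would fail. The conclusion \eqref{e.abso} contains no upper gradient term, and the constant $C_1$ depends only on $\delta,p,c_\nu,c_{\nu,\mu},\lVert\Psi\rVert_\infty$ --- in particular \emph{not} on $K_{p,p}$. The whole architecture of Section~\ref{s.main} is that the $(p,p)$-Poincar\'e inequality is invoked exactly once, in Lemma~\ref{l.mainl_local} (which is where the truncation/McShane construction you describe actually belongs), whereas Lemma~\ref{l.dyadic} is the one place where the $\Psi$-bumped balance condition is used and must be proved \emph{without} Poincar\'e. If you apply the $(p,p)$-Poincar\'e inequality to a truncated competitor, you inevitably produce a term of the form $\diam(B)^p\tfrac{\nu(B)}{\mu(B)}\vint_B \hat g^p\,d\mu$, and there is no mechanism --- certainly not the balance condition, which only compares measures and diameters of balls --- to convert the genuine $g$-part of that integral into the quantity $(2^k\lambda)^p\mu(U_B^{2^k\lambda})\tfrac{\nu(B)}{\mu(B)}$ on the right of \eqref{e.abso}. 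Your claimed source of the factor $2^{-k\alpha}$ (a pigeonhole over $k$ truncation levels) is also not a workable mechanism here: the specific exponent $\alpha=p/(2(s+p))$ must come from comparing radii and measures via \eqref{e.radius_measure}. (A further sign of confusion: the upper gradient of the McShane extension is $C\lambda'$ on $U^{\lambda'}\cup B^c$ and $g$ on $B\setminus U^{\lambda'}$, not the other way around.)

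The missing idea is a covering of the bad set $U_B^{2^k\lambda}$ by ``half-and-half'' balls. For each $x\in U_B^{2^k\lambda}$ the function $r\mapsto \nu(U_B^{2^k\lambda}\cap B(x,r))/\nu(B\cap B(x,r))$ is continuous (Lemma~\ref{l.continuous}), equals $1$ for small $r$, and is below $1/2$ for $r>\diam(B)$ by the hypothesis $\nu(U_B^{2^k\lambda})<\nu(B)/2$; choosing $r_x$ where it equals $1/2$ gives a ball $B'_x$ in which exactly half of the $\nu$-mass of $B\cap B'_x$ lies outside $U^{2^k\lambda}$. After a $5r$-covering selection these identities let you transfer the integral from $U_B^{2^k\lambda}$ to $B\setminus U^{2^k\lambda}$, producing the last term of \eqref{e.abso}. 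The remaining piece, a sum of $\int_{5B'\cap B}\lvert u-u_{(B\setminus U^{2^k\lambda})\cap B';\nu}\rvert^p\,d\nu$ over the selected balls, is handled by a case analysis on the size of this local oscillation: either it is already bounded by $C2^{-k\alpha}(2^k\lambda)^p\diam(5B')^p$, or else the stopping inequality \eqref{e.loc_stop} combined with \eqref{e.radius_measure} forces $\diam(5B')/\diam(B)\le C2^{-k\alpha}$, at which point the $\Psi$-bumped $p$-balance condition applied to the pair $(B',B)$ yields the factor $\Psi(C_1 2^{-k\alpha})\tfrac{\nu(B)}{\nu(B')}\tfrac{\mu(B')}{\mu(B)}$. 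Summing, with the $A_\infty(\mu)$ hypothesis giving $\mu(B')\le C\,\mu(B'\cap U_B^{2^k\lambda})$ (valid because half of $\nu(B\cap B')$ sits inside the bad set), produces the factor $\mu(U_B^{2^k\lambda})$. None of this uses $g$ or $K_{p,p}$. Your opening reduction (splitting off the cross term using $\nu(B\setminus U^{2^k\lambda})\ge\nu(B)/2$) is fine, but everything after it needs to be replaced.
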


\begin{proof}
Fix $\lambda>\lambda_Q/2$ and  let $B\in\mathcal{S}_\lambda(Q)$ be such that
$\nu (U_{B}^{2^k\lambda}) < \nu(B)/2$. Fix $x\in U_{B}^{2^k\lambda}\subset B$. Consider
the function $h\colon (0,\infty)\to\R$, 
\[
r\mapsto h(r)= \frac{\nu(U_{B}^{2^k\lambda}\cap B(x,r))}{\nu(B\cap B(x,r))}=\frac{\nu(U_{B}^{2^k\lambda}\cap B(x,r))}{\nu(B(x,r))}\cdot 
\biggl(\frac{\nu(B\cap B(x,r))}{\nu(B(x,r))}\biggr)^{-1}\,.
\]
By Lemma \ref{l.continuous} and the fact that $B$ is open,  $h$   
is continuous.
Since $h(r)=1$ for small values of $r>0$
and $h(r)<1/2$ for $r>\diam(B)$, 
there is $0<r_x\le \diam(B)$ such that $h(r_x)=1/2$.
Write  $B'_x=B(x,r_x)$. Then
\begin{equation}\label{e.tok}
\frac{\nu(U_{B}^{2^k\lambda}\cap  B'_x)}{\nu(B\cap B'_x)}=h(r_x)=\frac{1}{2}
\end{equation}
and
\begin{equation}\label{e.ens}
\frac{\nu((B\setminus U^{2^k\lambda})\cap  B'_x)}{\nu(B\cap  B'_x)}
=1-\frac{\nu(U_{B}^{2^k\lambda}\cap  B'_x)}{\nu(B\cap B'_x)}
= 1-h(r_x)=\frac{1}{2}\,.
\end{equation}
Let $\mathcal{G}_\lambda$ be a countable and pairwise disjoint subfamily of $\{ B'_x:  x\in U_{B}^{2^k\lambda}\}$
such that $U_{B}^{2^k\lambda}\subset\bigcup_{B'\in\mathcal{G}_\lambda}5B'$.
Then 
\eqref{e.tok} and \eqref{e.ens}  hold for every ball $B'\in
\mathcal{G}_\lambda$; indeed, 
by denoting $B'_I=U_{B}^{2^k\lambda}\cap B'$ and
${B'_O}=(B\setminus U^{2^k\lambda})\cap B'$,
we have the following transition identities:
\begin{equation}\label{e.comparison}
\nu(B'_I)=  \frac{\nu( B\cap B')}{2}=  
\nu({B'_O})\,,
\end{equation}
where all the measures are strictly positive. These identities
facilitate a transition of the domain of integration 
on the left-hand side of \eqref{e.abso}
from $U^{2^k\lambda}_B$ to $B\setminus U^{2^k\lambda}_B$, and they are used several times in the proof below.

Now, we multiply the left-hand side of \eqref{e.abso}
by $\diam(B)^{ p}$ and then estimate as follows:
\begin{equation}\label{e.prepare}
\begin{split}
\int_{U_{B}^{2^k\lambda}}  &\lvert u-u_{B\setminus U^{2^k\lambda};\nu}\rvert^p\,d\nu
\le\sum_{B'\in\mathcal{G}_\lambda} \int_{5B'\cap B}\lvert u-u_{B\setminus U^{2^k\lambda};\nu}\rvert^p\,d\nu\\
&\le 2^{p-1}\sum_{B'\in\mathcal{G}_\lambda} \nu(5B'\cap B) \lvert u_{{B'_O};\nu}-u_{B\setminus U^{2^k\lambda};\nu}\rvert^p+
2^{p-1}\sum_{B'\in\mathcal{G}_\lambda} \int_{5B'\cap B}\lvert u-u_{{B'_O};\nu}\rvert^p\,d\nu\,.
\end{split}
\end{equation}
By \eqref{e.doubling} and Lemma \ref{l.ball_measures},
we find that
 $\nu(5B'\cap B)\le \nu(8B') \le c_\nu^6 \nu(B\cap B')$
 if $B'\in\mathcal{G}_\lambda$.
Hence, by the transition identities \eqref{e.comparison},
 \begin{equation}\label{e.eka}
\begin{split}
2^{p-1}&\sum_{B'\in\mathcal{G}_\lambda} \nu(5B'\cap B)  \lvert u_{{B'_O};\nu}-u_{B\setminus U^{2^k\lambda};\nu}\rvert^p
\le C(c_\nu,p) \sum_{B'\in\mathcal{G}_\lambda} \nu({B'_O})  
\vint_{{B'_O}} \lvert u-u_{B\setminus U^{2^k\lambda};\nu}\rvert^p\,d\nu
\\&=C(c_\nu,p)\sum_{B'\in\mathcal{G}_\lambda}  
\int_{{B'_O}} \lvert u-u_{B\setminus U^{2^k\lambda};\nu}\rvert^p\,d\nu
\le C(c_\nu,p)
\int_{B\setminus U^{2^k\lambda}} \lvert u-u_{B\setminus U^{2^k\lambda};\nu}\rvert^p\,d\nu\,.
\end{split}
\end{equation}
This concludes our analysis of the `easy term'  on the right-hand side of~\eqref{e.prepare}. 
In order to treat the remaining term therein, we  need some preparations.

Fix a ball $B'\in\mathcal{G}_\lambda$ that satisfies
$\int_{5B'\cap B} \lvert u-u_{{B'_O};\nu}\rvert^p\,d\nu\not=0$. 
By using Lemma \ref{l.ball_measures} and the transition identities~\eqref{e.comparison}, 
we have $\nu(B')\le c_\nu^3\nu(B\cap B')= 2c_\nu^3 \nu(B'_I)$. Hence, from the assumption that
$\nu$ is an $A_\infty(\mu)$-weighted measure, it follows that
\begin{equation}\label{e.in}
\mu(B')\le C(\delta,c_\nu,c_{\nu,\mu}) \mu(B'_I)\,.
\end{equation}
We also claim that
\begin{equation}\label{e.out}
\vint_{5B'\cap B}\lvert u-u_{{B'_O};\nu}\rvert^p\,d\nu\le C_2(2^{-k\alpha}+\Psi(C_22^{-k\alpha})) (2^k\lambda)^p \diam(B)^{p}\frac{\nu(B)}{\nu(B')}\frac{\mu(B')}{\mu(B)}
\end{equation}
with a constant $C_2=C(p,c_\nu,\lVert \Psi\rVert_\infty)$. 
In order to prove this inequality, we fix a number $m\in \R$ such that 
\begin{align*}
(2^m \lambda)^p \diam(5B')^{p}&=\vint_{5B'\cap B}\lvert u-u_{{B'_O};\nu}\rvert^p\,d\nu\,.
\end{align*}
We proceed with a case study.
If $m< k/2$, then $m-k<-k/2$, and since always $\alpha<p/2$, inequality \eqref{e.out} is obtained in this case
by using inequality \eqref{e.diams} and the assumed $\Psi$-bumped $p$-balance condition as follows: 
\begin{align*}
\vint_{5B'\cap B}\lvert u-u_{{B'_O};\nu}\rvert^p\,d\nu
&=2^{(m-k)p} (2^k\lambda)^p\diam(5B')^{p} 
\\&\le 4^{3p}\lVert \Psi\rVert_\infty2^{-k\alpha}(2^k\lambda)^p\diam(B)^p\frac{\nu(B)}{\nu(B')}\frac{\mu(B')}{\mu(B)}\,.
\end{align*}

Next we consider the case $k/2\le m$.
By the transition identities \eqref{e.comparison} and Lemma \ref{l.ball_measures},
\begin{align*}
\vint_{5B'\cap B} \lvert u-u_{{B'_O};\nu}\rvert^p\,d\nu &\le  
2^{p-1}\vint_{5B'\cap B} \lvert u-u_{5B';\nu}\rvert^p\,d\nu + 2^{p-1}\lvert 
u_{5B';\nu}-u_{{B'_O};\nu}\rvert^p
\\&\le 2^{p+1}c_\nu^6 \vint_{5B'} \lvert u-u_{5B';\nu}\rvert^p\,d\nu
\le 2^{p+1}c_\nu^6 (2^k\lambda)^p\diam(5B')^{p}\,,
\end{align*}
where the last step follows from condition (W5) and the fact that $5B'\supset {B'_O}\not=\emptyset$.
From the choice of $m$ we conclude that 
$2^{mp}\le 2^{p+1} c_\nu^6 2^{kp}$.
On the other hand, we have
\begin{align*}
(2^m \lambda)^p \diam(5B')^{ p}\nu(B'\cap B)&\le 
\int_{5B'\cap B} \lvert u-u_{{B'_O};\nu}\rvert^p\,d\nu\\
&\le  2^{p-1}\int_{5B'\cap B} \lvert u-u_{B;\nu}\rvert^p\,d\nu
+ 2^{p-1}\nu(5B'\cap B) \lvert u_{{B'_O};\nu}-u_{B;\nu}\rvert^p\\
&\le 2^{p+1} c_\nu^6 \int_B \lvert u-u_{B;\nu}\rvert^p\,d\nu\\
&\le 2 \cdot 64^p c_\nu^{11} \lambda^p \diam(B)^{p}\nu(B)\,,
\end{align*}
where the last step follows from the fact that $B\in\mathcal{S}_\lambda(Q)$ in combination with
inequality \eqref{e.loc_stop}.
In particular, since $s=\log_2 c_\nu$, by inequality \eqref{e.radius_measure}
and Lemma \ref{l.ball_measures} we obtain that
\begin{align*}
\biggl(\frac{\diam(5B')}{\diam(B)}\biggr)^{s+p} &\le 20^s \frac{\diam(5B')^{p}\nu(B')}{\diam(B)^{p}\nu(B)} 
\le 20^s c_\nu^3\frac{\diam(5B')^{p}\nu(B'\cap B)}{\diam(B)^{p}\nu(B)} \\
& \le  2\cdot 64^p 20^s c_\nu^{14} 2^{-mp}\le 2\cdot 64^p 20^s c_\nu^{14} 2^{-kp/2}\,.
\end{align*}
This,  together with inequality \eqref{e.diams} and the $\Psi$-bumped $p$-balance condition,  implies that
\begin{align*}
\biggl(\frac{\diam(5B')}{\diam(B)}\biggr)^{p}
&\le 4^{3p}\biggl(\frac{\diam(B')}{\diam(B)}\biggr)^{p}
\\&\le 4^{3p}\Psi\biggl(\frac{\diam(B')}{\diam(B)}\biggr)
\frac{\nu(B)}{\nu(B')}\frac{\mu(B')}{\mu(B)}\\
&\le 4^{3p}\Psi( C(c_\nu,p)2^{-k\alpha})\frac{\nu(B)}{\nu(B')}\frac{\mu(B')}{\mu(B)}\,;
\end{align*}
here we also used the facts that $\alpha=p/(2(s+p))$ and
that the function $\Psi$ is non-decreasing.
Combining the above estimates, we see that
\begin{align*}
\vint_{5B'\cap B}\lvert u-u_{{B'_O};\nu}\rvert^p\,d\nu&=(2^m \lambda)^p \diam(5B')^{p}\\&
\le C(c_\nu,p) \Psi( C(c_\nu,p)2^{-k\alpha})(2^k\lambda)^p \diam(B)^{p}\frac{\nu(B)}{\nu(B')}\frac{\mu(B')}{\mu(B)}\,.
\end{align*}
That is, inequality \eqref{e.out} holds also in the present case $k/2\le m$.

By using Lemma \ref{l.ball_measures} followed by inequalities \eqref{e.in} and  \eqref{e.out}, 
we can now estimate the second term  on the right-hand side of~\eqref{e.prepare}  as follows, 
with a constant $C_1=C(\delta,p,c_\nu,c_{\nu,\mu},\lVert \Psi\rVert_\infty)>0$:
\begin{align*}
2^{p-1}\sum_{B'\in\mathcal{G}_\lambda} &\int_{5B'\cap B}\lvert u-u_{{B'_O};\nu}\rvert^p\,d\nu
\le C(c_\nu,p)\sum_{B'\in\mathcal{G}_\lambda} \nu(B') \vint_{5B'\cap B}\lvert u-u_{{B'_O};\nu}\rvert^p\,d\nu\\
&\le C_1(2^{-k\alpha}+\Psi(C_12^{-k\alpha}))(2^k\lambda)^p \diam(B)^{p}\frac{\nu(B)}{\mu(B)}\sum_{B'\in\mathcal{G}_\lambda} \mu(B'_I)  
\\&\le C_1(2^{-k\alpha}+\Psi(C_12^{-k\alpha}))(2^k\lambda)^p \diam(B)^{p}\mu(U^{2^k\lambda}_B)\frac{\nu(B)}{\mu(B)}\,. 
\end{align*}
  Inequality \eqref{e.abso} follows by collecting the above estimates.
\end{proof}

Whereas the $\Psi$-bumped $p$-balance condition  was only  needed in the proof of
Lemma \ref{l.dyadic}, the following lemma is the only place in the proof of Theorem \ref{t.main_local} where
the $(p,p)$-Poincar\'e inequality is needed. Moreover, 
it is invoked one single time.

\begin{lemma}\label{l.mainl_local}
Let $Q\in\mathcal{W}_0$ be a Whitney ball. Then
inequality 
\begin{equation}\label{e.id}
\begin{split}
\lambda^p \mu(Q^\lambda)
\le C_1\biggl[(2^{-k\alpha}&+\Psi(C_12^{-k\alpha}))(2^{k}\lambda)^p \mu(U^{2^k \lambda}_{Q^*})\\
& + \frac{K_{p,p}}{k^p} \sum_{j=k}^{2k-1}  (2^{j}\lambda)^p \mu(U^{2^j \lambda}_{Q^*})
  + K_{p,p}\int_{U^{\lambda}_{Q^*}\setminus U^{4^k\lambda}} g^p\,d\mu\biggr]
\end{split}
\end{equation}
holds for each $\lambda>\lambda_Q/2$ and every $g\in\mathcal{D}^{p}(u)$.
Here $C_1=C(\delta,p,c_\mu,c_\nu,c_{\nu,\mu},\lVert \Psi\rVert_\infty)>0$.
\end{lemma}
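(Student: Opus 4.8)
The plan is to estimate $\lambda^p\mu(Q^\lambda)$ by working over the stopping balls constructed in Section~\ref{s.stopping}. Fix $\lambda>\lambda_Q/2$ and recall that $Q^\lambda\subset\bigcup_{B\in\mathcal S_\lambda(Q)}5B$, that the balls in $\mathcal S_\lambda(Q)$ are pairwise disjoint and contained in $U^\lambda_{Q^*}$, and that each $B\in\mathcal S_\lambda(Q)$ satisfies the stopping inequality~\eqref{e.loc_stop}. Hence $\mu(Q^\lambda)\le\sum_{B}\mu(5B)\le C(c_\mu)\sum_B\mu(B)$, and using~\eqref{e.loc_stop} it suffices to produce, for each single stopping ball $B$, an estimate of the form
\[
\lambda^p\mu(B)\le C_1\Bigl[(2^{-k\alpha}+\Psi(C_12^{-k\alpha}))(2^k\lambda)^p\mu(U^{2^k\lambda}_{Q^*}\cap\text{region})+\dotsb\Bigr]
\]
which, after summing over the disjoint family $\mathcal S_\lambda(Q)$ and exploiting that the relevant enlargements have bounded overlap inside $Q^*$, yields~\eqref{e.id}. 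The main work is thus the single-ball estimate.

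For a fixed $B\in\mathcal S_\lambda(Q)$ I would split into two cases according to the size of $\nu(U^{2^k\lambda}_B)$. If $\nu(U^{2^k\lambda}_B)\ge\nu(B)/2$, then the point $x$ realizing $M^\nu_Q u(x)>2^k\lambda$ over a large portion of $B$ gives, via the definition of $M^\nu u$ and the doubling of $\nu$, a bound $\lambda^p\mu(B)\le C(2^{-k}\cdot\text{something})$; more precisely one uses~\eqref{e.loc_stop} together with $\nu$-doubling to compare $\diam(B)^{-p}\vint_B|u-u_{B;\nu}|^p\,d\nu$ with its analogue over the large subset, and then estimates $\mu(B)$ by $C(\delta,c_{\nu,\mu})\mu(U^{2^k\lambda}_B)$ using that $\nu$ is $A_\infty(\mu)$; this feeds the first term on the right of~\eqref{e.id}. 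If instead $\nu(U^{2^k\lambda}_B)<\nu(B)/2$, then Lemma~\ref{l.dyadic} applies: it bounds $\diam(B)^{-p}\int_{U^{2^k\lambda}_B}|u-u_{B\setminus U^{2^k\lambda};\nu}|^p\,d\nu$ by the $\Psi$-term plus a term over $B\setminus U^{2^k\lambda}$. On $B\setminus U^{2^k\lambda}$ we have the pointwise Lipschitz control of Lemma~\ref{l.arm_local} (at level $2^k\lambda$, since $B\subset Q^*$ and the points lie outside $U^{2^k\lambda}$), so $|u-u_{B\setminus U^{2^k\lambda};\nu}|\le C(c_\nu)2^k\lambda\diam(B)$ there; but we want a factor $\lambda^p$, not $(2^k\lambda)^p$, and the factor $4^{k\eps}$ must not appear at this stage. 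The resolution — and this is the heart of the argument, imported from~\cite{KinnunenLehrbackVahakangasZhong} — is a \emph{telescoping over dyadic levels}: one applies the above dichotomy not at the single level $2^k\lambda$ but at each of the levels $2^j\lambda$, $k\le j\le 2k-1$, obtains a recursion relating the $L^p$-oscillation of $u$ over $B$ at consecutive scales, and the $1/k^p$ gain in~\eqref{e.id} comes precisely from averaging $k$ such inequalities (a convexity/Jensen step on the $k$ levels). The $(p,p)$-Poincar\'e inequality enters exactly once: on the annular region $U^\lambda_{Q^*}\setminus U^{4^k\lambda}$ one replaces the remaining oscillation term by $\int g^p\,d\mu$ over a ball, producing the last term of~\eqref{e.id}; the Poincar\'e constant $K_{p,p}$ is carried faithfully.

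Assembling: after the single-ball analysis one sums over $\mathcal S_\lambda(Q)$. Disjointness of the stopping balls, the inclusion $\bigcup_B B\subset U^\lambda_{Q^*}$, and the bounded-overlap of the dilates $5B$ and of the parent balls inside $Q^*$ turn $\sum_B\mu(U^{2^j\lambda}_B)$ into $C\,\mu(U^{2^j\lambda}_{Q^*})$ and $\sum_B\int_{B\cap(\text{annulus})}g^p\,d\mu$ into $C\int_{U^\lambda_{Q^*}\setminus U^{4^k\lambda}}g^p\,d\mu$; combining with the $5r$-covering bound $\mu(Q^\lambda)\le C(c_\mu)\sum_B\mu(B)$ and the stopping inequality~\eqref{e.loc_stop} gives~\eqref{e.id} with $C_1$ depending only on $\delta,p,c_\mu,c_\nu,c_{\nu,\mu},\lVert\Psi\rVert_\infty$. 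I expect the main obstacle to be keeping the interaction of the two measures straight in the dyadic telescoping: Lemma~\ref{l.dyadic} naturally outputs the mixed factor $\nu(B)/\mu(B)$ times $\mu(U^{2^k\lambda}_B)$, and one must repeatedly convert $\nu$-measures of the level sets into $\mu$-measures using the $A_\infty(\mu)$-property (valid because $U^{2^j\lambda}_B$ has at least half the $\nu$-measure of $B$ in the relevant case) while simultaneously using $\nu$-doubling on the oscillation side; threading these conversions through the $k$-fold recursion without losing the $1/k^p$ gain or acquiring an unwanted power of $c_\nu$ is the delicate point.
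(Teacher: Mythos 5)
Your architecture matches the paper's: reduce to the pairwise disjoint stopping balls via the $5r$-covering and $\mu$-doubling, split according to whether $U_B^{2^k\lambda}$ occupies a definite fraction of $B$ (the paper tests $\mu(U_B^{2^k\lambda})\ge(2c_{\nu,\mu})^{-1/\delta}\mu(B)$ and in the opposite case deduces $\nu(U_B^{2^k\lambda})<\nu(B)/2$ from $A_\infty(\mu)$; your $\nu$-based split is equivalent for this purpose and your first case is in fact easier than you make it, since $A_\infty(\mu)$ alone gives $\mu(B)\le C\mu(U_B^{2^k\lambda})$ with no appeal to the stopping inequality), invoke Lemma~\ref{l.dyadic} once at level $2^k\lambda$, and handle the remaining oscillation over $B\setminus U^{2^k\lambda}$ by a mechanism that gains $k^{-p}$ and uses the $(p,p)$-Poincar\'e inequality exactly once. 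You also correctly anticipate the $\nu(B)/\mu(B)$ bookkeeping: the stopping inequality produces $\lambda^p\nu(B)$ on the left, and both Lemma~\ref{l.dyadic} and the Poincar\'e step carry the compensating factor $\nu(B)/\mu(B)$.

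The genuine gap is in the central step. You describe the $k^{-p}$ gain as coming from ``applying the dichotomy at each of the levels $2^j\lambda$'' to get ``a recursion relating the $L^p$-oscillation at consecutive scales,'' closed by ``a convexity/Jensen step.'' That is not the mechanism, and as literally described it does not produce $k^{-p}$: iterating the dichotomy level by level yields a sum or product of single-level estimates, each still carrying a full factor $(2^j\lambda)^p$, with no $1/k$ normalization anywhere to be raised to the power $p$. The actual argument applies Lemma~\ref{l.arm_local} at each level $2^i\lambda$, $k\le i\le 2k-1$, to see that $u|_{B\setminus U^{2^i\lambda}}$ is $C(c_\nu)2^i\lambda$-Lipschitz, McShane-extends each restriction to $u_{2^i\lambda}$, and forms the single auxiliary function $h=\tfrac1k\sum_{i=k}^{2k-1}u_{2^i\lambda}$, which agrees with $u$ on $B\setminus U^{2^k\lambda}$. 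Its upper gradient $\hat g=\tfrac1k\sum_i(\kappa_i\mathbf{1}_{U^{2^i\lambda}\cup B^c}+g\mathbf{1}_{B\setminus U^{2^i\lambda}})$ is then bounded pointwise on $B$, using the nesting $U_B^{2^k\lambda}\supset\dotsb\supset U_B^{4^k\lambda}$, by $C(c_\nu,p)k^{-p}\sum_{j=k}^{2k-1}(2^j\lambda)^p\mathbf{1}_{U_B^{2^j\lambda}}+2^pg^p\mathbf{1}_{B\setminus U^{4^k\lambda}}$; the $k^{-p}$ is exactly the $1/k$ normalization of the average raised to the $p$-th power, and the $g^p$ survives only where all $k$ truncations agree with $u$, i.e.\ on $B\setminus U^{4^k\lambda}$. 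One application of the $(p,p)$-Poincar\'e inequality to $h$ and $\hat g$ on $B$ then yields both the middle term and the last term of \eqref{e.id} simultaneously. Without this explicit construction (or an equivalent one) your plan does not close; everything else in your outline is sound.
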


\begin{proof}
Fix $\lambda>\lambda_Q/2$ and $g\in\mathcal{D}^{p}(u)$.
By the doubling condition \eqref{e.doubling},
\[\lambda^p \mu(Q^\lambda)
\le \lambda^p \sum_{B\in\mathcal{S}_\lambda(Q)}
\mu(5B) \le c_\mu^3 \sum_{B\in\mathcal{S}_\lambda(Q)} \lambda^p \mu(B)\,.\]
Recall also that $B\subset U^{\lambda}_{Q^*}=U^\lambda\cap Q^*$ if $B\in\mathcal{S}_\lambda(Q)$.
Hence, by the fact that $\mathcal{S}_\lambda(Q)$ is a pairwise disjoint family, it suffices
to prove that 
there is a constant 
$C_2=C(\delta,p,c_\nu,c_{\nu,\mu},\lVert \Psi\rVert_\infty)>0$
such that
inequality
\begin{equation}\label{e.local}
\begin{split}
\lambda^p \mu(B)
\le C_2\biggl[( 2^{-k\alpha} & +\Psi(C_22^{-k\alpha}))(2^{k}\lambda)^p \mu(U^{2^k \lambda}_{B})\\
& +\frac{K_{p,p}}{k^p} \sum_{j=k}^{2k-1}  (2^{j}\lambda)^p \mu(U_B^{2^j \lambda})
  + K_{p,p}\int_{B\setminus U^{4^k\lambda}} g^p\,d\mu\biggl]
\end{split}
\end{equation}
holds for every $B\in\mathcal{S}_\lambda(Q)$.
To this end, let us fix a ball $B\in\mathcal{S}_\lambda(Q)$.

If $\mu(U_B^{2^k\lambda})\ge (2c_{\nu,\mu})^{-1/\delta}\mu(B)$, then 
\[\begin{split}
\lambda^p \mu(B) & \le (2c_{\nu,\mu})^{1/\delta}\lambda^p \mu(U_B^{2^k\lambda})
=C(\delta,c_{\nu,\mu})\frac{(\lambda 2^k)^p}{2^{kp}}\mu(U_B^{2^k\lambda})\\
& \le C(\delta,c_{\nu,\mu})\frac{(\lambda 2^k)^p}{2^{k \alpha}}\mu(U_B^{2^k\lambda})\,,
\end{split}\] 
which suffices for the required local estimate \eqref{e.local}.
Let us then consider the more difficult case $\mu(U_B^{2^k\lambda}) < (2c_{\nu,\mu})^{-1/\delta}\mu(B)$.
The assumption that $\nu$ is an $A_\infty(\mu)$-weighted measure 
implies that $\nu(U_B^{2^k\lambda}) < \nu(B)/2$.
By the 
stopping inequality \eqref{e.loc_stop},
\begin{align*}
\lambda^p\nu(B)&\le \frac{1}{\diam(B)^{p}}\int_{B} \lvert u(x)-u_{B;\nu}\rvert^p\,d\nu(x)
\\&\le \frac{2^{p}}{\diam(B)^{p}}\int_{X} \Bigl( \mathbf{1}_{B\setminus U^{2^k\lambda}}(x)+
\mathbf{1}_{U^{2^k\lambda}_{B}}(x)\Bigr)\lvert u(x)-u_{B\setminus U^{2^k\lambda};\nu}\rvert^p\,d\nu(x)\,.
\end{align*}
Let us emphasize that the measure term on the left-hand side above
is $\nu(B)$ instead of $\mu(B)$. The
$\Psi$-bumped $p$-balance condition and the $(p,p)$-Poincar\'e inequality
will be applied in order to transform this term, and thereby we will eventually end up with $\mu(B)$ 
on the left-hand side of \eqref{e.local}.
Actually, the $\psi$-bumped $p$-balance condition was already 
applied in Lemma \ref{l.dyadic}. By that lemma, it clearly suffices to estimate 
the integral over the set 
$B\setminus U^{2^k\lambda}=B\setminus U^{2^k\lambda}_B$;
observe that the $\nu$-measure of this set is strictly positive.

Fix a number $i\in\N$.
Recall that $B\subset Q^*$. Hence, it follows from Lemma \ref{l.arm_local} that the restriction $u|_{B\setminus U^{2^i\lambda}}\colon B\setminus U^{2^i\lambda}\to \R$ is a 
Lipschitz function with a constant $\kappa_i=C(c_\nu)2^i\lambda$. 
We  use  the McShane extension \eqref{McShane} and extend $u|_{B\setminus U^{2^i\lambda}}$ 
to a function $u_{2^i \lambda}\colon X\to \R$
that is $\kappa_i$-Lipschitz
and satisfies the restriction identity 
$u_{2^i \lambda}|_{B\setminus U^{2^i \lambda}}=u|_{B\setminus U^{2^i\lambda}}$.

We  now  define $h(x)=\frac{1}{k} \sum_{i=k}^{2k-1} u_{2^i \lambda}(x)$ whenever $x\in X$.
By conditions (D1)--(D3) in Section \ref{s.two_measure_PI}, we obtain that
\[
{\hat g}=\frac{1}{k}\sum_{i=k}^{2k-1} \Bigl( \kappa_i  \mathbf{1}_{U^{2^i\lambda}\cup B^c}
+ g\mathbf{1}_{B\setminus U^{2^i\lambda}}\Bigr)\in\mathcal{D}^{p}(h)\,.
\]
Observe that $U^{2^k\lambda}_B\supset U^{2^{(k+1)}\lambda}_B
\supset \dotsb \supset U^{2^{(2k-1)}\lambda}_B\supset U^{4^{k}\lambda}_B$.
By applying these inclusions it is straightforward to show that the following
pointwise estimates are valid in $X$,
\begin{equation*}\label{e.hardy}
\begin{split}
\mathbf{1}_B{\hat g}^p &\le \biggl( \frac{1}{k}\sum_{i=k}^{2k-1} \Bigl(   \kappa_i\,\mathbf{1}_{U_B^{2^i\lambda}}
+  g \mathbf{1}_{B\setminus U^{2^i\lambda}}\Bigr)\biggr)^p\\
&\le 2^{p}\biggl(\frac{1}{k}\sum_{i=k}^{2k-1} \kappa_i\, \mathbf{1}_{U_B^{2^i \lambda}}\biggr)^p
+  2^{p} g^p \mathbf{1}_{B\setminus U^{4^k\lambda}}\\
&\le \frac{C(c_\nu,p)}{k^p} \sum_{j=k}^{2k-1} \biggl(\sum_{i=k}^j  2^i \lambda\biggr)^p  \mathbf{1}_{U_B^{2^j \lambda}}
+  2^p g^p \mathbf{1}_{B\setminus U^{4^k\lambda}}\\
&\le \frac{C(c_\nu,p)}{k^p} \sum_{j=k}^{2k-1}  (2^{j}\lambda)^p  \mathbf{1}_{U_B^{2^j \lambda}}
+  2^p g^p \mathbf{1}_{B\setminus U^{4^k\lambda}}\,.
\end{split}
\end{equation*}
Now $h\in \Lip(X)$ coincides with $u$ on $B\setminus U^{2^k\lambda}$. Recall also that ${\hat g}\in\mathcal{D}^{p}(h)$.
Hence, by
the assumed $(p,p)$-Poincar\'e inequality,
\begin{align*}
\frac{1}{\diam(B)^{p}}&\int_{B\setminus U^{2^k\lambda}}\lvert u(x)-u_{B\setminus U^{2^k\lambda};\nu}\rvert^p\,d\nu(x)\\
&\le \frac{2^{p}}{\diam(B)^{p}}\int_{B} \lvert h(x)-h_{B;\nu}\rvert^p\,d\nu(x)
 \le 2^{p}K_{p,p}\frac{\nu(B)}{\mu(B)}\int_{B} {\hat g}(x)^p \,d\mu(x)\\
&\le \frac{C(c_\nu,p)K_{p,p}}{k^p}\frac{\nu(B)}{\mu(B)} \sum_{j=k}^{2k-1}  (2^{j}\lambda)^p \mu(U_B^{2^j \lambda})
+4^p K_{p,p}\frac{\nu(B)}{\mu(B)}\int_{B\setminus U^{4^k\lambda}} g(x)^p\,d\mu(x)\,.
\end{align*}
The desired local inequality \eqref{e.local} follows by combining the estimates above.
\end{proof}

\subsection{Completing the proof of Theorem \ref{t.main_local}}\label{ss.main_local}

Recall that  $u\colon X\to \R$ is a Lipschitz function and
 that $M^{\nu} u=M^{\nu,p}_{\mathcal{B}_0}u$.
Fix a function $g\in\mathcal{D}^{p}(u)$.
Observe that the left-hand side of 
inequality \eqref{e.loc_des} is finite since $u\in \Lip(X)$. Without loss of generality, we may further assume
that it is nonzero.
By Lemma \ref{l.big_to_small_ball},
\[
\int_{B_0} \bigl(M^{\nu} u(x)\bigr)^{p-\varepsilon}\,d\mu(x)
\le C(p,c_\nu,c_\mu)\int_{B_0} \bigl(M^{\nu}_{\textup{loc}} u(x)\bigr)^{p-\varepsilon}\,d\mu(x)\,.
\]
Observe  also  that
\[
\bigl(M^{\nu}_{\textup{loc}} u(x)\bigr)^{p-\varepsilon}\le \sum_{Q\in\mathcal{W}_0} \mathbf{1}_Q(x)\bigl(M^{\nu}_Q u(x)\bigr)^{p-\varepsilon}
\]
for every $x\in B_0$. Hence,
\begin{align*}
\int_{B_0}  \bigl(M^{\nu}_{\textup{loc}} u(x)\bigr)^{p-\varepsilon}\,d\mu(x)
\le \sum_{Q\in\mathcal{W}_0} \int_{Q} \bigl(M^{\nu}_{Q} u(x)\bigr)^{p-\varepsilon}\,d\mu(x)\,.
\end{align*}
At this stage, we fix a ball $Q\in\mathcal{W}_0$ and write  
the corresponding integral as follows:
\begin{align*}
\int_{Q} \bigl(M^{\nu}_{Q} u(x)\bigr)^{p-\varepsilon}\,d\mu(x) =
(p-\varepsilon)\int_0^\infty \lambda^{p-\varepsilon} \mu(Q^\lambda)\,\frac{d\lambda}{\lambda}\,.
\end{align*}
Since $Q^\lambda=Q=Q^{2\lambda}$ for every $\lambda \in (0,\lambda_Q/2)$, 
we find that
\begin{align*}
(p-\varepsilon)\int_0^{\lambda_Q/2} \lambda^{p-\varepsilon} \mu(Q^\lambda)\,\frac{d\lambda}{\lambda}&=\frac{(p-\varepsilon)}{2^{p-\varepsilon}}\int_0^{\lambda_Q/2} (2\lambda)^{p-\varepsilon} \mu(Q^{2\lambda})\,\frac{d\lambda}{\lambda}\\
&\le \frac{(p-\varepsilon)}{2^{p-\varepsilon}}\int_0^{\infty} \sigma^{p-\varepsilon} \mu(Q^{\sigma})\,\frac{d\sigma}{\sigma}
\\&=\frac{1}{2^{p-\varepsilon}}\int_Q \bigl(M^{\nu}_{Q} u(x)\bigr)^{p-\varepsilon}\,d\mu(x)\,.
\end{align*}
On the other hand, by Lemma \ref{l.mainl_local}, for each $\lambda>\lambda_Q/2$,
\begin{align*}
\lambda^{p-\varepsilon} \mu(Q^\lambda)
\le C_2\lambda^{-\varepsilon}\biggl[(2^{-k\alpha} & +\Psi(C_22^{-k\alpha}))(2^{k}\lambda)^p\mu(U^{2^k \lambda}_{Q^*})\\
& + \frac{K_{p,p}}{k^p} \sum_{j=k}^{2k-1}  (2^{j}\lambda)^p \mu(U^{2^j \lambda}_{Q^*})
+K_{p,p}\int_{U^{\lambda}_{Q^*}\setminus U^{4^k\lambda}} g^p\,d\mu\,\biggr]\,,
\end{align*}
where  $C_2=C(\delta,p,c_\mu,c_\nu,c_{\nu,\mu},\lVert \Psi\rVert_\infty)>0$.
Since $p-\varepsilon>1$, it follows that 
\begin{align*}
\int_Q \bigl( M^{\nu}_{Q} u(x)\bigr)^{p-\varepsilon}\,d\mu(x)
&\le 2(p-\varepsilon)\int_{\lambda_Q/2}^\infty \lambda^{p-\varepsilon} \mu(Q^\lambda)\,\frac{d\lambda}{\lambda}\\
&\le 2pC_2\bigl(I_1(Q) + I_2(Q) + I_3(Q)\bigr)\,,
\end{align*}
where
\begin{align*}
I_1(Q)&=(2^{-k\alpha}+\Psi(C_22^{-k\alpha}))2^{k\varepsilon}\int_{0}^\infty (2^{k}\lambda)^{p-\varepsilon} \mu(U^{2^k \lambda}_{Q^*})  \,\frac{d\lambda}{\lambda}\,,\qquad \\
I_2(Q)&= \frac{K_{p,p}}{k^p}\sum_{j=k}^{2k-1} 2^{j\varepsilon}\int_0^\infty (2^j \lambda)^{p-\varepsilon} \mu(U^{2^j \lambda}_{Q^*})\,\frac{d\lambda}{\lambda}\,, \\
I_3(Q)&= K_{p,p}
\int_0^\infty \lambda^{-\varepsilon} \int_{U^{\lambda}_{Q^*}\setminus U^{4^k\lambda}} g(x)^p\,d\mu(x)\,\frac{d\lambda}{\lambda}\,.
\end{align*}
By (W2) we have $\sum_{Q\in\mathcal{W}_0} \mathbf{1}_{Q^*}\le C(c_\nu)\mathbf{1}_{B_0}$. Hence, we can now continue to estimate as follows. First,
\begin{align*}
\sum_{Q\in\mathcal{W}_0} I_1(Q)&\le C(c_\nu)
(2^{-k\alpha}+\Psi(C_22^{-k\alpha}))2^{k\varepsilon}\int_0^\infty (2^k \lambda)^{p-\varepsilon}\mu(U^{2^k\lambda})\frac{d\lambda}{\lambda}\\
&\le \frac{C(c_\nu)}{p-\varepsilon}(2^{-k\alpha}+\Psi(C_22^{-k\alpha}))2^{k\varepsilon}\int_{B_0} \bigl(M^{\nu} u(x)\bigr)^{p-\varepsilon}\,d\mu(x)\,.
\end{align*}
Here the last upper bound is of the required form, since
$p-\varepsilon>1$.
Second, 
\begin{align*}
\sum_{Q\in\mathcal{W}_0} I_2(Q)&
\le C(c_\nu)\frac{K_{p,p}}{k^p}\sum_{j=k}^{2k-1}  2^{j\varepsilon}\int_0^\infty (2^j \lambda)^{p-\varepsilon} \mu(U^{2^j \lambda})\,\frac{d\lambda}{\lambda}\\
&\le \frac{C(c_\nu)K_{p,p}}{k^p(p-\varepsilon)}\Biggl(\sum_{j=k}^{2k-1} 2^{j\varepsilon}\Biggr)\int_{B_0} \bigl(M^{\nu} u(x)\bigr)^{p-\varepsilon}\,d\mu
\\&\le C(c_\nu) \frac{K_{p,p}4^{k\varepsilon}}{k^{p-1}}\int_{B_0} \bigl(M^{\nu} u(x)\bigr)^{p-\varepsilon}\,d\mu\,.
\end{align*}
Third, by using also Fubini's theorem,
\begin{align*}
\sum_{Q\in\mathcal{W}_0} I_3(Q)&
\le C(c_\nu)K_{p,p}\int_{B_0\setminus \{M^{\nu} u=0\}} \biggl(   \int_0^\infty \lambda^{-\varepsilon}  
\mathbf{1}_{U^{\lambda}\setminus U^{4^k\lambda}}(x)  \frac{d\lambda}{\lambda}  \biggr)g(x)^p\,d\mu(x)\\
& \le  C(c_\nu)C(k,\varepsilon) K_{p,p}\int_{B_0\setminus \{M^{\nu} u=0\}}g(x)^p (M^{\nu} u(x))^{-\varepsilon}\,d\mu(x)\,.\end{align*}
Combining the estimates above, we arrive at the desired inequality
\eqref{e.loc_des}.

This concludes the proof of Theorem \ref{t.main_local}, and hence we have now also completed the proof of Theorem~\ref{t.main_thm}.
\qed

\section{Modified Lipschitz extension}\label{s.Whitney_ext}

We now turn to the proof of Theorem~\ref{t.global_epsilon_0}, related to the self-improvement of maximal
Poincar\'e inequalities. The most important implication in Theorem~\ref{t.global_epsilon_0}, (C) $\Longrightarrow$ (D),
is in fact a consequence of the more general Theorem~\ref{t.global_improvement}, which we state and prove in the
following Section~\ref{s.global}. However, in the proof of the latter result we need to be able to extend Lipschitz
functions in such a way that the extension does not decrease the global sharp maximal function too much,
and thus we cannot immediately apply the McShane extension~\eqref{McShane}.
The purpose of this rather independent section is to prove the technical Theorem~\ref{t.monotone_extension} 
by constructing one example of a suitable extension. 
Nevertheless, we emphasize that the proof of Theorem~\ref{t.global_improvement} works for any $C\lambda$-Lipschitz extension satisfying the crucial 
estimate~\eqref{e.curious_ineq} below.

Fix $1\le p<\infty$ and let $M^{\nu,p}u=M^{\nu,p}_{\mathcal{X}}u$ be the
maximal function of $u\in \Lip(X)$ with respect to  
the family
\[\mathcal{X}=\{B\subset X\,:\, B\text{ is a ball}\} \]
of all balls in $X$, as in~\eqref{eq.global_balls}.

\begin{theorem}\label{t.monotone_extension}
Let $1\le p<\infty$. Assume that $X=(X,d,\nu,\mu)$ is a geodesic two-measure space
and that $u\in \Lip(X)$ has a bounded support.  Let $\lambda>0$ and denote
\begin{equation}\label{e.level_set}
U_\lambda = \{x\in X\,:\, M^{\nu,p} u(x)>\lambda\}\,.
\end{equation}
Then there exists a $C\lambda$-Lipschitz function $u_\lambda\colon X\to \R$ such that
$u|_{X\setminus U_\lambda}=u_\lambda|_{X\setminus U_\lambda}$ and 
\begin{equation}\label{e.curious_ineq}
M^{\nu,p} u(x)\le CM^{\nu,p}u_\lambda(x)
\end{equation}
for all $x\in X\setminus U_\lambda$. Here the constant $C=C(c_\nu,p)>0$
is independent of $u$ and $\lambda$.
\end{theorem}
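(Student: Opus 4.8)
The plan is to define $u_\lambda$ by first extending $u$ from $X\setminus U_\lambda$ into $U_\lambda$ by a standard Whitney‐type Lipschitz extension, and then adding bump functions supported on the Whitney balls of $U_\lambda$ which prevent the sharp maximal function from collapsing inside $U_\lambda$.

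\textbf{Preliminaries.} First I would record that $U_\lambda$ is open and, apart from a trivial case, a proper subset of $X$. The quotient $\diam(B)^{-1}\bigl(\vint_{B}|u-u_{B;\nu}|^{p}\,d\nu\bigr)^{1/p}$ depends only on $B$, so if some ball $B\ni x$ makes it exceed $\lambda$ then $M^{\nu,p}u>\lambda$ on all of $B$; hence $U_\lambda$ is open. Moreover a ball with nonzero quotient must meet $\spt u$, so $M^{\nu,p}u(x)\le 2\lVert u\rVert_{\infty}/\dist(x,\spt u)\to 0$ as $\dist(x,\spt u)\to\infty$, whence $X\setminus U_\lambda\neq\emptyset$ unless $X$ is bounded with $U_\lambda=X$ (in which case both conclusions are vacuous and one may take $u_\lambda\equiv 0$); the case $U_\lambda=\emptyset$ is settled by $u_\lambda=u$. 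So I would assume $\emptyset\neq U_\lambda\subsetneq X$ and fix a Whitney ball cover $\mathcal{W}$ of $U_\lambda$ exactly as in Section~\ref{ss.Whitney}, with $X\setminus U_\lambda$ in place of $X\setminus B_{0}$ and $r_{Q}=\dist(x_{Q},X\setminus U_\lambda)/128$, with its bounded overlap properties.

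\textbf{Base extension.} Next I would prove a global analogue of Lemma~\ref{l.arm_local} — the same dyadic telescoping, using only the doubling of $\nu$ — giving that $u|_{X\setminus U_\lambda}$ is $C(c_\nu,p)\lambda$‐Lipschitz and, more precisely, $|u(z)-u_{B;\nu}|\le C\lambda\diam(B)$ whenever $z\in B$ and $z\notin U_\lambda$; consequently $|u_{Q;\nu}-u(\zeta)|\le C\lambda r_{Q}$ for $Q\in\mathcal{W}$ and $\zeta\in X\setminus U_\lambda$ within $Cr_{Q}$ of $x_{Q}$. I would then take $v\colon X\to\R$ to be a Whitney‐type extension of $u|_{X\setminus U_\lambda}$ (via a Lipschitz partition of unity subordinate to $\{2Q\}_{Q\in\mathcal{W}}$, or simply the McShane extension~\eqref{McShane}): then $v\in\Lip(X)$, $\Lip(v)\le C\lambda$, $v=u$ on $X\setminus U_\lambda$, $|v(z)-u_{Q;\nu}|\le C\lambda r_{Q}$ on each $Q\in\mathcal{W}$, and in particular $\diam(B)^{-1}\bigl(\vint_{B}|v-v_{B;\nu}|^{p}\,d\nu\bigr)^{1/p}\le C\lambda$ for every ball $B$.

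\textbf{Bumps.} For each $Q=B(x_{Q},r_{Q})\in\mathcal{W}$ I would use Lemma~\ref{l.continuous} to choose $0<r_{Q}^{*}\le r_{Q}$ with $\nu(B(x_{Q},r_{Q}^{*}))=\tfrac12\nu(Q)$ and set $b_{Q}(z)=\lambda\max\{0,\,r_{Q}^{*}-2d(z,x_{Q})\}$, a $2\lambda$‐Lipschitz tent supported in $Q$, equal to $\lambda r_{Q}^{*}$ on $B(x_{Q},r_{Q}^{*}/4)$; the choice of $r_{Q}^{*}$ together with the doubling of $\nu$ makes $b_{Q}$ have $\nu$‐oscillation of order $\lambda r_{Q}^{*}$ over $Q$ (and over sub‐balls of $Q$ of radius $\gtrsim r_{Q}^{*}$), uniformly in how $\nu$ is distributed inside $Q$. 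I would then put $u_\lambda=v+\sum_{Q\in\mathcal{W}}b_{Q}$. Since every $b_{Q}$ vanishes off $U_\lambda$, $u_\lambda=u$ on $X\setminus U_\lambda$; and by the bounded overlap of the dilated Whitney balls together with the comparability of radii of overlapping ones, $\sum_{Q}b_{Q}$ is $C(c_\nu)\lambda$‐Lipschitz, so $u_\lambda$ is $C\lambda$‐Lipschitz. This gives every assertion of the theorem except the estimate~\eqref{e.curious_ineq}.

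\textbf{The estimate, and the main obstacle.} To prove~\eqref{e.curious_ineq}, fix $x\in X\setminus U_\lambda$, set $t=M^{\nu,p}u(x)$ and assume $t>0$. Since $X$ is geodesic and $\nu$ doubling, $M^{\nu,p}u(x)$ equals, up to a factor depending on $c_\nu$, the supremum over balls centered at $x$; so I would pick $\rho>0$ minimal (up to a factor $2$) with $\diam(B(x,\rho))^{-1}\bigl(\vint_{B(x,\rho)}|u-u_{B(x,\rho);\nu}|^{p}\,d\nu\bigr)^{1/p}>t/2$ and write $B=B(x,\rho)$. If $B\subset X\setminus U_\lambda$ then $u_\lambda=u$ on $B$ and $M^{\nu,p}u_\lambda(x)>t/2$, which is more than enough. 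If instead $B$ meets $U_\lambda$, then some Whitney ball $Q$ has $Q\cap 2B\neq\emptyset$, forcing $r_{Q}\lesssim\diam(B)$; I would take such a $Q$ of nearly maximal radius and look for a ball $B'\ni x$ with $B'\supset Q$ on which $\diam(B')^{-1}\bigl(\vint_{B'}|u_\lambda-(u_\lambda)_{B';\nu}|^{p}\,d\nu\bigr)^{1/p}\ge c\,t$, using that the oscillation of $v$ and of the bumps over $B'$ is $\le C\lambda\diam(B')$ and that $t\le\lambda$ because $x\notin U_\lambda$. The hard part will be exactly this last step: one has to quantify — via the measure–radius estimates~\eqref{e.radius_measure} and the doubling of $\nu$ — that when $B$ genuinely dips into $U_\lambda$, the amount of oscillation of $u$ over $B$ that is destroyed by flattening inside the Whitney balls is no larger than what the bumps $b_{Q}$ restore, the point being that the self‐referential relation $M^{\nu,p}u\approx\lambda$ near $\partial U_\lambda$ is precisely what makes these two quantities comparable; and one must have chosen the bump profiles (this is why Lemma~\ref{l.continuous} is used, to split the $\nu$‐mass) so that the lower bound on the $B'$‐oscillation of $u_\lambda$ is uniform with respect to the position of the $\nu$‐mass inside each Whitney ball. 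Granting this comparison, $M^{\nu,p}u_\lambda(x)\ge c\,M^{\nu,p}u(x)$ in all cases, which is~\eqref{e.curious_ineq}.
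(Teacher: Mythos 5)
Your architecture coincides with the paper's: a Whitney cover of $U_\lambda$, a Whitney/McShane base extension, and bump functions on the Whitney balls to keep the sharp maximal function from collapsing. The problem is that the entire content of the theorem is the estimate \eqref{e.curious_ineq}, and that is exactly the step you leave as ``the hard part \dots\ granting this comparison''. Moreover, your bump design would not support the argument even if the comparison were carried out. The paper's bumps satisfy three properties that are used jointly: support in $\tfrac{1}{10}Q$ with $\{\tfrac{1}{10}Q\}$ pairwise disjoint, mean zero $\int_Q b_Q\,d\nu=0$, and the normalization $\int_Q|b_Q|^p\,d\nu\ge 2^p\kappa^p r_Q^p\nu(Q)$, where $\kappa=C(c_\nu)\lambda$ is the Lipschitz constant of the base extension. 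These give Lemma~\ref{l.aux} by a one-line reverse triangle inequality: since the averages of $u_\lambda$ and $v_\lambda$ over $\tfrac1{10}Q$ agree, one gets $\lVert u_\lambda-(u_\lambda)_{\frac1{10}Q;\nu}\rVert_{L^p}\ge\lVert b_Q\rVert_{L^p}-\lVert v_\lambda-(v_\lambda)_{\frac1{10}Q;\nu}\rVert_{L^p}\ge\kappa r_Q\nu(Q)^{1/p}$. Your single nonnegative tent of height $\lambda r_Q^*$ is neither mean zero nor normalized against $\kappa$: its $L^p(\nu)$-oscillation is $c(c_\nu,p)\,\lambda r_Q\nu(Q)^{1/p}$ with a possibly small $c$, whereas the oscillation of the base extension over the same ball can be as large as $C(c_\nu)\lambda r_Q\nu(Q)^{1/p}$ with $C(c_\nu)$ large, so cancellation can annihilate the bump. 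In addition your supports $B(x_Q,r_Q^*/2)$ for neighbouring Whitney balls may overlap, and overlapping nonnegative tents can sum to a nearly constant function on $Q$, again destroying the oscillation. Both defects are repairable (rescale the tent by a large $C(c_\nu,p)$ and shrink its support into the pairwise disjoint balls $\tfrac1{10}Q$), but as written the claimed lower bound ``uniform in how $\nu$ is distributed'' fails.

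The second, more serious gap is the comparison itself. Your plan is to take one Whitney ball $Q$ of nearly maximal radius meeting $2B$ and exhibit a single ball $B'\supset Q$ containing $x$ on which $u_\lambda$ oscillates by $c\,t\diam(B')$. This cannot work in general: the oscillation of $u$ over $B$ may be spread over many Whitney balls, none of which --- not even the largest --- carries a fixed fraction of it, and the lower bound obtainable from one $Q$ is of order $\lambda^p r_Q^p\nu(Q)/(\diam(B)^p\nu(B))$, which can be arbitrarily small compared with $t^p=M^{\nu,p}u(x)^p$. The paper's Lemma~\ref{l.monotone_extension} instead proves, for every ball $B$ with $B\setminus U_\lambda\ne\emptyset$, the integrated inequality $\int_B|u-u_{B;\nu}|^p\,d\nu\le C\int_{2B}|u_\lambda-(u_\lambda)_{2B;\nu}|^p\,d\nu$: one covers $B\cap U_\lambda$ by \emph{all} Whitney balls $Q$ meeting $B$ (each contained in $2B$), bounds each local contribution by $C\lambda^pr_Q^p\nu(Q)$ using that $Q$ lies within $Cr_Q$ of a point where $M^{\nu,p}u\le\lambda$, converts this via Lemma~\ref{l.aux} into $C\int_{\frac1{10}Q}|u_\lambda-(u_\lambda)_{2B;\nu}|^p\,d\nu$, and sums over the pairwise disjoint family $\{\tfrac1{10}Q\}$. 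This summation over the whole Whitney family, rather than a single witness ball, is the idea missing from your sketch; without it the proposal does not constitute a proof of \eqref{e.curious_ineq}.
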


The rest 
of this section is devoted to the proof of
Theorem \ref{t.monotone_extension}. In Section \ref{s.whitney_first}
we first construct a standard Whitney type Lipschitz extension $v_\lambda$ of $u|_{X\setminus U_\lambda}$, which 
we 
then modify in Section \ref{s.modification} in order to ensure
that inequality \eqref{e.curious_ineq} is valid.
At the end of Section \ref{s.modification}  we collect all of the required estimates and provide a proof for  
Theorem \ref{t.monotone_extension}.
Most of Section \ref{s.whitney_first} is standard, but  
we provide several details  --- both for the convenience of the reader 
and as a background for Section \ref{s.modification}.

At this stage, we fix a function $u\in \Lip(X)$ with a bounded support, a number $\lambda>0$, and the corresponding level set $U_\lambda$ 
as in Theorem \ref{t.monotone_extension}. 
Since the function $u$ has a bounded support, it is straightforward to show that
$U_\lambda$ is a bounded set.
We will  also  need the fact
that there is a constant $ C(c_\nu)>0$ for which
\begin{equation}\label{e.msharp}
\lvert u(x)-u(y)\rvert\le C(c_\nu)d(x,y)(M^{\nu,p} u(x)+ M^{\nu,p}u(y))
\end{equation}
whenever $x,y\in X$.
The proof of inequality \eqref{e.msharp} is based on a standard `telescoping' argument; cf.\ 
\cite[Lemma 4.12]{KinnunenLehrbackVahakangasZhong}.
As a consequence of inequality \eqref{e.msharp}, in the sequel we are allowed to assume that
$\emptyset\not=U_\lambda\not=X$.

\subsection{Standard Whitney extension}\label{s.whitney_first}

Since $\emptyset\not=U_\lambda\not=X$ is an open set in $X$, it admits a Whitney ball cover; cf.\ Section \ref{ss.Whitney}.   More specifically,
there is a   countable   family
$\mathcal{W}_\lambda=\mathcal{W}_\lambda(U_\lambda)$
of Whitney balls $Q\in\mathcal{W}_\lambda$.
These balls are of the form $Q=B(x_Q,r_Q)\in\mathcal{W}_\lambda$,
with
$x_Q\in U_\lambda$ and 
\[
r_Q=\frac{\dist(x_Q, X\setminus U_\lambda)}{128}>0\,.
\]
The $4$-dilated Whitney ball is denoted 
again  by $Q^*=4Q=B(x_Q,4 r_Q)$ whenever $Q\in\mathcal{W}_\lambda$,
 and as in Section \ref{ss.Whitney}, the Whitney balls have 
the following covering properties with bounded overlap: \begin{equation}\label{e.w_u_l}
U_\lambda=\bigcup_{Q\in\mathcal{W}_\lambda} Q\,,\qquad 
\sum_{Q\in\mathcal{W}_\lambda} \mathbf{1}_{Q^*}\le C(c_\nu)\mathbf{1}_{U_\lambda}\,.
\end{equation}
Moreover, the Whitney ball construction
can be done in such a way that $\{\frac{1}{10}Q\,:\,Q\in\mathcal{W}_\lambda\}$ is a pairwise disjoint family.

Next we construct a partition of unity with respect to the above  Whitney balls. Fix 
$Q\in\mathcal{W}_\lambda$, and define a function $\hat \psi_Q\colon X\to [0,1]$
 by setting for each $x\in X$  that 
\begin{align*}
\hat{\psi}_Q(x)&=\min\left\{1,\max\left\{0,2-\frac{d(x,x_Q)}{r_Q}\right\}\right\}
\\&=\begin{cases}
1\,,\qquad & d(x,x_Q)< r_Q\,;\\
2-\frac{d(x,x_Q)}{r_Q}\,,\qquad & r_Q \le d(x,x_Q)\le 2r_Q\,;\\
0\,,\qquad & d(x,x_Q)>2r_Q\,.
\end{cases}
\end{align*}
Observe that $\hat \psi_Q$ is $(1/r_Q)$-Lipschitz, $\hat \psi_Q=1$ in $Q$, 
and $\hat \psi_Q=0$ in $X\setminus Q^*$.
We then define a function $\psi_Q\colon U_\lambda\to [0,1]$  by
\begin{equation}\label{e.psi_def}
\psi_Q(x)=\frac{\hat \psi_Q(x)}{\sum_{P\in\mathcal{W}_\lambda} \hat \psi_P(x)}\,,\qquad x\in U_\lambda\,.
\end{equation}
The function $\psi_Q$ is well-defined, 
 since, by~\eqref{e.w_u_l}, for each $x\in U_\lambda$ 
there is $P_x\in\mathcal{W}_\lambda$ such that $x\in P_x$. 
Thus  $\hat \psi_{P_x}(x)=1$,  and so we see that 
\begin{equation}\label{e.bmass}
1\le \sum_{P\in\mathcal{W}_\lambda} \hat \psi_P(x)\le \sum_{P\in\mathcal{W}_\lambda} \mathbf{1}_{P^*}(x)\le C(c_\nu)\,,\qquad x\in U_\lambda\,.
\end{equation}

\begin{lemma}\label{l.Q-Lipschitz}  
There is a  constant $C=C(c_\nu)>0$ such that the function $\psi_Q\colon U_\lambda\to [0,1]$ 
is $(C/r_Q)$-Lipschitz in $U_\lambda$ whenever $Q\in\mathcal{W}_\lambda$.  
\end{lemma}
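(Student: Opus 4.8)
The plan is to write $\psi_Q=\hat\psi_Q/S$ on $U_\lambda$, where $S=\sum_{P\in\mathcal{W}_\lambda}\hat\psi_P$, and to exploit three facts: that $1\le S\le C(c_\nu)$ on $U_\lambda$ by~\eqref{e.bmass}; that each $\hat\psi_P$ is $(1/r_P)$-Lipschitz with $0\le\hat\psi_P\le 1$ and $\hat\psi_P=0$ outside $B(x_P,2r_P)\subset P^*$; and that locally only boundedly many of the $\hat\psi_P$ are active. The estimate then reduces to a quotient rule computation once the local finiteness is made quantitative.

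First I would record the standard fact that neighbouring Whitney balls have comparable radii and that each Whitney ball has only boundedly many neighbours. Concretely, if $P,Q\in\mathcal{W}_\lambda$ satisfy $P^*\cap Q^*\neq\emptyset$, then picking $z\in P^*\cap Q^*$ and combining the triangle inequality with the defining identities $128r_Q=\dist(x_Q,X\setminus U_\lambda)$ and $128r_P=\dist(x_P,X\setminus U_\lambda)$ gives $c_0^{-1}r_Q\le r_P\le c_0 r_Q$ for an absolute constant $c_0>1$. Writing $N(Q)=\{P\in\mathcal{W}_\lambda:P^*\cap Q^*\neq\emptyset\}$, one then has $\tfrac{1}{10}P\subset B(x_Q,Cr_Q)$ and $\nu(\tfrac{1}{10}P)\ge C(c_\nu)^{-1}\nu(B(x_Q,Cr_Q))$ for every $P\in N(Q)$, by the doubling property of $\nu$; since $\{\tfrac{1}{10}P\}_{P\in\mathcal{W}_\lambda}$ is pairwise disjoint, summing these lower bounds over $P\in N(Q)$ yields $\#N(Q)\le C(c_\nu)$.

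Next I would fix $Q\in\mathcal{W}_\lambda$ and $x,y\in U_\lambda$ and dispose of two trivial cases: if $d(x,y)\ge r_Q$ then $|\psi_Q(x)-\psi_Q(y)|\le 1\le d(x,y)/r_Q$, and if $\hat\psi_Q(x)=\hat\psi_Q(y)=0$ then $\psi_Q(x)=\psi_Q(y)=0$. In the remaining case $d(x,y)<r_Q$ and, after possibly swapping $x$ and $y$, $\hat\psi_Q(x)>0$, so $x\in B(x_Q,2r_Q)$ and hence $x,y\in B(x_Q,3r_Q)\subset Q^*$. Then for $P\notin N(Q)$ the function $\hat\psi_P$ vanishes on $Q^*$, so only indices in $N(Q)$ contribute to $S(x)-S(y)=\sum_{P\in N(Q)}(\hat\psi_P(x)-\hat\psi_P(y))$; estimating each summand by $d(x,y)/r_P\le c_0\,d(x,y)/r_Q$ and using $\#N(Q)\le C(c_\nu)$ gives $|S(x)-S(y)|\le C(c_\nu)\,d(x,y)/r_Q$. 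Finally I would expand
\[
\psi_Q(x)-\psi_Q(y)=\frac{S(y)\bigl(\hat\psi_Q(x)-\hat\psi_Q(y)\bigr)+\hat\psi_Q(y)\bigl(S(y)-S(x)\bigr)}{S(x)S(y)}
\]
and bound it using $1\le S\le C(c_\nu)$, $0\le\hat\psi_Q\le 1$, $|\hat\psi_Q(x)-\hat\psi_Q(y)|\le d(x,y)/r_Q$, and the estimate for $|S(x)-S(y)|$ just obtained, which yields $|\psi_Q(x)-\psi_Q(y)|\le C(c_\nu)\,d(x,y)/r_Q$.

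This is routine Whitney-partition-of-unity bookkeeping, so I do not expect a serious obstacle; the only points requiring a little care are the comparability of radii of neighbouring Whitney balls, the uniform bound $\#N(Q)\le C(c_\nu)$ (the mildest use of the doubling hypothesis on $\nu$), and the two reductions that let us restrict attention to points $x,y\in Q^*$ with $d(x,y)<r_Q$ before applying the quotient rule.
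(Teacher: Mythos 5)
Your proposal is correct and follows essentially the same route as the paper: the quotient-rule expansion of $\psi_Q=\hat\psi_Q/S$, the lower bound $S\ge 1$ from~\eqref{e.bmass}, the comparability of radii of neighbouring Whitney balls, and the bounded overlap of the dilated balls. The only difference is cosmetic --- you reduce to the case $d(x,y)<r_Q$ with $\hat\psi_Q$ not identically zero at $\{x,y\}$, whereas the paper splits according to whether $x$ and $y$ lie in $Q^*$ --- and both case analyses lead to the same estimate.
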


\begin{proof}
Fix $Q\in\mathcal{W}_\lambda$
and  $x,y\in U_\lambda$.
Let us first consider the case $x,y\in Q^*$.
  If $P\in\mathcal{W}_\lambda$ and $\{x,y\}\cap P^*\not=\emptyset$,  then it is straightforward to check that   $r_P/2\le r_Q\le 2r_P$.
Hence, by  \eqref{e.w_u_l} and inequality 
$\mathbf{1}_{U_\lambda}\le \sum_{P\in\mathcal{W}_\lambda}\hat\psi_P$, we obtain that
\begin{align*}
\lvert \psi_Q(x)-\psi_Q(y)\rvert&= \left\lvert 
\frac{\hat \psi_Q(x)\sum_{P\in\mathcal{W}_\lambda}\hat \psi_P(y)-\hat \psi_Q(y)\sum_{P\in\mathcal{W}_\lambda}\hat \psi_P(x)}{\sum_{P\in\mathcal{W}_\lambda}\hat\psi_P(y)\sum_{P\in\mathcal{W}_\lambda}\hat \psi_P(x)}\right\rvert\\
&\le \left\lvert \hat \psi_Q(x)\sum_{P\in\mathcal{W}_\lambda}\hat \psi_P(y)-\hat \psi_Q(y)\sum_{P\in\mathcal{W}_\lambda}\hat \psi_P(x)\right\rvert\\
&=\left\lvert\sum_{P\in\mathcal{W}_\lambda} \left( \hat\psi_Q(x)\bigl(\hat\psi_P(y)-\hat\psi_P(x)\bigr)+ \hat\psi_P(x)\bigl(\hat\psi_Q(x)-\hat\psi_Q(y)\bigl)\right)\right\rvert\\
&\le \sum_{P\in\mathcal{W}_\lambda}
\Bigr( \hat\psi_Q(x)\left\lvert \hat\psi_P(y)-\hat\psi_P(x)\big\rvert+ \hat\psi_P(x)\big\lvert\hat\psi_Q(x)-\hat\psi_Q(y)\right\rvert\Bigr)\\
&\le \sum_{\substack{P\in\mathcal{W}_\lambda\\\{x,y\}\cap P^*\not=\emptyset}}
\left( \frac{\hat\psi_Q(x)}{r_P}+ \frac{\hat\psi_P(x)}{r_Q}\right)d(x,y) 
\le \frac{C(c_\nu)}{r_Q} d(x,y)\,.
\end{align*}
On the other hand, if $x\in Q^*$ and $y\not\in Q^*$, then
\[
\lvert \psi_Q(x)-\psi_Q(y)\rvert 
=\psi_Q(x)\le \hat \psi_Q(x)=\lvert \hat \psi_Q(x)-\hat\psi_Q(y)\rvert\le \frac{d(x,y)}{r_Q}\,.
\]
The case $x\not\in Q^*$ and $y\in Q^*$ is treated in a similar way.
Finally, if $x,y\not\in Q^*$, then the Lipschitz condition
is trivially valid since $\lvert \psi_Q(x)-\psi_Q(y)\rvert=0$.
\end{proof}

The preliminary extension of $u$ is now
defined to be the function $v_\lambda\colon X\to \R$,
\begin{equation}\label{e.vl_def}
v_\lambda(x)=\begin{cases}
\sum_{Q\in\mathcal{W}_\lambda}u_{Q;\nu}\psi_Q(x)\,,\qquad &x\in U_\lambda\,,\\
u(x)\,,\qquad &x\in X\setminus U_\lambda\,;
\end{cases}
\end{equation}
here we  recall that $u_{Q;\nu}=\vint_Q u(y)\,d\nu(y)$ for every $Q\in\mathcal{W}_\lambda$.

By the above  definition and  the  inequality in \eqref{e.w_u_l}, it is clear that $v_\lambda\colon X\to\R$ is a well defined extension 
of $u|_{X\setminus U_\lambda}$. In  the following lemma
 we show that this extension is a Lipschitz function.

\begin{lemma}\label{l.v_Lip}
The function $v_\lambda\colon X\to \R$
is $C\lambda$-Lipschitz in $X$ with $C=C(c_\nu)>0$. 
\end{lemma}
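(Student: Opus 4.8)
The plan is a case analysis governed by the positions of $x$ and $y$ relative to $U_\lambda$, and the crux is that $v_\lambda$ differs from $u$ only on $U_\lambda$, where $M^{\nu,p}u$ can be arbitrarily large, so that the pointwise bound \eqref{e.msharp} is useless there and the Lipschitz control of $v_\lambda$ must instead come through the Whitney averages $u_{Q;\nu}$. I would first isolate two building blocks. (i) If a ball $B$ meets $X\setminus U_\lambda$, pick $z\in B\setminus U_\lambda$; since $z\in B\in\mathcal{X}$ and $M^{\nu,p}u(z)\le\lambda$, Jensen's inequality gives $\vint_B\lvert u-u_{B;\nu}\rvert\,d\nu\le\diam(B)\lambda$. (ii) Iterating (i) together with the doubling of $\nu$: if $Q,Q'\in\mathcal{W}_\lambda$ with $Q^*\cap Q'^*\ne\emptyset$, then $r_Q\approx r_{Q'}$ and $\lvert u_{Q;\nu}-u_{Q';\nu}\rvert\le C(c_\nu)\lambda r_Q$ — compare both averages to the average over $256Q$, which contains $Q$ and $Q'$ and meets $X\setminus U_\lambda$ because $\dist(x_Q,X\setminus U_\lambda)=128 r_Q$; and chaining (i) over the balls $B(z,2^{-j}\rho)$, $j\ge 0$, shrinking to a point $z\in X\setminus U_\lambda$ (using continuity of $u$) yields $\lvert u_{\hat B;\nu}-u(z)\rvert\le C(c_\nu)\lambda\,r_{\hat B}$ for every ball $\hat B\ni z$.

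From (ii), the partition of unity \eqref{e.vl_def}, and the bounded overlap in \eqref{e.w_u_l} I would deduce a sublemma: if $x\in Q^*$ with $Q\in\mathcal{W}_\lambda$, then, since $\sum_P\psi_P(x)=1$ and only the at most $C(c_\nu)$ balls $P$ with $x\in P^*$ (hence $P^*\cap Q^*\ne\emptyset$) contribute,
\[
\lvert v_\lambda(x)-u_{Q;\nu}\rvert=\Bigl\lvert\sum_P(u_{P;\nu}-u_{Q;\nu})\psi_P(x)\Bigr\rvert\le C(c_\nu)\lambda r_Q .
\]
Combining this with the two building blocks gives, for every $x\in U_\lambda$, a boundary point $z_x\in X\setminus U_\lambda$ with $\dist(x,X\setminus U_\lambda)\le d(x,z_x)<2\dist(x,X\setminus U_\lambda)$ and $\lvert v_\lambda(x)-u(z_x)\rvert\le C(c_\nu)\lambda d(x,z_x)$: pass from $v_\lambda(x)$ to $u_{Q_x;\nu}$ for a Whitney ball $Q_x\ni x$ via the sublemma, and from $u_{Q_x;\nu}$ to $u(z_x)$ via the single ball $256Q_x$, which contains $z_x$ and meets $X\setminus U_\lambda$.

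With these at hand the estimate $\lvert v_\lambda(x)-v_\lambda(y)\rvert\le C(c_\nu)\lambda d(x,y)$ follows by cases. If $x,y\in X\setminus U_\lambda$, then $v_\lambda=u$ and \eqref{e.msharp} with $M^{\nu,p}u(x),M^{\nu,p}u(y)\le\lambda$ closes the case. If $x\in U_\lambda$ and $y\notin U_\lambda$, write $v_\lambda(x)-v_\lambda(y)=(v_\lambda(x)-u(z_x))+(u(z_x)-u(y))$ and estimate the first term by the boundary-point bound and the second by \eqref{e.msharp} (both $z_x,y\notin U_\lambda$), noting $\dist(x,X\setminus U_\lambda)\le d(x,y)$, so $d(x,z_x),d(z_x,y)\le C\,d(x,y)$. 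If $x,y\in U_\lambda$ there are two sub-cases: when $d(x,y)$ is small compared with $\dist(x,X\setminus U_\lambda)$, so that $y\in Q_x^*$ for a Whitney ball $Q_x\ni x$, subtract the constant $u_{Q_x;\nu}$ and estimate
\[
\lvert v_\lambda(x)-v_\lambda(y)\rvert=\Bigl\lvert\sum_P(u_{P;\nu}-u_{Q_x;\nu})\bigl(\psi_P(x)-\psi_P(y)\bigr)\Bigr\rvert
\]
using Lemma~\ref{l.Q-Lipschitz} (each $\psi_P$ is $(C/r_P)$-Lipschitz on $U_\lambda$), building block (ii), and the bounded overlap; when $d(x,y)$ is not small compared with $\dist(x,X\setminus U_\lambda)$ (and then also not small compared with $\dist(y,X\setminus U_\lambda)$), route through both boundary points, $v_\lambda(x)-v_\lambda(y)=(v_\lambda(x)-u(z_x))+(u(z_x)-u(z_y))+(u(z_y)-v_\lambda(y))$, and apply the boundary-point bound twice together with \eqref{e.msharp}.

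The main obstacle is exactly the behaviour deep inside $U_\lambda$, where $M^{\nu,p}u$ carries no information: there the Lipschitz bound for $v_\lambda$ has to be extracted entirely from the Whitney geometry, namely from the fact — encoded in building block (ii) and the telescoping from $\partial U_\lambda$ — that moving from one Whitney ball to a neighbour, or outward to the complement along concentric balls, changes the $\nu$-average of $u$ by at most $C(c_\nu)\lambda$ times the relevant radius. Making all the radius comparisons, doubling constants, and overlap counts line up is the only genuine bookkeeping in the argument.
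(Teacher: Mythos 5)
Your proposal is correct and follows essentially the same route as the paper's proof: the case analysis on the positions of $x,y$ relative to $U_\lambda$, the use of \eqref{e.msharp} outside $U_\lambda$, the reduction of the mixed and far-apart cases to nearby boundary points via the triangle inequality, and the partition-of-unity estimate deep inside $U_\lambda$ after subtracting a suitable constant average all mirror the paper's argument. The only differences are cosmetic (the paper subtracts $u_{B;\nu}$ with $B=B(x,6d)$ rather than $u_{Q_x;\nu}$, and handles the mixed case by a direct estimate of $\lvert u(y)-u_{Q;\nu}\rvert$ instead of routing through $z_x$), and your comparison constants and radius bookkeeping are sound.
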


\begin{proof}
If $x,y\in X\setminus U_\lambda$, then by inequality   \eqref{e.msharp} we have
\begin{align*}
\lvert v_\lambda(x)-v_\lambda(y)\rvert&=\lvert u(x)-u(y)\rvert
\\&\le C(c_\nu)d(x,y)\bigl(M^{\nu,p}u(x)+M^{\nu,p} u(y)\bigr)
\le C(c_\nu) \lambda d(x,y)\,. 
\end{align*}

Then we consider the case $x\in U_\lambda$
and $y\in X\setminus U_\lambda$.
Assume first that 
$d(x,y)\le 2\dist(x,X\setminus U_\lambda)$, and let 
$Q\in\mathcal{W}_\lambda$ be any Whitney ball such that  
$\psi_Q(x)\not=0$. Then 
$x\in Q^*$, and so 
\begin{align*}
d(x,y)&\le 2\dist(x,X\setminus U_\lambda) 
\le 2\bigl(d(x,x_Q)+\dist(x_Q,X\setminus U_\lambda)\bigr)\\
&\le 2(4r_Q+128 r_Q)=264r_Q\,.
\end{align*}
On the other hand, since $d(x_Q,x)<4r_Q$,
\[
d(x,y)\ge \dist(x,X\setminus U_\lambda)\ge 
\dist(x_Q,X\setminus U_\lambda)-d(x_Q,x)>128r_Q-4r_Q>r_Q\,.
\]
That is, we have
\[
r_Q< d(x,y)< 264r_Q\,.
\]
Since $x\in Q^*$ and $d(x,y)<264r_Q$, we find that $y\in 268Q$. 
Arguing as in \cite[Lemma 4.12]{KinnunenLehrbackVahakangasZhong}, 
we obtain that  
\[
\lvert u(y)-u_{Q;\nu}\rvert\le \lvert u(y)-u_{268Q;\nu}\rvert + \lvert u_{268Q;\nu}-u_{Q;\nu}\rvert\le C(c_\nu)r_Q M^{\nu,p}u(y)
\le C(c_\nu) \lambda d(x,y)
\]
whenever $Q\in\mathcal{W}_\lambda$ is such that $\psi_Q(x)\not=0$.  
Hence,
\begin{align*}
\lvert v_\lambda(x)-v_\lambda(y)\rvert
&= \bigg\lvert \sum_{Q\in\mathcal{W}_\lambda} u_{Q;\nu}\psi_Q(x)-u(y)\bigg\rvert
= \bigg\lvert \sum_{Q\in\mathcal{W}_\lambda} \psi_Q(x)\bigl(u_{Q;\nu}-u(y)\bigr)\bigg\rvert \\
&\le \sum_{Q\in\mathcal{W}_\lambda} \psi_Q(x)\lvert u_{Q;\nu}-u(y)\rvert\le C(c_\nu)\lambda d(x,y)\,.
\end{align*}
This concludes the proof of the lemma in the case when $x\in U_\lambda$ and $y\in X\setminus U_\lambda$ are such that $d(x,y)\le 2\dist(x,X\setminus U_\lambda)$.

Next we treat the case when $x\in U_\lambda$
and $y\in X\setminus U_\lambda$ satisfy
$d(x,y)> 2\dist(x,X\setminus U_\lambda)$.
Now there exists a point $z\in X\setminus U_\lambda$
such that
$d(x,z)\le 2\dist(x,X\setminus U_\lambda)$.
Since $d(x,z)<d(x,y)$ and $d(z,y)\le d(x,z)+d(x,y)<2d(x,y)$,
we can resort to  the already established cases as follows: 
\begin{align*}
\lvert v_\lambda(x)-v_\lambda(y)\rvert
&\le \lvert v_\lambda(x)-v_\lambda(z)\rvert
+\lvert v_\lambda(z)-v_\lambda(y)\rvert
\\&\le C(c_\nu)\lambda d(x,z)+C(c_\nu)\lambda d(z,y)
\le C(c_\nu)\lambda d(x,y)\,.
\end{align*}

The case $x\in X\setminus U_\lambda$ and $y\in U_\lambda$ 
follows from the preceding arguments via symmetry.

It remains to consider the case $x,y\in U_\lambda$.
By symmetry, we may clearly assume
that 
\begin{equation}\label{e.symm1}
\dist(y,X\setminus U_\lambda)\le \dist(x,X\setminus U_\lambda)\,.
\end{equation}
Denote $d=\dist(x,X\setminus U_\lambda)>0$.  
Now, if either one of the following two inequalities 
\begin{equation}\label{e.additional}
\dist(y,X\setminus U_\lambda)\ge d/5\qquad\text{ and }\qquad d(x,y)\le d
\end{equation}
fails, then we are essentially done. Namely,  
if $\dist(y,X\setminus U_\lambda)<d/5$, then
\[
d(x,y)\ge \dist(x,X\setminus U_\lambda)-\dist(y,X\setminus U_\lambda)> 4d/5\,.
\]
Moreover, then there exists a point $z\in X\setminus U_\lambda$
such that $d(y,z)\le 2d/5$, and thus 
\begin{equation}\label{e.opt1}
d(x,z)+d(z,y)\le d(x,y)+2d(y,z)\le d(x,y)+4d/5\le 2d(x,y)\,.
\end{equation}
On the other hand, if the second inequality in \eqref{e.additional} fails, that is, if $d(x,y)>d$, then we first choose 
a point $z\in X\setminus U_\lambda$ such that
$d(x,z)<2d$. In this case we then have
\begin{equation}\label{e.opt2}
d(x,z)+d(z,y)\le 2d(x,z)+d(x,y)< 4d+d(x,y)< 5d(x,y)\,.
\end{equation}
Having either inequality \eqref{e.opt1} or \eqref{e.opt2} for some $z\in X\setminus U_\lambda$,  
we can always resort to the previously established
cases in order to see that
\begin{align*}
\lvert v_\lambda(x)-v_\lambda(y)\rvert
&\le \lvert v_\lambda(x)-v_\lambda(z)\rvert + \lvert v_\lambda(z)-v_\lambda(y)\rvert \\&\le C(c_\nu)\lambda (d(x,z)+d(z,y))\le C(c_\nu)\lambda d(x,y)\,.
\end{align*}
Therefore, it suffices to establish the Lipschitz property of $v_\lambda$
when both inequalities
in \eqref{e.additional} are  valid. 

Denote $B=B(x,6d)$.
Fix $Q\in\mathcal{W}_\lambda$
such that $\{x,y\}\cap Q^*\not=\emptyset$ and let $z\in \{x,y\}\cap Q^*$. Then, by \eqref{e.symm1} and \eqref{e.additional}, we have
\[
d/5\le \dist(z,X\setminus U_\lambda)\le d\quad \text{ and } \quad r_Q\le 
\dist(z,X\setminus U_\lambda)<132r_Q\,.
\]
In particular, by Lemma \ref{l.Q-Lipschitz}, the function
$\psi_Q$ is $(C/d)$-Lipschitz with $C=C(c_\nu)$. 
By the second inequality in  \eqref{e.additional}
we also have that  $Q\subset B$.
Fix a point $w\in B\setminus U_\lambda$. Then 
\begin{align*}
\lvert u_{Q;\nu}-u_{B;\nu}\rvert&\le \vint_{Q}\lvert u-u_{B;\nu}\rvert\,d\nu
\le C(c_\nu)\vint_B\lvert u-u_{B;\nu}\rvert\,d\nu \\
&\le C(c_\nu)\,d\,\biggl(\frac{1}{\diam(B)^p}\vint_B\lvert u-u_{B;\nu}\rvert^p\,d\nu\biggr)^{1/p}\\
&\le C(c_\nu) dM^{\nu,p}u(w)\le C(c_\nu)\lambda d\,.
\end{align*}

By  the previous estimates and \eqref {e.w_u_l}, we can now proceed as follows: 
\begin{align*}
\lvert v_\lambda(x)-v_\lambda(y)\rvert
&=\bigg\lvert \sum_{Q\in\mathcal{W}_\lambda} u_{Q;\nu}\psi_Q(x)-\sum_{Q\in\mathcal{W}_\lambda} u_{Q;\nu}\psi_Q(y)\bigg\rvert\\
&=\bigg\lvert \sum_{Q\in\mathcal{W}_\lambda} \bigl(\psi_Q(x)-\psi_Q(y)\bigr)(u_{Q;\nu}-u_{B;\nu})\bigg\rvert\\
&\le \sum_{\substack{Q\in\mathcal{W}_\lambda\\\{x,y\}\cap Q^*\not=\emptyset}} \lvert\psi_Q(x)-\psi_Q(y)\rvert \cdot \lvert u_{Q;\nu}-u_{B;\nu}\rvert\\
&\le  C(c_\nu)\cdot \frac{C(c_\nu)}{d}d(x,y)\cdot C(c_\nu) \lambda d=C(c_\nu)\lambda d(x,y)\,.
\end{align*}
This concludes our treatment of the last case $x,y\in U_\lambda$. \end{proof}

\subsection{Modification of the extension}\label{s.modification}

In Section \ref{s.whitney_first} we constructed a $C\lambda$-Lipschitz
extension $v_\lambda\colon X\to \R$ of $u|_{X\setminus U_\lambda}$.
We still need to modify this extension in the set $U_\lambda$ 
in order to ensure that  inequality \eqref{e.curious_ineq} becomes valid.
This modification has to be done carefully, since we want
the Lipschitz constant of the modified function $u_\lambda$ to be in control.
Therefore, in some of the following arguments, we need to track
the Lipschitz constants more quantitatively.

For this purpose we introduce a new
notation $\kappa=C\lambda$, where $C=C(c_\nu)>0$ is the constant from Lemma \ref{l.v_Lip}.
In particular, we then have
\begin{equation}\label{e.kappa_Lip}
\lvert v_\lambda(x)-v_\lambda(y)\rvert\le \kappa d(x,y)\,,\qquad x,y\in X\,.
\end{equation}
Whenever such a  quantitative tracking is not necessary, we resort to the usual notation $C\lambda$.

The number $\kappa>0$ appears in the properties of the following 
bump functions.
For each $Q\in\mathcal{W}_\lambda$, we let
$b_Q\colon X\to \R$ be a function satisfying the following conditions (B1)--(B5):
\begin{itemize}
\item[(B1)] $S_Q=\{x\in X\,:\, b_Q(x)\not=0\}\subset \frac{1}{10}Q$\,;
\item[(B2)] $\nu(S_Q)\le 2^{-1}\nu(\frac{1}{10}Q)$\,;
\item[(B3)] $b_Q$ is $C(c_\nu,p)\lambda$-Lipschitz   in $X$\,;  
\item[(B4)] $\int_{Q} b_Q\,d\nu=0$\,;
\item[(B5)] $\int_Q \lvert b_Q\rvert^p\,d\nu \ge 2^p\kappa^p r_Q^p \nu(Q)$\,.
\end{itemize}

The actual construction of $b_Q$ relies on the observation that
if $B=B(x,r)\subsetneq X$ is a ball, then there
exists two disjoint balls $B_1=B(x_1,r/4)\subset B$ and $B_2=B(x_2,r/4)\subset B$.
This  follows from the fact that $X$ is a geodesic two-measure space.
Now, the above observation with $B=\frac{1}{10}Q$ gives us a ball 
$Q'\subset \frac{1}{10}Q$ of radius $r_Q/40$ such that
$\nu(Q')\le 2^{-1}\nu(\frac{1}{10}Q)$.
By applying the above observation again, but this time with $B=Q'$, we
choose 
two disjoint balls $Q'_1=B(x_1,r_Q/160)\subset Q'$ and $Q'_2=B(x_2,r_Q/160)\subset Q'$.
For each $x\in X$, we let
\[
\varphi_{Q,j}(x)=\max\{0,r_Q-160\cdot d(x,x_j)\}\,,\qquad j\in \{1,2\}\,.
\]
With the aid of $160$-Lipschitz functions $\varphi_{Q,1}$ and $\varphi_{Q,2}$, both of
which are zero in $X\setminus Q'\supset X\setminus \frac{1}{10}Q$, we then define
a $C(c_\nu)$-Lipschitz function $\varphi_Q\colon X\to \R$ by setting
\[
\varphi_Q=\varphi_{Q,1} - \left(\frac{\int_X \varphi_{Q,1}\,d\nu}{\int_X \varphi_{Q,2}\,d\nu}\right)\varphi_{Q,2}\,.
\]
Observe that $\int_Q \varphi_Q\,d\nu=\int_X \varphi_Q\,d\nu=0$.
In order to ensure that condition (B5) holds, we  need to normalize the above function $\varphi_Q$ by defining
\[
b_Q = \frac{2\kappa}{C(c_\nu,p)^{1/p}}\varphi_Q\,,
\]
where the constant $C(c_{\nu},p)>0$ is
such that inequality
 $\int_X \lvert \varphi_Q\rvert^p\,d\nu\ge C(c_\nu,p)r_Q^p\nu(Q)$ holds.
It is a tedious but straightforward task to check that the function $b_Q\colon X\to \R$ above satisfies
all of the required properties (B1)--(B5). We leave further details to the interested reader.

We now define a function
\begin{equation}\label{e.b_def}
b=\sum_{Q\in\mathcal{W}_\lambda} b_Q\colon X\to \R
\end{equation}
that will be used to modify $v_\lambda$ in the set $U_\lambda$. 
 First, we record some 
basic properties of $b$.

\begin{lemma}\label{l.b_Lip}
The function $b$ is well defined in $X$ and $b=0$ in $X\setminus U_\lambda$.
Moreover, the function $b$ is $C\lambda$-Lipschitz in $X$ with $C=C(c_\nu,p)>0$.
\end{lemma}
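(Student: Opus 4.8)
The plan is to verify the three assertions in order: well-definedness, vanishing outside $U_\lambda$, and the Lipschitz bound. For well-definedness and the identity $b = 0$ on $X\setminus U_\lambda$, I would first observe that the bump functions have disjoint supports: by (B1), $S_Q = \{b_Q \neq 0\} \subset \frac{1}{10}Q$, and the family $\{\frac{1}{10}Q : Q\in\mathcal{W}_\lambda\}$ was arranged to be pairwise disjoint. Hence for each $x\in X$ at most one term in the sum \eqref{e.b_def} is nonzero, so $b(x)$ is well defined with $|b(x)| = |b_Q(x)|$ for the unique $Q$ (if any) with $x\in\frac{1}{10}Q$. Moreover $\frac{1}{10}Q\subset Q\subset U_\lambda$ for every $Q\in\mathcal{W}_\lambda$, so if $x\notin U_\lambda$ then no term contributes and $b(x)=0$.

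For the Lipschitz estimate, fix $x,y\in X$; I want $|b(x)-b(y)|\le C\lambda\, d(x,y)$ with $C=C(c_\nu,p)$. The main point is to reduce to the case where $x$ and $y$ lie in dilations of Whitney balls of comparable size. If both $x,y\notin U_\lambda$, then $b(x)=b(y)=0$ and there is nothing to prove. If $x\in\frac{1}{10}Q_x$ and $y\in\frac{1}{10}Q_y$ with $Q_x = Q_y =: Q$, then $|b(x)-b(y)| = |b_Q(x)-b_Q(y)|\le C(c_\nu,p)\lambda\, d(x,y)$ directly from (B3). The remaining cases are: (i) $x\in\frac{1}{10}Q_x$, $y\notin U_\lambda$; (ii) $x\in\frac{1}{10}Q_x$, $y\in\frac{1}{10}Q_y$ with $Q_x\neq Q_y$; and (iii) symmetric versions. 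In case (i), since $b_{Q_x}$ vanishes outside $\frac{1}{10}Q_x$, we have $b(y)=b_{Q_x}(y)=0$, and so $|b(x)-b(y)| = |b_{Q_x}(x)-b_{Q_x}(y)|\le C(c_\nu,p)\lambda\, d(x,y)$ again by (B3). Case (ii) is the crux: here I would use that $y\notin \frac{1}{10}Q_x$ (supports disjoint) so $b_{Q_x}(y)=0$, and likewise $b_{Q_y}(x)=0$, whence
\[
|b(x)-b(y)| = |b_{Q_x}(x) - b_{Q_y}(y)| \le |b_{Q_x}(x)-b_{Q_x}(y)| + |b_{Q_y}(x)-b_{Q_y}(y)| \le C(c_\nu,p)\lambda\,d(x,y),
\]
using (B3) for each bump separately (no comparability of radii is even needed since only two bumps enter). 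Thus in every case the bound follows from (B1) (disjoint supports) together with the uniform Lipschitz bound (B3).

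I do not anticipate a serious obstacle here; this is a standard "disjointly supported uniformly Lipschitz functions glue to a Lipschitz function" argument, and the only mild subtlety is making sure that whenever two distinct bump functions are both potentially nonzero at the two points, one can split the difference as above so that only a bounded number (in fact two) of bumps appear — which is immediate from the pairwise disjointness of $\{\frac{1}{10}Q\}$. I would close by noting that $C(c_\nu,p)$ here is the constant from (B3), and absorbing it into the statement's $C=C(c_\nu,p)$.
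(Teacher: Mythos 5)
Your argument is correct and essentially the paper's: the paper bounds $\lvert b(x)-b(y)\rvert$ by $\sum_Q \lvert b_Q(x)-b_Q(y)\rvert$ over the boundedly many $Q$ (by \eqref{e.w_u_l}) meeting $\{x,y\}$ and applies (B3) to each, which subsumes your case analysis, while your use of the pairwise disjointness of $\{\tfrac{1}{10}Q\}$ to reduce to at most two bumps is a harmless sharpening. The only cosmetic point is that your case list omits a point of $U_\lambda$ lying in no $\tfrac{1}{10}Q$, but such a point contributes $b=0$ exactly as in your case (i).
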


\begin{proof}
The condition (B1) above and the second property in \eqref{e.w_u_l}
are used several times below.
First of all, they imply that $b$ is well defined in $X$.
Moreover, 
\[
 b =  \sum_{Q\in\mathcal{W}_\lambda} b_Q
 = \sum_{Q\in\mathcal{W}_\lambda} 
 \mathbf{1}_{U_\lambda}b_Q=\mathbf{1}_{U_\lambda}\sum_{Q\in\mathcal{W}_\lambda}  b_Q=\mathbf{1}_{U_\lambda} b\,.
\]
Hence $b=0$ in $X\setminus U_\lambda$.
Finally, if $x,y\in X$, then
\begin{align*}
\lvert b(x)-b(y)\rvert &= 
\bigg\lvert \sum_{Q\in\mathcal{W}_\lambda} b_Q(x)-\sum_{Q\in\mathcal{W}_\lambda} b_Q(y)\bigg\rvert\\
&\le  \sum_{\substack{Q\in\mathcal{W}_\lambda\\\{x,y\}\cap Q\not=\emptyset}} \lvert b_Q(x)- b_Q(y)\rvert
\le C(c_\nu)\cdot C(c_\nu,p)\lambda d(x,y)\,,
\end{align*}
 and so $b$ is Lipschitz in $X$ with a constant $C(c_\nu,p)\lambda>0$. 
\end{proof}

Finally, we define 
\begin{equation}\label{e.curious_ext}
u_\lambda\colon X\to \R,\ u_\lambda = v_\lambda + b\,, 
\end{equation}
where  $v_\lambda$ is defined by \eqref{e.vl_def} and $b$ is defined by \eqref{e.b_def}.
A combination of Lemma~\ref{l.v_Lip} and Lemma~\ref{l.b_Lip} shows that $u_\lambda\colon X\to \R$
is an $C(c_\nu,p)\lambda$-Lipschitz extension of $u|_{X\setminus U_\lambda}$.
It remains to show that inequality \eqref{e.curious_ineq} holds for $u_\lambda$. We
begin this task by formulating and proving the following quantitative lemma that 
 is an auxiliary result for Lemma \ref{l.monotone_extension}.

\begin{lemma}\label{l.aux}
Let 
$Q\in\mathcal{W}_\lambda$. Then 
\[
\kappa^p r_Q^p \nu(Q)\le  \int_{\frac{1}{10}Q} \lvert u_\lambda-(u_\lambda)_{\frac{1}{10}Q;\nu}\rvert^p\,d\nu\,.
\]
\end{lemma}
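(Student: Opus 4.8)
The plan is to exploit the pairwise disjointness of the bump supports so that on $\tfrac{1}{10}Q$ the modification $b$ reduces to the single bump $b_Q$, and then to read off the desired lower bound directly from property (B5), treating the Lipschitz part $v_\lambda$ as a harmless perturbation.

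First I would observe that, since $\{\tfrac{1}{10}P:P\in\mathcal{W}_\lambda\}$ is a pairwise disjoint family and each $b_P$ is supported in $\tfrac{1}{10}P$ by (B1), every $b_P$ with $P\neq Q$ vanishes on $\tfrac{1}{10}Q$; hence $b=\sum_{P\in\mathcal{W}_\lambda}b_P$ coincides with $b_Q$ on $\tfrac{1}{10}Q$, and therefore $u_\lambda=v_\lambda+b_Q$ there. Because $b_Q$ is supported in $\tfrac{1}{10}Q\subset Q$, property (B4) gives $\int_{\frac{1}{10}Q}b_Q\,d\nu=\int_Q b_Q\,d\nu=0$, so the $\nu$-average of $u_\lambda$ over $\tfrac{1}{10}Q$ equals that of $v_\lambda$; denote this common value by $a$. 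Thus on $\tfrac{1}{10}Q$ we have the decomposition $u_\lambda-a=(v_\lambda-a)+b_Q$.

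Next I would estimate the oscillation of $v_\lambda$ on $\tfrac{1}{10}Q$. By \eqref{e.kappa_Lip} the function $v_\lambda$ is $\kappa$-Lipschitz, so for each $x\in\tfrac{1}{10}Q$,
\[
\lvert v_\lambda(x)-a\rvert\le \vint_{\frac{1}{10}Q}\lvert v_\lambda(x)-v_\lambda(y)\rvert\,d\nu(y)\le \kappa\,\diam\bigl(\tfrac{1}{10}Q\bigr)\le \frac{\kappa r_Q}{5}\,,
\]
using $\diam(B(x_Q,r_Q/10))\le r_Q/5$. Hence, since $\tfrac{1}{10}Q\subset Q$,
\[
\biggl(\int_{\frac{1}{10}Q}\lvert v_\lambda-a\rvert^p\,d\nu\biggr)^{1/p}\le \frac{\kappa r_Q}{5}\,\nu(Q)^{1/p}\,.
\]
Then, applying Minkowski's inequality in $L^p(\tfrac{1}{10}Q;d\nu)$ to $b_Q=(u_\lambda-a)-(v_\lambda-a)$ and invoking (B5) (note $\int_{\frac{1}{10}Q}\lvert b_Q\rvert^p\,d\nu=\int_Q\lvert b_Q\rvert^p\,d\nu\ge 2^p\kappa^p r_Q^p\nu(Q)$),
\[
\biggl(\int_{\frac{1}{10}Q}\lvert u_\lambda-(u_\lambda)_{\frac{1}{10}Q;\nu}\rvert^p\,d\nu\biggr)^{1/p}\ge 2\kappa r_Q\,\nu(Q)^{1/p}-\frac{\kappa r_Q}{5}\,\nu(Q)^{1/p}\ge \kappa r_Q\,\nu(Q)^{1/p}\,,
\]
and raising to the $p$-th power gives the assertion. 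There is no genuine obstacle here; the only point to watch is the bookkeeping of the $\kappa$-Lipschitz constant of $v_\lambda$, and the normalization factor $2$ built into (B5) was chosen precisely so that it dominates the oscillation $\kappa r_Q/5$ of $v_\lambda$, leaving a net constant that is still $\ge 1$.
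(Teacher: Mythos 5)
Your proof is correct and follows essentially the same route as the paper's: identify $u_\lambda=v_\lambda+b_Q$ on $\tfrac{1}{10}Q$ via (B1) and disjointness, use (B4) to match the averages, bound the oscillation of $v_\lambda$ by its $\kappa$-Lipschitz constant, and apply the reverse triangle inequality in $L^p$ together with (B5). The only (harmless) difference is that you track the slightly sharper oscillation bound $\kappa r_Q/5$ where the paper is content with $\kappa r_Q$.
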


\begin{proof}
  We remark that quantitative tracking of Lipschitz and other constants is needed below.  
Fix a Whitney ball $Q\in\mathcal{W}_\lambda$.
By inequality \eqref{e.kappa_Lip},
\begin{equation}\label{e.error}
\begin{split}
\biggl(\int_{\frac{1}{10}Q} \lvert v_\lambda -(v_\lambda)_{\frac{1}{10}Q;\nu}\rvert^p\,d\nu\biggr)^{1/p} 
& \le \biggl(\int_{\frac{1}{10}Q}\vint_{\frac{1}{10}Q}\lvert v_\lambda(x)-v_\lambda(y)\rvert^p\,d\nu(y)\,d\nu(x)\biggr)^{1/p} \\
& \le \kappa r_Q\nu(Q)^{1/p}\,.
\end{split}
\end{equation}
 From estimate \eqref{e.error} and conditions (B1) and (B5) it then follows that
\begin{align*}
\kappa r_Q\nu(Q)^{1/p}&=2\kappa r_Q \nu(Q)^{1/p}-\kappa r_Q\nu(Q)^{1/p}
\\&\le \biggl(\int_{\frac{1}{10}Q} \lvert b_Q\rvert^p\,d\nu\biggr)^{1/p} - \biggl(\int_{\frac{1}{10}Q} \lvert v_\lambda -(v_\lambda)_{\frac{1}{10}Q;\nu}\rvert^p\,d\nu\biggr)^{1/p}\\
&\le \biggl(\int_{\frac{1}{10}Q} \lvert b_Q+v_\lambda -(v_\lambda)_{\frac{1}{10}Q;\nu}\rvert^p\,d\nu\biggr)^{1/p}\,.
\end{align*}
Recall that the family $\{\frac{1}{10} P\,:\, P\in\mathcal{W}_\lambda\}$ is pairwise
disjoint. Hence, by condition (B1), we find that
$u_\lambda=v_\lambda+b_Q$ in $\frac{1}{10}Q$. This fact together with (B1) and (B4) implies that
\[
\mathbf{1}_{\frac{1}{10}Q}(b_Q+v_\lambda -(v_\lambda)_{\frac{1}{10}Q;\nu})=
\mathbf{1}_{\frac{1}{10}Q}(u_\lambda-(u_\lambda)_{\frac{1}{10}Q;\nu})\,.
\]
By concluding from above, we obtain
\[
\kappa r_Q\nu(Q)^{1/p}\le \biggl(\int_{\frac{1}{10}Q} \lvert u_\lambda-(u_\lambda)_{\frac{1}{10}Q;\nu}\rvert^p\,d\nu\biggr)^{1/p}\,,
\]
 and the claim follows by raising 
both sides to power $p$. 
\end{proof}

The following lemma ensures that inequality \eqref{e.curious_ineq} holds for 
the modified extension $u_\lambda$.

\begin{lemma}\label{l.monotone_extension}
There is a constant $C=C(c_\nu,p)>0$ such that 
\[M^{\nu,p} u(x)\le CM^{\nu,p}u_\lambda(x)\] for all $x\in X\setminus U_\lambda$. 
\end{lemma}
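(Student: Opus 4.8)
The plan is to fix $x\in X\setminus U_\lambda$ and an arbitrary ball $B=B(z,\rho)$ with $x\in B$, and to establish
\[
\Lambda(B):=\biggl(\frac{1}{\diam(B)^{p}}\vint_{B}\lvert u-u_{B;\nu}\rvert^{p}\,d\nu\biggr)^{1/p}\le CM^{\nu,p}u(x)\cdot 0+CM^{\nu,p}u_\lambda(x),
\]
i.e.\ $\Lambda(B)\le CM^{\nu,p}u_\lambda(x)$ with $C=C(c_\nu,p)$; taking the supremum over such $B$ then gives the lemma. Two preliminary facts are recorded first. Since $x\notin U_\lambda=\{M^{\nu,p}u>\lambda\}$, every admissible $B$ already satisfies $\Lambda(B)\le M^{\nu,p}u(x)\le\lambda$, and this upper bound is used throughout; moreover $M^{\nu,p}u$ is lower semicontinuous (a supremum of the lower semicontinuous functions $y\mapsto\mathbf 1_{B'}(y)\Lambda(B')$ over balls $B'$), so $M^{\nu,p}u(x)=\lambda$ whenever $x\in\overline{U_\lambda}$. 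Second, for each Whitney ball $Q\in\mathcal W_\lambda$ the dilate $\bar Q:=B(x_Q,300r_Q)$ meets $X\setminus U_\lambda$, because $\dist(x_Q,X\setminus U_\lambda)=128r_Q$; applying $\Lambda(\,\cdot\,)\le M^{\nu,p}u\le\lambda$ at a point of $\bar Q\setminus U_\lambda$, together with \eqref{e.msharp}, the doubling property, the $C\lambda$-Lipschitz continuity of $v_\lambda$ (Lemma \ref{l.v_Lip}), and the properties (B1)--(B3) with the disjointness of $\{\tfrac1{10}P\}$, yields the estimates
\[
\vint_{Q}\lvert u-u_{Q;\nu}\rvert^{p}\,d\nu\le C(c_\nu)\lambda^{p}r_Q^{p}\qquad\text{and}\qquad\vint_{Q}\lvert u-u_\lambda\rvert^{p}\,d\nu\le C(c_\nu,p)\lambda^{p}r_Q^{p}.
\]

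If $B\cap U_\lambda=\emptyset$, then $u_\lambda=u$ on $B$ (since $v_\lambda=u$ off $U_\lambda$ and $b$ vanishes there by Lemma \ref{l.b_Lip}), so $\Lambda(B)\le M^{\nu,p}u_\lambda(x)$. Assume therefore $B\cap U_\lambda\ne\emptyset$ and pick a Whitney ball $Q_0\in\mathcal W_\lambda$ meeting $B$ with $r_{Q_0}\ge\tfrac12R$, where $R:=\sup\{r_Q:Q\in\mathcal W_\lambda,\ Q\cap B\ne\emptyset\}$. A short distance estimate (using $x\notin U_\lambda$) gives $r_{Q_0}\le R\le 2\rho/127$ and shows that $\widetilde B:=B(x,5\rho)$ contains both $B$ and $\tfrac1{10}Q_0$, has $x\in\widetilde B$, and satisfies $\diam(\widetilde B)\le C\diam(B)$ and $\nu(\widetilde B)\le C(c_\nu)\nu(B)$. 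Testing $M^{\nu,p}u_\lambda$ at $x$ against $\widetilde B$, restricting the integral to the disjoint piece $\tfrac1{10}Q_0\subset\widetilde B$, and applying Lemma \ref{l.aux}, we obtain
\[
M^{\nu,p}u_\lambda(x)^{p}\ \ge\ \frac{2^{-p}}{\diam(\widetilde B)^{p}\nu(\widetilde B)}\int_{\frac1{10}Q_0}\lvert u_\lambda-(u_\lambda)_{\frac1{10}Q_0;\nu}\rvert^{p}\,d\nu\ \ge\ \frac{2^{-p}\kappa^{p}r_{Q_0}^{p}\nu(Q_0)}{\diam(\widetilde B)^{p}\nu(\widetilde B)}.
\]

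Now fix a large constant $N=N(c_\nu,p)$. If $R\ge\rho/N$, then $r_{Q_0}$ is comparable to $\rho$ and hence to $\diam(\widetilde B)$, so by doubling $\nu(Q_0)\ge c(N,c_\nu)\nu(\widetilde B)$; since $\kappa=C(c_\nu)\lambda\ge\lambda$ the displayed inequality forces $M^{\nu,p}u_\lambda(x)\ge c(N,c_\nu,p)\lambda\ge c(N,c_\nu,p)\Lambda(B)$. If $R<\rho/N$, then every Whitney ball meeting $B$ has radius $<\rho/N$, so by the auxiliary estimate above and the bounded overlap in \eqref{e.w_u_l},
\[
\Bigl(\vint_{B}\lvert u-u_\lambda\rvert^{p}\,d\nu\Bigr)^{1/p}\le C\lambda R\le C\lambda\rho/N,
\]
and the triangle inequality in $L^{p}(B;\nu)$ gives $\Lambda(B)\le M^{\nu,p}u_\lambda(x)+C\lambda/N$ (using $\diam(B)\ge\rho$, which holds whenever $B\ne X$; the degenerate case $B=X$, possible only if $X$ is bounded, is handled directly as in Section \ref{s.main}). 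Taking the supremum over all $B\ni x$ and combining the cases yields $M^{\nu,p}u(x)\le C_1M^{\nu,p}u_\lambda(x)+C\lambda/N$; in particular, if $M^{\nu,p}u(x)>C\lambda/N$ we already conclude $M^{\nu,p}u(x)\le C_1M^{\nu,p}u_\lambda(x)$.

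The main obstacle is removing the additive term $C\lambda/N$ in the borderline regime $M^{\nu,p}u(x)\le C\lambda/N$, which corresponds exactly to the case $R<\rho/N$ — $B$ straddles $\partial U_\lambda$ while the nearby Whitney balls are much smaller than $B$, so $u_\lambda$ is uniformly close to $u$ on $B$ but the available bumps $b_Q$ are, by themselves, too small to produce a lower bound for $M^{\nu,p}u_\lambda(x)$ of order $\lambda$. Here I would exploit the lower semicontinuity noted above: for $x\in\overline{U_\lambda}$ one has $M^{\nu,p}u(x)=\lambda$, so $C\lambda/N=CM^{\nu,p}u(x)/N$ can be absorbed after choosing $N>2C$, while for $x$ in the open set $X\setminus\overline{U_\lambda}$ one has $u_\lambda=u$ on a ball $B(x,\dist(x,U_\lambda))$ and a finer split of the admissible balls (those contained in this ball, for which $\Lambda(B)\le M^{\nu,p}u_\lambda(x)$, versus the rest, which are reduced to the analysis above) closes the argument. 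Carrying this out requires careful bookkeeping of how each constant depends on $c_\nu$ and $p$ through the doubling iterations used to compare $\nu(Q_0)$ with $\nu(\widetilde B)$; the remaining ingredients — the case $B\cap U_\lambda=\emptyset$, the large-Whitney-ball case, the two auxiliary estimates, and the elementary properties of $v_\lambda$, $b$, $u_\lambda$ — are routine.
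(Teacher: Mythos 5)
Your case analysis correctly handles $B\cap U_\lambda=\emptyset$ and the case where some Whitney ball meeting $B$ has radius comparable to that of $B$, but the borderline regime that you yourself flag as ``the main obstacle'' is a genuine gap, and the two devices you propose for closing it do not work. First, lower semicontinuity of $M^{\nu,p}u$ gives $\liminf_{y\to x}M^{\nu,p}u(y)\ge M^{\nu,p}u(x)$, which is the \emph{wrong} inequality for concluding $M^{\nu,p}u(x)=\lambda$ on $\overline{U_\lambda}$: a lower semicontinuous function can jump \emph{down} at a boundary point of its superlevel set (compare $\mathbf{1}_{(0,1)}$ at $0$), so a boundary point of $U_\lambda$ may well have $M^{\nu,p}u(x)<\lambda$ strictly, and the additive term $C\lambda/N$ then cannot be absorbed into $M^{\nu,p}u(x)$. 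Second, for $x\notin\overline{U_\lambda}$ the ``finer split'' is only asserted: for a ball $B\ni x$ that meets $U_\lambda$ but whose intersecting Whitney balls are all of radius $<\rho/N$, your lower bound for $M^{\nu,p}u_\lambda(x)$ coming from Lemma~\ref{l.aux} applied to the single ball $Q_0$ is of order $\kappa\, r_{Q_0}\,(\nu(Q_0)/\nu(\widetilde B))^{1/p}/\rho$, which can be arbitrarily small compared with $\lambda$; so in the regime $M^{\nu,p}u(x)\lesssim\lambda/N$ you have neither a multiplicative bound nor a way to dominate the additive error, and the conclusion $M^{\nu,p}u(x)\le C\,M^{\nu,p}u_\lambda(x)$ is simply not established there.

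The missing idea is that the bumps must be exploited on \emph{every} Whitney ball meeting $B$, not only on the largest one, and that the comparison should be carried out at the level of $p$-th power integrals rather than of the quantities $\Lambda(B)$ (which forces an additive error). Concretely: for each $Q\in\mathcal{W}_\lambda$ with $Q\cap B\neq\emptyset$ one shows that all the local error terms (the oscillation of $u$ on $Q$, and the discrepancies between $u_{Q;\nu}$, $(u_\lambda)_{R_Q;\nu}$ and $(u_\lambda)_{2B;\nu}$) are bounded by $C(c_\nu,p)\lambda^p r_Q^p\nu(Q)$, and then Lemma~\ref{l.aux} converts this \emph{local} bound into $C\int_{\frac{1}{10}Q}\lvert u_\lambda-(u_\lambda)_{2B;\nu}\rvert^p\,d\nu$. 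Summing over $Q$ using the pairwise disjointness of $\{\tfrac{1}{10}Q\}$ and the inclusion $Q\subset 2B$ (valid because $B\setminus U_\lambda\neq\emptyset$) yields
\[
\int_{B}\lvert u-u_{B;\nu}\rvert^p\,d\nu\le C(c_\nu,p)\int_{2B}\lvert u_\lambda-(u_\lambda)_{2B;\nu}\rvert^p\,d\nu
\]
for every admissible $B$, with no case distinction on the size of the Whitney balls and no additive remainder; the lemma then follows from the doubling property and \eqref{e.diams}. This per-Whitney-ball absorption is precisely what the calibration (B5) of the bumps is designed for, and it is the step your argument replaces with the too-weak $L^p$-closeness estimate $\bigl(\vint_B\lvert u-u_\lambda\rvert^p\,d\nu\bigr)^{1/p}\le C\lambda R$.
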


\begin{proof}
Fix a  ball $B\subset X$
such that $B\setminus U_\lambda\not=\emptyset$. 
By inequalities \eqref{e.doubling} and \eqref{e.diams}, it suffices to prove that
\[
\int_B \lvert u-u_{B;\nu}\rvert^p\,d\nu\le  C(c_\nu,p) \int_{2B}\lvert u_\lambda - (u_\lambda)_{2B;\nu}\rvert^p\,d\nu\,.
\]
Since $\int_B\lvert u-u_{B;\nu}\rvert^p\,d\nu\le 2^{p}\int_B \lvert u-(u_\lambda)_{2B;\nu}\rvert^p\,d\nu$,
it is enough to show that
\begin{equation}\label{e.suffices}
\int_B \lvert u-(u_\lambda)_{2B;\nu}\rvert^p\,d\nu\le  C(c_\nu,p) \int_{2B} \lvert u_\lambda-(u_\lambda)_{2B;\nu}\rvert^p\,d\nu\,.
\end{equation}
  To this end, we first observe that
$u=v_\lambda=u_\lambda$ in $X\setminus U_\lambda$. Hence, 
\begin{equation}\label{e.account}
\int_{B}\lvert u-(u_\lambda)_{2B;\nu}\rvert^p\,d\nu
= \int_{B\setminus U_\lambda}\lvert u_\lambda-(u_\lambda)_{2B;\nu}\rvert^p\,d\nu+\int_{B\cap U_\lambda}\lvert u-(u_\lambda)_{2B;\nu}\rvert^p\,d\nu\,, 
\end{equation}
and therefore it suffices to estimate the integral over the set $B\cap U_\lambda$. 

If $Q\in\mathcal{W}_\lambda$, 
we denote $S_Q=\{x\in X\,:\, b_Q(x)\not=0\}$ and $R_Q=\frac{1}{10}Q\setminus S_Q$; recall (B1). 
Since the Whitney balls in $\mathcal{W}_\lambda$ cover the open set $U_\lambda$, we can estimate  
\begin{equation}\label{e.first_est}
\begin{split}
&\int_{B\cap U_\lambda} \lvert u-(u_\lambda)_{2B;\nu}\rvert^p\,d\nu
\le \sum_{\substack{Q\in\mathcal{W}_\lambda\\B\cap Q\not=\emptyset}}\int_Q \lvert u-(u_\lambda)_{2B;\nu}\rvert^p\,d\nu
\\&\le C(p)\sum_{\substack{Q\in\mathcal{W}_\lambda\\B\cap Q\not=\emptyset}}\left(\int_Q \lvert u-u_{Q;\nu}\rvert^p\,d\nu
+\nu(Q)\lvert u_{Q;\nu}-(u_\lambda)_{R_Q;\nu}\rvert^p+\nu(Q)\lvert (u_\lambda)_{R_Q;\nu}-(u_\lambda)_{2B;\nu}\rvert^p\right)\,.
\end{split}
\end{equation}
Fix $Q\in\mathcal{W}_\lambda$ such that $B\cap 
Q\not=\emptyset$. We claim that
\begin{equation}\label{e.tast}
\begin{split}
&\int_Q \lvert u-u_{Q;\nu}\rvert^p\,d\nu
+\nu(Q)\lvert u_{Q;\nu}-(u_\lambda)_{R_Q;\nu}\rvert^p+\nu(Q)\lvert (u_\lambda)_{R_Q;\nu}-(u_\lambda)_{2B;\nu}\rvert^p\\&\qquad \le C(c_\nu,p)\int_{\frac{1}{10}Q} \lvert u_\lambda-(u_\lambda)_{2B;\nu}\rvert^p\,d\nu\,.
\end{split}
\end{equation}

The last term on the left-hand side of \eqref{e.tast} can be estimated 
directly with condition (B2) as follows:  
\begin{align*}
\nu(Q)\lvert (u_\lambda)_{R_Q;\nu}-(u_\lambda)_{2B;\nu}\rvert^p
&\le \frac{\nu(Q)}{\nu(R_Q)}\int_{R_Q}\lvert u_\lambda-(u_\lambda)_{2B;\nu}\rvert^p\,d\nu \\
&\le C(c_\nu)\int_{\frac{1}{10}Q} \lvert u_\lambda-(u_\lambda)_{2B;\nu}\rvert^p\,d\nu\,.
\end{align*}

In the estimates for the other two terms on the left-hand side of \eqref{e.tast} 
we often rely on Lemma \ref{l.aux},
by which it suffices  to establish an upper bound of the form $C(c_\nu,p)\lambda^p r_Q^p\nu(Q)$;
recall that $\kappa=C(c_\nu)\lambda$. 
Indeed, the right-hand side of the estimate in Lemma \ref{l.aux} is bounded
from above by $2^p\int_{\frac{1}{10}Q} \lvert u_\lambda-(u_\lambda)_{2B;\nu}\rvert^p\,d\nu$.

As $Q\subset B(y_Q,256r_Q)$ for some $y_Q\in X\setminus 
U_\lambda$, we easily obtain the  desired upper bound for the integral term on the left-hand side of \eqref{e.tast}: 
\begin{align*}
\int_Q \lvert u-u_{Q;\nu}\rvert^p\,d\nu&\le 
2^{p} \int_Q\lvert u-u_{B(y_Q,256r_Q);\nu}\rvert^p\,d\nu\\
&\le 2^p
\int_{B(y_Q,256r_Q)} \lvert u-u_{B(y_Q,256r_Q);\nu}\rvert^p\,d\nu
\\&\le 
2^pM^{\nu,p}u(y_Q)^p(512r_Q)^p\nu(B(y_Q,256r_Q))
\le C(c_\nu,p)\lambda^p r_Q^p\nu(Q)\,.
\end{align*}

We still need to estimate the middle term on the left-hand side of \eqref{e.tast}.
In the sequel we will assume that $1<p<\infty$. When $p=1$ the arguments need trivial modifications that are omitted here.  
We observe   that $u_\lambda=v_\lambda$ in $R_Q=\frac{1}{10}Q\setminus S_Q$; cf.\ the proof of Lemma \ref{l.aux}.
Therefore if $p'=p/(p-1)$,   then by 
H\"older's inequality we obtain that  
\begin{align*}
\nu(Q)\lvert u_{Q;\nu}-(u_\lambda)_{R_Q;\nu}\rvert^p
&=\nu(Q)\lvert (v_\lambda)_{R_Q;\nu}-u_{Q;\nu}\rvert^p\\
&= \nu(Q)\biggl\lvert \vint_{R_Q} (v_\lambda(y)-u_{Q;\nu})\,d\nu(y)\biggr\rvert^p\\
&= \nu(Q)\biggl\lvert \vint_{R_Q}\sum_{\substack{P\in\mathcal{W}_\lambda\\y\in P^*}}
\psi_P(y)(u_{P;\nu}-u_{Q;\nu})\,d\nu(y)\biggr\rvert^p \\
&\le  \vint_{R_Q}\Bigg(\sum_{\substack{P\in\mathcal{W}_\lambda\\y\in P^*}}
\psi_P(y)^{p'}\Bigg)^{p/p'}\Bigg(\sum_{\substack{P\in\mathcal{W}_\lambda\\y\in P^*}}\nu(Q)\lvert u_{P;\nu}-u_{Q;\nu}\rvert^p\Bigg)\,d\nu(y)\,.
\end{align*}
Fix $y\in R_Q\subset Q\subset U_\lambda$. By the inequality in \eqref{e.w_u_l}, we have
\[
\Bigg(\sum_{\substack{P\in\mathcal{W}_\lambda\\y\in P^*}}
\psi_P(y)^{p'}\Bigg)^{p/p'}\le 
\Bigg(\sum_{\substack{P\in\mathcal{W}_\lambda}}
\mathbf{1}_{P^*}(y)\Bigg)^{p/p'}\le 
C(c_\nu,p)\,.
\]
Fix also $P\in\mathcal{W}_\lambda$ 
such that $y\in P^*$. Then we have $r_P/2\le r_Q\le 2r_P$ and
$P\cup Q\subset B(y_Q,270r_Q)$ for some $y_Q\in X\setminus U_\lambda$. Thus,
\begin{align*}
\nu(Q)\lvert u_{P;\nu}-u_{Q;\nu}\rvert^p
&\le C(p)\nu(Q)\bigl(\lvert u_{P;\nu}-u_{B(y_Q,270r_Q);\nu}\rvert^p+
\lvert u_{Q;\nu}-u_{B(y_Q,270r_Q);\nu}\rvert^p\bigr)\\
&\le C(c_\nu,p)\int_{B(y_Q,270r_Q)}
\lvert u-u_{B(y_Q,270r_Q);\nu}\rvert^p\,d\nu\\
&\le C(c_\nu,p)M^{\nu,p}u(y_Q)^p(540r_Q)^p\nu(B(y_Q,270r_Q))\\
&  \le C(c_\nu,p)\lambda^pr_Q^p\nu(Q)\,. 
\end{align*}
Since there are at most $C(c_\nu)$ cubes
$P\in\mathcal{W}_\lambda$ such that $y\in P^*$, we  conclude  that
\[
\sum_{\substack{P\in\mathcal{W}_\lambda\\y\in P^*}}\nu(Q)\lvert u_{P;\nu}-u_{Q;\nu}\rvert^p
\le C(c_\nu,p)\lambda^pr_Q^p\nu(Q)\,.
\]
Collecting the estimates above yields
\[
\nu(Q)\lvert u_{Q;\nu}-(u_\lambda)_{R_Q;\nu}\rvert^p
\le C(c_\nu,p)\lambda^pr_Q^p\nu(Q)\,,
\]
and this concludes  the proof of inequality \eqref{e.tast}.

We are now ready to finish the proof of the lemma. 
Recall that $B\setminus U_\lambda\not=\emptyset$.
Therefore $Q\subset 2B$ if $Q\in\mathcal{W}_\lambda$
intersects $B$.
Hence, by \eqref{e.first_est} and \eqref{e.tast},
\begin{align*}
\int_{B\cap U_\lambda} \lvert u-(u_\lambda)_{2B;\nu}\rvert^p\,d\nu
& \le C(c_\nu,p)\sum_{\substack{Q\in\mathcal{W}_\lambda\\B\cap Q\not=\emptyset}}\int_{\frac{1}{10}Q} \lvert u_\lambda-(u_\lambda)_{2B;\nu}\rvert^p\,d\nu\\
& \le C(c_\nu,p)\int_{2B} \lvert u_\lambda-(u_\lambda)_{2B;\nu}\rvert^p\,d\nu\,,
\end{align*}
where we also used the fact that 
$\{\frac{1}{10}Q\,:\,Q\in\mathcal{W}_\lambda\}$ is a pairwise disjoint family.
Finally, by taking also into account  inequality~\eqref{e.account}, we have shown that inequality \eqref{e.suffices}
holds.  
\end{proof}

The main result of this section now follows easily from the
above considerations. 

\begin{proof}[Proof of Theorem \ref{t.monotone_extension}]
The function $u_\lambda\colon X\to \R$ is defined by \eqref{e.curious_ext}.
By applying Lemma \ref{l.v_Lip} and Lemma \ref{l.b_Lip} we find that $u|_{X\setminus U_\lambda}=u_\lambda|_{X\setminus U_\lambda}$ and that $u_\lambda$
is $C\lambda$-Lipschitz, with $C=C(c_\nu,p)$. Finally, by Lemma \ref{l.monotone_extension},
we see that
$M^{\nu,p} u(x)\le C(c_\nu,p)M^{\nu,p}u_\lambda(x)$ whenever $x\in X\setminus U_\lambda$.
\end{proof}

\section{Self-improvement of global maximal  Poincar\'e  inequalities}\label{s.global}

Let $1<p<\infty$ and 
let $X=(X,d,\nu,\mu)$ be a 
 geodesic 
two-measure space.
Recall that 
$M^{\nu,p}u=M^{\nu,p}_{\mathcal{X}}u$ is the global maximal function that is 
defined with respect to the family $\mathcal{X}=\{B\subset X\,:\, B\text{ is a ball}\}$
of all balls in $X$. 

\begin{theorem}\label{t.global_improvement}
Let $1<p<\infty$.
 Assume 
that $X=(X,d,\nu,\mu)$ is a geodesic two-measure space  and 
that there is a constant $C_1>0$ such that
inequality
\begin{equation}\label{e.M_ineq}
\int_X (M^{\nu,p} u(x))^{p}\,d\mu(x)\le C_1 \int_X g(x)^p\,d\mu(x)
\end{equation}
holds whenever $u\in \Lip(X)$ and  $g\in \mathcal{D}^p(u)$.
Then there exists $0<\eps_0=\eps_0(C_1,c_\nu,p)<p-1$ 
with the property that for all $0<\eps\le \eps_0$ there is a constant 
$C=C(C_1,c_\nu,p,\eps)>0$ such that inequality
\begin{equation}\label{e.finite}
\int_X (M^{\nu,p} u(x))^{p-\eps}\,d\mu(x)\le C\int_X g(x)^{p-\eps}\,d\mu(x)
\end{equation}
holds  whenever $u\in \Lip(X)$ has a bounded support and  $g\in \mathcal{D}^p(u)$.  
\end{theorem}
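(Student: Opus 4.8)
The plan is to establish~\eqref{e.finite} by a Calder\'on--Zygmund level set argument built on the modified extension of Theorem~\ref{t.monotone_extension}. Fix $u\in\Lip(X)$ with bounded support; since every $g\in\mathcal{D}^p(u)$ dominates the minimal $p$-weak upper gradient $g_u$ pointwise $\mu$-almost everywhere, it suffices to prove~\eqref{e.finite} for $g=g_u$. Write $h=M^{\nu,p}u$ and $U_\lambda=\{x\in X:h(x)>\lambda\}$; these sets are open, and since $u$ has bounded support the definition of $M^{\nu,p}u$ (cf.\ the telescoping behind~\eqref{e.msharp}) gives the decay $h(x)\le C\lVert u\rVert_\infty/\dist(x,\spt u)$, so each $U_\lambda$ with $\lambda>0$ is bounded with $\mu(U_\lambda)<\infty$, and $U_\lambda=\emptyset$ for $\lambda$ large. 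We may assume $X$ is unbounded (the bounded case being easier), so $\emptyset\neq U_\lambda\neq X$ for all $\lambda>0$. A preliminary point, needed below to justify an absorption, is that $A_\eps:=\int_X h^{p-\eps}\,d\mu$ is finite for all small $\eps>0$: indeed $g_u$ is bounded with bounded support, so $\int_X g_u^p\,d\mu<\infty$ and hence $A_0=\int_X h^p\,d\mu<\infty$ by~\eqref{e.M_ineq}; combining this with the decay of $h$ above and the volume growth of $\mu$ and $\nu$ (from doubling and~\eqref{e.rev_d}), a layer-cake estimate shows $A_\eps<\infty$ once $\eps$ is below a threshold determined by the data.

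The heart of the matter is a level set inequality valid for each fixed $\lambda>0$. Let $u_\lambda$ be the $C_0\lambda$-Lipschitz extension of $u|_{X\setminus U_\lambda}$ furnished by Theorem~\ref{t.monotone_extension}, with $C_0=C_0(c_\nu,p)$. By the gluing property (D3) applied with $E=U_\lambda$, the function $C_0\lambda\mathbf{1}_{U_\lambda}+g_u\mathbf{1}_{X\setminus U_\lambda}$ lies in $\mathcal{D}^p(u_\lambda)$, so~\eqref{e.M_ineq} applied to $u_\lambda$ gives
\[
\int_X(M^{\nu,p}u_\lambda)^p\,d\mu\le C_1\Bigl(C_0^p\lambda^p\mu(U_\lambda)+\int_{X\setminus U_\lambda}g_u^p\,d\mu\Bigr).
\]
For the reverse direction I would bound $\int_X(M^{\nu,p}u_\lambda)^p\,d\mu$ from below by splitting over the disjoint sets $U_\lambda$ and $X\setminus U_\lambda$. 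On $X\setminus U_\lambda$ one has $h\le C_2\,M^{\nu,p}u_\lambda$ by~\eqref{e.curious_ineq}. On $U_\lambda$, Lemma~\ref{l.aux} applied to a Whitney ball $Q$ of $U_\lambda$ shows, using the ball $\frac{1}{10}Q$ itself in the definition of the sharp maximal function, that $M^{\nu,p}u_\lambda\ge c\lambda$ on $\frac{1}{10}Q$, with $c=c(c_\nu)>0$; since the balls $\frac{1}{10}Q$ are pairwise disjoint and the Whitney balls have bounded overlap, $\bigcup_Q\frac{1}{10}Q$ has $\mu$-measure comparable to $\mu(U_\lambda)$, whence $\int_{U_\lambda}(M^{\nu,p}u_\lambda)^p\,d\mu\ge c'\lambda^p\mu(U_\lambda)$. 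Adding the two parts and comparing with the previous display yields, for every $\lambda>0$,
\[
c'\lambda^p\mu(U_\lambda)+C_2^{-p}\int_{X\setminus U_\lambda}h^p\,d\mu\le C_1C_0^p\lambda^p\mu(U_\lambda)+C_1\int_{X\setminus U_\lambda}g_u^p\,d\mu.
\]

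To conclude, multiply this inequality by $\lambda^{-1-\eps}$ and integrate over $\lambda\in(0,\infty)$. The layer-cake formula gives $\int_0^\infty\lambda^{p-1-\eps}\mu(U_\lambda)\,d\lambda=\frac{1}{p-\eps}A_\eps$, while Fubini's theorem, using $X\setminus U_\lambda=\{h\le\lambda\}$, gives $\int_0^\infty\lambda^{-1-\eps}\int_{X\setminus U_\lambda}f\,d\mu\,d\lambda=\frac{1}{\eps}\int_X f\,h^{-\eps}\,d\mu$ for $f\ge0$; hence
\[
\Bigl(\frac{c'}{p-\eps}+\frac{C_2^{-p}}{\eps}\Bigr)A_\eps\le\frac{C_1C_0^p}{p-\eps}A_\eps+\frac{C_1}{\eps}\int_X g_u^p\,h^{-\eps}\,d\mu.
\]
By the global form of Lemma~\ref{l.Lip_estimate} one has $g_u\le C_3 h$ $\mu$-almost everywhere with $C_3=C_3(c_\nu)$, so $g_u^p h^{-\eps}\le C_3^\eps g_u^{p-\eps}$. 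Now I would choose $0<\eps_0<p-1$ so small that $A_\eps<\infty$ and $C_2^{-p}/\eps\ge 2C_1C_0^p/(p-\eps)$ for all $0<\eps\le\eps_0$ — possible because the left-hand side of the latter inequality blows up as $\eps\to0^+$ while the right-hand side stays bounded, and $\eps_0$ then depends only on $C_1$, $c_\nu$, $p$. For such $\eps$ the coefficient of $A_\eps$ on the left exceeds the one on the right by at least $C_2^{-p}/(2\eps)$, and since $A_\eps<\infty$ this term can be absorbed, leaving $A_\eps\le 2C_1C_2^pC_3^{p-1}\int_X g_u^{p-\eps}\,d\mu$, which is~\eqref{e.finite}.

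I expect the main obstacle to be the lower bound $M^{\nu,p}u_\lambda\gtrsim\lambda$ on $U_\lambda$: this is exactly the role of the bump functions built into $u_\lambda$, so the argument rests essentially on Theorem~\ref{t.monotone_extension} and Lemma~\ref{l.aux} rather than on the plain McShane extension, together with the bookkeeping that reveals the self-improvement to be produced by the factor $1/\eps$ attached to the ``good'' terms after integrating the level set inequality. The finiteness of $A_\eps$ for small $\eps$ is a secondary but genuine technical point that must be settled before the absorption step is legitimate.
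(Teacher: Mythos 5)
Your overall strategy is the paper's: reduce to the minimal upper gradient, derive a level-set inequality at each height $\lambda$ by applying \eqref{e.M_ineq} to the modified extension $u_\lambda$ of Theorem~\ref{t.monotone_extension} together with the glued upper gradient $C_0\lambda\mathbf{1}_{U_\lambda}+g_u\mathbf{1}_{X\setminus U_\lambda}$ and the key estimate \eqref{e.curious_ineq}, then integrate against $\lambda^{-1-\eps}\,d\lambda$ and absorb, exploiting the $1/\eps$ versus $1/(p-\eps)$ disparity. (Two small remarks: the extra lower bound $\int_{U_\lambda}(M^{\nu,p}u_\lambda)^p\,d\mu\gtrsim\lambda^p\mu(U_\lambda)$ that you extract from Lemma~\ref{l.aux} is correct but plays no role in the absorption and is not used in the paper; and the bump functions exist to guarantee \eqref{e.curious_ineq} on $X\setminus U_\lambda$, not to make $M^{\nu,p}u_\lambda$ large on $U_\lambda$.)

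There is, however, a genuine gap exactly at the point you flag as "secondary": the absorption step requires knowing $A_\eps=\int_X(M^{\nu,p}u)^{p-\eps}\,d\mu<\infty$ \emph{before} you may subtract it from both sides, and your proposed proof of this finiteness does not work. The decay $M^{\nu,p}u(x)\lesssim \lVert u\rVert_\infty/\dist(x,\spt u)$ combined with doubling only gives $\mu(B(x_0,r))\le Cr^{s}$ with $s=\log_2 c_\mu$, and nothing forces $s<p-\eps$; already for $X=\R^n$ with Lebesgue measure and $p<n$ one has $\int_{|x|>1}|x|^{-(p-\eps)}\,dx=\infty$, so the layer-cake bound you invoke diverges. (Note that $\int_X(M^{\nu,p}u)^p\,d\mu<\infty$ is available only because \eqref{e.M_ineq} is a hypothesis, not from pointwise decay, and lowering the exponent destroys integrability at infinity in general.) The paper avoids this issue by integrating the level-set inequality over $\lambda\in(t,\infty)$ for a fixed $t>0$ rather than over $(0,\infty)$: the quantity to be absorbed then becomes $\int_X(M^{\nu,p}u)^{p}\max\{t,M^{\nu,p}u\}^{-\eps}\,d\mu$, which is manifestly finite since it is at most $t^{-\eps}\int_X(M^{\nu,p}u)^{p}\,d\mu$, and after absorption one obtains a bound uniform in $t$ and lets $t\to 0^+$ using Fatou's lemma. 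Your argument becomes complete once you replace the integration from $0$ by this truncation-and-limit device (or otherwise supply a correct a priori proof that $A_\eps<\infty$).
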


\begin{proof} 
Fix $u\in \Lip(X)$ with a bounded support. Without loss of generality, we may assume that $g\in\mathcal{D}^p(u)$ is
the minimal $p$-weak upper gradient of $u$. Then $g\in L^p(X;d\mu)$ by (D3) and the  minimality of $g$.  
We  may also assume that
$M^{\nu,p}u(x)\not=0$ for every $x\in X$; otherwise
$M^{\nu,p}u(x)=0$ for every $x\in X$ and inequality \eqref{e.finite} holds.

 Fix  $\lambda>0$ and write 
$U_\lambda=\{x\in X\,:\, M^{\nu,p}u(x)>\lambda\}$.
By  Theorem \ref{t.monotone_extension},  there is
a $C(c_\nu,{p})\lambda$-Lipschitz extension $u_\lambda\colon X\to \R$ of 
$u|_{X\setminus U_\lambda}$ such that
 $M^{\nu,p} u(x)\le C(c_\nu,{p})M^{\nu,p} u_\lambda(x)$ 
whenever $x\in X\setminus U_\lambda$; recall that instead of the particular extension constructed in 
Section~\ref{s.Whitney_ext} we could use here any other 
$C(c_\nu,{p})\lambda$-Lipschitz extension satisfying the above maximal function estimate.
By condition (D3) in Section \ref{s.two_measure_PI}, we have
\[
g_\lambda =  C(c_\nu,{p})\lambda\mathbf{1}_{U_\lambda} +  g\mathbf{1}_{X\setminus U_\lambda}\in\mathcal{D}^p(u_\lambda)\,.
\]
Hence, it follows that
\begin{equation}\label{e.st}
\begin{split}
\int_{X\setminus U_\lambda} (M^{\nu,p} u(x))^p\,d\mu(x)&\le C(c_\nu,{p})^p\int_{X\setminus U_\lambda} (M^{\nu,p} u_\lambda(x))^p\,d\mu(x)\\
&\le C(c_\nu,{p})^p\int_X (M^{\nu,p} u_\lambda(x))^p\,d\mu(x)
\\&\le C(c_\nu,{p})^pC_1 \int_X g_\lambda(x)^p\,d\mu(x)\\
&\le C_2\lambda^p \mu(U_\lambda)
+ C_2\int_{X\setminus U_\lambda} g(x)^p\,d\mu(x)\,,
\end{split}
\end{equation}
where $C_2=C(C_1,c_{\nu},p)>0$.

At this stage, we consider any $0<\eps_0< p-1$, whose
value is to be fixed at the end of the proof, 
and we fix $0<\eps\le \eps_0$ and a number $t>0$. 
Multiplying both sides of \eqref{e.st} by $\lambda^{-1-\eps}$ 
and then integrating the resulting inequality from $t$ to $\infty$  
yields 
\begin{equation}\label{e.t_int}
\begin{split}
\int_t^\infty\lambda^{-1-\eps}&\int_{X\setminus U_\lambda} (M^{\nu,p} u(x))^p\,d\mu(x)\,d\lambda
\\&\le C_2
\int_t^\infty\lambda^{p-1-\eps} \mu(U_\lambda) \,d\lambda
+C_2\int_t^\infty \lambda^{-1-\eps}\int_{X\setminus U_\lambda} g(x)^p\,d\mu(x)\,d\lambda\,.
\end{split}
\end{equation}
By Fubini's theorem and integration,
\begin{align*}
\int_t^\infty\lambda^{-1-\eps}\int_{X\setminus U_\lambda} (M^{\nu,p} u(x))^p\,d\mu(x)\,d\lambda
&=\int_X (M^{\nu,p} u(x))^p \int_t^\infty  \mathbf{1}_{X\setminus U_\lambda}(x)\lambda^{-1-\eps}\,d\lambda \,d\mu(x)\\
&=\int_X (M^{\nu,p} u(x))^p \int_{\max\{t,M^{\nu,p}u(x)\}}^\infty  \lambda^{-1-\eps}\,d\lambda \,d\mu(x)
\\
&=\frac{1}{\eps}\int_X (M^{\nu,p} u(x))^{p}\max\{t,M^{\nu,p}u(x)\}^{-\eps}\,d\mu(x)\,.
\end{align*}
In a similar way, we obtain that
\begin{align*}
C_2
\int_t^\infty\lambda^{p-1-\eps} \mu(U_\lambda) \,d\lambda
&=C_2\int_X\int_t^\infty \mathbf{1}_{U_\lambda}(x)\lambda^{p-1-\eps}\,d\lambda\,d\mu(x)\\
&\le C_2\int_{\{x\in X\,:\,t<M^{\nu,p}u(x)\}}\int_t^{M^{\nu,p} u(x)} \lambda^{p-1-\eps}\,d\lambda\,d\mu(x)
\\&\le \frac{C_2}{p-\eps}\int_{\{x\in X\,:\,t<M^{\nu,p}u(x)\}} (M^{\nu,p} u(x))^{p-\eps}\,d\mu(x)\\
&\le \frac{C_2}{p-\eps}\int_X (M^{\nu,p} u(x))^{p}\max\{t,M^{\nu,p}u(x)\}^{-\eps}\,d\mu(x)\,.
\end{align*}
Finally, a similar argument and the global analogue of Lemma \ref{l.Lip_estimate} yield
\begin{align*}
C_2\int_t^\infty \lambda^{-1-\eps}\int_{X\setminus U_\lambda}g(x)^p\,d\mu(x)\,d\lambda&=
\frac{C_2}{\eps} \int_{X} g(x)^p \max\{t,M^{\nu,p}u(x)\}^{-\eps}\,d\mu(x)\\
&\le \frac{C_2C(c_\nu,\eps)}{\eps} \int_{X} g(x)^p \max\{t,g(x)\}^{-\eps}\,d\mu(x)\,.
\end{align*}
Multiplying the obtained inequalities by $\eps>0$ gives us 
\begin{align*}
\int_X&  (M^{\nu,p} u(x))^{p}\max\{t,M^{\nu,p}u(x)\}^{-\eps}\,d\mu(x)\\
&\le \frac{\eps C_2}{p-\eps}\int_X (M^{\nu,p} u(x))^{p}\max\{t,M^{\nu,p}u(x)\}^{-\eps}\,d\mu(x)+C_2C(c_\nu,\eps)\int_{X} g(x)^{p-\eps}\,d\mu(x)\,.
\end{align*}
Now we choose  $0<\eps_0=\eps_0(C_2,p)<p-1$ to be  so small that
$\eps C_2/(p-\eps)<1/2$  for all $0<\eps\le \eps_0$. Then we fix  
$0<\eps\le \eps_0$ and absorb 
the first term on the right-hand side  above  to the left-hand side.
 This absorbed term is finite by the assumed inequality~\eqref{e.M_ineq} and the fact that $g\in L^p(X;d\mu)$. 
We mention in passing that the integration
in \eqref{e.t_int} is taken with $t>0$ as a lower bound  
in order to ensure the finiteness of the absorbed term.  

Consequently, we obtain that
\begin{align*}
\int_X&  (M^{\nu,p} u(x))^{p}\max\{t,M^{\nu,p}u(x)\}^{-\eps}\,d\mu(x)  
\le 2C_2C(c_\nu,\eps)\int_{X} g(x)^{p-\eps}\,d\mu(x)\,,
\end{align*}
and the desired inequality \eqref{e.finite} follows by taking $t\to 0_+$ and using Fatou's lemma.
\end{proof}

\bibliographystyle{abbrv}
\def\cprime{$'$} \def\cprime{$'$} \def\cprime{$'$}

\end{document}